\journal{Stochastic Processes and their Applications}
\newcommand{\R}{\mathbb{R}}
\newcommand{\N}{\mathbb{N}}
\newcommand{\F}{\mathcal{F}}
\newcommand{\D}{\mathcal{D}}
\newcommand{\B}{\mathcal{B}}
\numberwithin{equation}{section} 
\newtheorem {rem}{Remark}[section]
\newtheorem {cor}[rem]{Corollary}
\newtheorem {thm}[rem]{Theorem}
\newtheorem {lem}[rem]{Lemma}
\newtheorem {prop}[rem]{Proposition}
\newtheorem {assu}[rem]{Assumption}
\begin{document}
	
\begin{frontmatter}
	\title{Weak Convergence of Approximate reflection coupling and its Application to Non-convex Optimization}
	\author{Keisuke Suzuki}
	\address{Biometrics Research Laboratories, NEC Corporation, 1753, Shimonumabe, Nakahara-Ku,	Kawasaki, Kanagawa 211-8666, Japan}
	\ead{keisuke.suzuki.334@nec.com}
	\begin{abstract}
		In this paper, we propose a weak approximation of the reflection coupling (RC) for stochastic differential equations (SDEs), and prove it converges weakly to the desired coupling. 
		In contrast to the RC, the proposed approximate reflection coupling (ARC) need not take the hitting time of processes to the diagonal set into consideration and can be defined as the solution of some SDEs on the whole time interval.
		Therefore, ARC can work effectively against SDEs with different drift terms. 
		As an application of ARC, an evaluation on the effectiveness of the stochastic gradient descent in a non-convex setting is also described. 
		For the sample size $n$, the step size $\eta$, and the batch size $B$, we derive uniform evaluations on the time with orders $n^{-1}$, $\eta^{1/2}$, and $\sqrt{(n - B) / B (n - 1)}$, respectively.
	\end{abstract}
	\begin{keyword}
		Reflection Coupling; Stochastic Differential Equation; Gradient Descent; Non-convex Optimization
	\end{keyword}
\end{frontmatter}

\section{Introduction}

Finding a good coupling $\gamma$ between two probability measures $\mu$ and $\nu$ is important for evaluating the difference between them. 
Here, $\gamma$ is said to be a coupling between $\mu$ and $\nu$ if each marginal distributions of $\gamma$ coincide with $\mu$ and $\nu$, respectively. 
In fact, the Wasserstein distance \citep{Villani}, which measures the difference between two probability measures through good couplings of them, is bounded by the Kullback--Leibler divergence \citep{Bolley} and is one direction to connect the probability theory with the information theory. 
In particular, it is worth finding a good coupling between laws of solutions of stochastic differential equations (SDEs), which appear frequently in applications. 

For a good coupling between laws of solutions of SDEs, \citep{Torg} introduced the reflection coupling (RC). 
For a continuously differentiable function $H : \R^d \to \R$ and a $d$-dimensional Brownian motion $W$, for example, we consider Langevin dynamics along with the gradient $\nabla H$ of $H$. 
\begin{align}
	\label{LSDE_along_H}
	dX_t = - \nabla H(X_t) dt + dW_t.
\end{align}
Then, the RC for (\ref{LSDE_along_H}) is defined by
\begin{align}
	\label{Reflection_Coupling_along_H}
	dY_t 
	= - \nabla H(Y_t) dt + (I_d - 2 e_t e_t^\top) d W_t,\quad t < T,\quad Y_t = X_t,\quad t \geq T. 
\end{align}
Here, $I_d$ denotes the $d \times d$ identity matrix, $T = \inf \{ t \geq 0 \mid X_t = Y_t \}$ is the hitting time of $(X, Y)$ to the diagonal set, and $e_t = (X_t - Y_t) / \| X_t - Y_t \|_{\R^d}$. 
Thus, for each $t < T$, the orthogonal matrix $I_d - 2 e_t e_t^\top$ defines a plane symmetric transformation with respect to a plane orthogonal to $e_t$. 
Therefore, denoting the indicator function of a set $A$ by $\chi_A$, $W_t^\prime = \int_0^t (I_d - 2 \chi_{\{ s< T \}} e_s e_s^\top) dW_s$ defines the Brownian motion whose instant increments are plane symmetric with those of $W$. 
In particular, $Y$ is also a weak solution of (\ref{LSDE_along_H}) by the Markov property of $X$, and $(X, Y)$ is a process that approaches to the diagonal set by the symmetry of $W$ and $W^\prime$.
In fact, \citep{Ebe} proved the inequality $E[\rho_2(X_t, Y_t)] \leq e^{-c t} E[\rho_2(X_0, Y_0)]$ for some $c > 0$ and a function $\rho_2 : \R^d \times \R^d \to \R$ that satisfies $\| x - y \|_{\R^d} \leq C \rho_2(x, y)$ for some $C > 0$. 
Thus, the $1$-Wasserstein distance between Langevin dynamics (\ref{LSDE_along_H}) with different initial values converges to $0$ as $t$ tends to infinity. 

However, the RC does not work for Langevin dynamics with different or functional drift terms since, in this case, $Y$ is not the weak solution of the SDE it should solve because of the definition of it for $t \geq T$.  
For Langevin dynamics with different drift terms, \citep{Ebe2} proposed the sticky coupling as a substitute for the RC. 
However, the sticky coupling can evaluate only the probability that this coupling is out of the diagonal set for each fixed time. 
That is, we cannot evaluate important quantities like $1$-Wasserstein distance by the sticky coupling. 
Therefore, we can conclude that good couplings for Langevin dynamics that are more general than (\ref{LSDE_along_H}) have not been found. 

In this paper, we propose the following approximate reflection coupling (ARC) that also works for Langevin dynamics that are more general than (\ref{LSDE_along_H}). 
\begin{align}
	\label{Approximate_Reflection_Coupling_along_H}
	dY_t^{(\varepsilon)} 
	= - \nabla H(Y_t^{(\varepsilon)}) dt + (I_d - 2 h_\varepsilon(\| X_t - Y_t^{(\varepsilon)} \|_{\R^d}) e_t^{(\varepsilon)} {e_t^{(\varepsilon)}}^\top) d W_t,\quad t \geq 0.
\end{align}
Here, $e_t^{(\varepsilon)} = (X_t - Y_t^{(\varepsilon)}) / \| X_t - Y_t^{(\varepsilon)} \|_{\R^d}$ and $h_\varepsilon : \R \to [0, 1]$ is an arbitrary $C^1$-function that values $0$ in a neighborhood of the origin and $1$ outside of another neighborhood. 
(\ref{Approximate_Reflection_Coupling_along_H}) has an advantage in that it can define $Y^{(\varepsilon)}$ as the solution of the SDE defined on the whole time interval. 
In other words, in contrast to the RC, (\ref{Approximate_Reflection_Coupling_along_H}) does not need the particular definition of $Y^{(\varepsilon)}$ for $t$ after the hitting time to the diagonal set and can handle the case of different or functional drift terms. 
The first main result, Theorem \ref{Main_Theorem_Weak_Conv}, states that the ARC defined between a semi-martingale and Langevin dynamics, which is a more general case than (\ref{LSDE_along_H}) and (\ref{Approximate_Reflection_Coupling_along_H}), converges weakly to the desired coupling. 

The second main result is an application of Theorem \ref{Main_Theorem_Weak_Conv} to the theoretical analysis of stochastic gradient Langevin dynamics (SGLD) in a non-convex setting. 
As we will see later, the problem to evaluate the effectiveness of SGLD is equivalent to the problem to evaluate the difference between Langevin dynamics with different drift terms. 
Thus, we can derive a sharp evaluation of the effectiveness of SGLD by using ARC. 

This paper is organized as follows. 
In Section \ref{SEC_Main_Result}, we give accurate statements of our main results of weak convergence of ARC and evaluations for SGLD. 
Section \ref{SEC_Main1_Proof} describes the proof of the first main result, Theorem \ref{Main_Theorem_Weak_Conv}, and Sections \ref{SEC_First_Appli}, \ref{SEC_Second_Appli}, and \ref{SEC_Third_Appli} are devoted to the proofs of the three inequalities given in the second main result, Theorem \ref{Main_Theorem_SGLD}. 
Finally, auxiliary results are given in Appendix. 

\section{Main Result}
\label{SEC_Main_Result}

To formulate our first main result, we introduce the following notations. 
$\{ \F_t \}$ is a filtration that satisfies usual condition (Definition 1.2.25 in \citep{kara}) and $W$ is a $d$-dimensional $\{ \F_t \}$-Brownian motion. 
$X_0$ and $Y_0$ are $\F_0$-measurable $\R^d$-valued random variables and $V$ is a $d$-dimensional $\{ \F_t \}$-adapted continuous process with bounded variation and initial value $V_0 = 0$. 
$C([0, \infty); \R^d)$ denotes the set of all continuous functions from $[0, \infty)$ to $\R^d$ and a functional $G : [0, \infty) \times C([0, \infty); \R^d) \to \R^d$ is progressively measurable (Definition 3.5.15, \citep{kara}). 
Then, we impose the following assumption. 
Here, $L^p(\Omega; \R^d)$ is the set of all $p$-th integrable random variables from $\Omega$ to $\R^d$ and $\check{V}$ is the total variation of $V$. 

\begin{assu}
	\label{Assum_Main1}
	For some $p > 2$, $X_0, Y_0 \in L^p(\Omega; \R^d)$ and $E[\|\check{V}_t\|_{\R^d}^p] < \infty$ holds for all $t \geq 0$. 
	In addition,  there exists a constant $K(t)$ for all $t \geq 0$ such that the following inequalities hold for all $\varphi, \psi \in C([0, \infty); \R^d)$. 
	\begin{align}
		&\| G(s, \varphi) - G(s, \psi) \|_{\R^d} 
		\leq K(t) \sup_{0 \leq u \leq s} \| \varphi(u) - \psi(u) \|_{\R^d},\quad 0 \leq s \leq t, \label{Lipschitz_Functional} \\
		&\| G(s, \varphi) \|_{\R^d} 
		\leq K(t) \left( 1 + \sup_{0 \leq u \leq s} \| \varphi(u) \|_{\R^d} \right),\quad 0 \leq s \leq t. \label{Linear_Growth_Functional}
	\end{align}
\end{assu}

\noindent Under Assumption \ref{Assum_Main1}, for a constant $\sigma \neq 0$, we define the semi-martingale $X$ as 
\begin{align}
	\label{Semimartingale}
	X_t
	= X_0 + V_t + \sigma W_t,\quad t \geq 0
\end{align}
and denote the solution of the functional SDE
\begin{align}
	\label{Functional_SDE}
	Y_t 
	= Y_0 + \int_0^t G(s, Y) ds + \sigma W_t,\quad t \geq 0
\end{align}
by $Y$. 
Thus, Gronwall's lemma yields $E[\sup_{0 \leq s \leq t} \| Y_s \|_{\R^d}^p] < \infty$ for all $t \geq 0$. 

Under the aforementioned notations, for all $\varepsilon > 0$, we define the ARC between $X$ and $Y$ by
\begin{align}
	\label{General_Approximate_Reflection_Coupling}
	\begin{cases}
		dY_t^{(\varepsilon)} 
		= G(t, Y^{(\varepsilon)}) dt + \sigma (I_d - 2 h_\varepsilon( \| X_t - Y_t^{(\varepsilon)} \|_{\R^d} ) e_t^{(\varepsilon)} {e_t^{(\varepsilon)}}^\top) d W_t, \\
		Y_0^{(\varepsilon)} = Y_0. 
	\end{cases}
\end{align}
Here, $e_t^{(\varepsilon)} = (X_t - Y_t^{(\varepsilon)}) / \| X_t - Y_t^{(\varepsilon)} \|_{\R^d}$ and $h_\varepsilon : \R \to [0, 1]$ is an arbitrary $C^1$-function that satisfies
\begin{align}
	\label{CouplingDevice}
	\begin{cases}
		h_\varepsilon(a) = 0, & | a | \leq \varepsilon, \\
		h_\varepsilon(a) = 1, & | a | \geq 2 \varepsilon.
	\end{cases}
\end{align}
Our first main result is stated as follows, where $\mathcal{L}(Z)$ denotes the law of a random variable $Z$. 

\begin{thm}
	\label{Main_Theorem_Weak_Conv}
	Under Assumption \ref{Assum_Main1}, we can take a subsequence $\varepsilon_\ell$ and a coupling $\gamma$ between $\mathcal{L}(X)$ and $\mathcal{L}(Y)$ so that $\mathcal{L}(X, Y^{(\varepsilon_\ell)})$ converges weakly to $\gamma$. 
\end{thm}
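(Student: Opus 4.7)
The plan is to establish tightness of the family $\{\mathcal{L}(X, Y^{(\varepsilon)})\}_{\varepsilon > 0}$ on $C([0, \infty); \R^{2d})$, extract a weak subsequential limit $\gamma$ via Prokhorov, and then verify that $\gamma$ has $\mathcal{L}(Y)$ as its second marginal; the first marginal is automatically $\mathcal{L}(X)$, since every member of the family has that first marginal. For tightness, note that the operator norm of $I_d - 2 h_\varepsilon e_t^{(\varepsilon)} (e_t^{(\varepsilon)})^\top$ is uniformly bounded (by $3$) in $(\varepsilon, t, \omega)$; combining the linear growth bound (\ref{Linear_Growth_Functional}) with the Burkholder--Davis--Gundy inequality and Gronwall's lemma yields uniform-in-$\varepsilon$ moment bounds
\[
        \sup_{\varepsilon > 0} E\Bigl[ \sup_{s \in [0, t]} \| Y_s^{(\varepsilon)} \|_{\R^d}^p \Bigr] < \infty, \qquad t \geq 0,
\]
and Kolmogorov-type increment estimates $E[\| Y_s^{(\varepsilon)} - Y_u^{(\varepsilon)} \|_{\R^d}^p] \leq C(t) (s - u)^{p/2}$, giving tightness on $C([0, \infty); \R^{2d})$ and a subsequence $\varepsilon_\ell \downarrow 0$ with $\mathcal{L}(X, Y^{(\varepsilon_\ell)}) \to \gamma$.

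For the second marginal, I would use Skorohod's representation theorem: on some auxiliary probability space, there exist $(\tilde X^\ell, \tilde Y^\ell)$ with $\mathcal{L}(\tilde X^\ell, \tilde Y^\ell) = \mathcal{L}(X, Y^{(\varepsilon_\ell)})$, converging almost surely to some $(\tilde X^*, \tilde Y^*)$ in $C([0, \infty); \R^{2d})$. Since (\ref{General_Approximate_Reflection_Coupling}) has globally Lipschitz coefficients for each fixed $\varepsilon$ (the map $z \mapsto h_\varepsilon(\|z\|_{\R^d}) z z^\top / \|z\|_{\R^d}^2$ is Lipschitz on $\R^d$ because $h_\varepsilon$ vanishes near the origin), the SDE admits a strong solution, and one can enlarge the space to carry a Brownian motion $\tilde W^\ell$ such that
\[
        \tilde Y^\ell_t
        = \tilde Y^\ell_0 + \int_0^t G(s, \tilde Y^\ell)\, ds + \sigma \int_0^t A_s^{(\varepsilon_\ell)}\, d\tilde W^\ell_s,
\]
with $A_s^{(\varepsilon_\ell)} := I_d - 2 h_{\varepsilon_\ell}(\|\tilde X^\ell_s - \tilde Y^\ell_s\|_{\R^d})\, \tilde e_s^{(\varepsilon_\ell)} (\tilde e_s^{(\varepsilon_\ell)})^\top$. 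The drift integral converges pathwise to $\int_0^t G(s, \tilde Y^*)\, ds$ via (\ref{Lipschitz_Functional}) and dominated convergence, whence the stochastic integral converges uniformly on compacts a.s.\ to a continuous martingale $\tilde M^*_t = \tilde Y^*_t - \tilde Y^*_0 - \int_0^t G(s, \tilde Y^*)\, ds$, its martingale property inherited from the uniform $L^2$-bound $[\sigma \int A^{(\varepsilon_\ell)} d\tilde W^\ell]_t \leq C \sigma^2 t$.

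The crux of the argument is showing $[\tilde M^*]_t = \sigma^2 t I_d$, for then Lévy's characterization produces a Brownian motion $\tilde W^*$ with $\tilde M^*_t = \sigma \tilde W^*_t$, making $\tilde Y^*$ a weak solution of (\ref{Functional_SDE}) and, by strong uniqueness for (\ref{Functional_SDE}), forcing $\mathcal{L}(\tilde Y^*) = \mathcal{L}(Y)$. The quadratic variation of the approximating martingale equals $\sigma^2 t I_d + \sigma^2 \int_0^t 4 h_{\varepsilon_\ell} (h_{\varepsilon_\ell} - 1)\, \tilde e_s^{(\varepsilon_\ell)} (\tilde e_s^{(\varepsilon_\ell)})^\top\, ds$, whose residual term is supported on the thin shell $\{s : \varepsilon_\ell < \|\tilde X^\ell_s - \tilde Y^\ell_s\|_{\R^d} < 2 \varepsilon_\ell\}$. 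Killing this residual in $L^1$ as $\ell \to \infty$ is the main obstacle; the natural route is an occupation-time estimate for the scalar process $\|\tilde X^\ell - \tilde Y^\ell\|_{\R^d}$, whose martingale part (computed via Itô's formula) carries quadratic variation $4 \sigma^2 h_{\varepsilon_\ell}^2\, ds$ and is non-degenerate wherever $h_{\varepsilon_\ell} > 0$, permitting a semimartingale local-time bound on the time spent in the shell that vanishes with $\varepsilon_\ell$.
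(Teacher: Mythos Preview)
Your proposal is correct and shares the two essential ingredients with the paper's proof: the tightness argument (the paper's Lemmas~3.1--3.2, obtained exactly as you describe via Gronwall and a Kolmogorov increment bound) and the occupation-time estimate killing the $h_\varepsilon(1-h_\varepsilon)$ residual (the paper's Lemma~3.3, proved precisely via the local-time/occupation-density formula for the one-dimensional semimartingale $\|Z^{(\varepsilon)}\|_{\R^d}$ that you sketch). The only difference is in how the limiting second marginal is identified with $\mathcal{L}(Y)$. The paper bypasses Skorohod entirely and works through the martingale problem: for compactly supported $f\in C^2$, It\^o's formula makes $M^{(\varepsilon,f)}$ a martingale that equals the desired $M^f$ plus a residual involving $h_\varepsilon(1-h_\varepsilon)\,\partial^2_{ij}f$; one then passes the identity $E[M^{(\varepsilon,f)}_t F(Y^{(\varepsilon)})]=E[M^{(\varepsilon,f)}_s F(Y^{(\varepsilon)})]$ to the weak limit (the integrands have at most linear growth, controlled by the uniform $L^p$ bounds) to conclude that $\tilde Y$ solves the martingale problem for (\ref{Functional_SDE}), and weak uniqueness finishes. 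Your route via Skorohod and L\'evy's characterization is an equivalent packaging; note that the enlargement to carry a Brownian motion $\tilde W^\ell$ is actually unnecessary, since the martingale $\tilde M^\ell:=\tilde Y^\ell-\tilde Y^\ell_0-\int G(s,\tilde Y^\ell)\,ds$ and its quadratic variation $\sigma^2\int (A^{(\varepsilon_\ell)}_s)^2\,ds$ are already determined by the law of $(\tilde X^\ell,\tilde Y^\ell)$, and the limit passes through the same $E[\,\cdot\,F(\tilde Y^\ell)]$ identities one would use for $M^f$. The paper's choice of compactly supported test functions has the mild advantage that all integrands stay bounded, so no uniform-integrability bookkeeping is needed for the quadratic terms $\tilde M^\ell_i\tilde M^\ell_j$ that arise when identifying $[\tilde M^*]$ directly.
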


Next, to formulate our second main result, we introduce the following notations. 
$\mathcal{Z}$ is the set of all data points and $\ell(w; z)$ denotes the loss on $z \in \mathcal{Z}$ for a parameter $w \in \R^d$. 
$z_1, \dots, z_n$ are independent and identically distributed (IID) samples generated from the distribution $\D$ on $\mathcal{Z}$. 
For the batch size $B \leq n$, $\{ I_k \}_{k=1}^\infty$ denotes the sequence of random extraction from $\{ 1, \dots, n \}$ with size $B$. 
Finally, for each parameter $w \in \R^d$, we define the expected loss, the empirical loss, and its mini-batch by $L(w) = E_{z \sim \D}[\ell(w; z)]$, $L_n(w) = \frac{1}{n} \sum_{i=1}^n \ell(w; z_i)$ and $L_{n, k}(w) = \frac{1}{B} \sum_{i \in I_k} \ell(w; z_i)$, respectively. 
For the step size $\eta > 0$ and the inverse temperature $\beta > 0$, we define the SGLD $X^{(n, \eta, B)}$ as 
\begin{align}
	\label{SGLD_along_Ln_Batch}
	X_t^{(n, \eta, B)}
	= X_{k \eta}^{(n, \eta, B)} - (t- k \eta) \nabla L_{n, k}(X_{k \eta}^{(n, \eta, B)}) + \sqrt{2 / \beta} (W_t - W_{k \eta}), \qquad k \eta \leq t <(k+1) \eta. 
\end{align}

There are many existing works \citep{Bertsekas, Kumar, Moulines, Abbasi, Mackey2, Ge, Ge2, Qian, Majka, Kavis, Mou, Taiji, Ragi, Taiji2, Xu, Zhang, Liang} aimed at evaluating the effectiveness of SGLD. 
In almost all of them, the goal is to derive a sharp bound for fixed large $t$ to the quantity
\begin{align}
	\label{SGLD_Goal}
	E[L(X_t^{(n, \eta, B)})] - \min_{w \in \R^d} L(w) 
\end{align}
in terms of $n$, $\eta$, $B$ and $\beta$. 

Let two processes $X^{(n)}$ and $X^{(n, \eta)}$ defined by
\begin{align}
	&dX_t^{(n)} = -\nabla L_n(X_t^{(n)}) dt + \sqrt{2 / \beta} dW_t,\qquad t \geq 0, \label{LSDE_along_Ln} \\
	&X_t^{(n, \eta)} = - (t - k\eta) \nabla L_n(X_{k \eta}^{(n, \eta)}) + \sqrt{2 / \beta}  (W_t - W_{k \eta}),\qquad k \eta \leq t < (k+1) \eta \label{SGLD_along_Ln}
\end{align}
have the same initial values as $X^{(n, \eta, B)}$. 
Then, the quantity (\ref{SGLD_Goal}) can be decomposed as 
\begin{align}
	\label{SGLD_Err_Decompose}
	&E[L(X_t^{(n, \eta, B)})] - \min_{w \in \R^d} L(w) 
	= \{E[L(X_t^{(n, \eta, B)})] - E[L_n(X_t^{(n)})]\}
	+ \{ E[L_n(X_t^{(n)})] - \min_{w \in \R^d} L(w) \}.
\end{align}
According to (3.26) in \citep{Ragi} and the result in \citep{Ebe}, the second term in the R.H.S of (\ref{SGLD_Err_Decompose}) is bounded by the form of constant times $e^{- c t} + d \beta^{-1} \log(\beta / d + 1)$.
Thus, the problem to derive a bound to (\ref{SGLD_Goal}) can be reduced to the problem to evaluate the first term in the R.H.S of (\ref{SGLD_Err_Decompose}), which is the difference between Langevin dynamics with different drift terms, and ARC can be applied to evaluate it. 

To evaluate the first term in the R.H.S of (\ref{SGLD_Err_Decompose}), we impose the following assumption, which is commonly used in previous works.
Here, $C^k(\R^d; \R)$ is the set of all $C^k$-functions from $\R^d$ to $\R$. 

\begin{assu}
	\label{Assum_Main2}
	The same initial value of (\ref{SGLD_along_Ln_Batch}), (\ref{LSDE_along_Ln}) and (\ref{SGLD_along_Ln}) belongs to $L^4(\Omega; \R^d)$. 
	The loss $\ell(w; z)$ is nonnegative and satisfies $\sup_{z \in \mathcal{Z}} |\ell(0; z)| < \infty$ and $\sup_{z \in \mathcal{Z}} \|\nabla \ell(0; z)\|_{\R^d} \leq A$ for some $A > 0$.
	Thus, the expected loss $L(w) = E_{z \sim \D}[\ell(w; z)]$ is well-defined. 
	In addition, $\ell(\cdot; z) \in C^1(\R^d; \R)$ satisfies the following two conditions for all $z \in \mathcal{Z}$.
	\begin{itemize}
		\item[(1)] $(m, b)$-dissipative for some $m, b > 0$. Here, $H \in C^1(\R^d; \R)$ is said to be $(m, b)$-dissipative when the following inequality holds. 
		\begin{align}
			\label{Def_of_Dissipative}
			\langle \nabla H(x), x \rangle_{\R^d} 
			\geq m \| x \|_{\R^d}^2 - b,\qquad x \in \R^d.
		\end{align} 
		\item[(2)] $M$-smooth for some $M > 0$. Here, $H \in C^1(\R^d; \R)$ is said to be $M$-smooth when the following inequality holds. 
		\begin{align}
			\label{Def_of_M-smooth}
			\| \nabla H(x) - \nabla H(y) \|_{\R^d} 
			\leq M \| x - y \|_{\R^d},\qquad x, y \in \R^d.
		\end{align} 
	\end{itemize}
\end{assu}

As in previous works \citep{Mou, Ragi, Xu, Zhang}, we decompose the first term in the R.H.S of (\ref{SGLD_Err_Decompose}) to the sum of $E[L(X_t^{(n, \eta, B)})] - E[L(X_t^{(n, \eta)})]$, $E[L(X_t^{(n, \eta)})] - E[L(X_t^{(n)})]$ and $E[L(X_t^{(n)})] - E[L_n(X_t^{(n)})]$, and prove the following bounds. 
Here, $f = O_\alpha(g)$ means that there exists a constant $C_\alpha > 0$ depends only on $\alpha$ such that $f \leq C_\alpha g$. 

\begin{thm}
	\label{Main_Theorem_SGLD}
	Under Assumption \ref{Assum_Main2}, for some $\eta_0 = O_{m, M}(1)$, the following inequalities hold uniformly on $0 < \eta \leq \eta_0$. 
	Here, $\alpha_0 = (m, b, M, \beta, A, E[\| X_0 \|_{\R^d}^4], d)$. 
	\begin{itemize}
		\item[(1)] $| E[L(X_t^{(n)})] - E[L_n(X_t^{(n)})] | \leq O_{\alpha_0}(n^{-1})$,
		\item[(2)] $| E[L(X_t^{(n, \eta)})] - E[L(X_t^{(n)})] | \leq O_{\alpha_0}(\eta^{1/2})$,
		\item[(3)] $| E[L(X_t^{(n, \eta, B)})] - E[L(X_t^{(n, \eta)})] | \leq O_{\alpha_0}(\eta^{1/2} + \sqrt{(n - B) / B (n - 1)})$.
	\end{itemize}
\end{thm}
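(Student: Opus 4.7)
My overall strategy, for each of the three bounds, is to rewrite the difference as the expectation of a Lipschitz function of two Langevin-type processes driven by a common Brownian motion, then to introduce the ARC between these two processes via Theorem~\ref{Main_Theorem_Weak_Conv}, and finally to run an Eberle-type It\^{o} analysis on a concave modulus of the Euclidean norm to obtain a uniform-in-time bound on the expected distance between the two processes. The $M$-smoothness of $\ell$ in Assumption~\ref{Assum_Main2}, together with standard $L^p$-moment bounds on the processes, then converts this Wasserstein-type control into the stated loss gap. The advantage of using ARC rather than the classical reflection coupling is that, since $Y^{(\varepsilon)}$ is defined on the whole half-line, one may freely apply It\^{o}'s formula before passing to $\varepsilon\to 0$, which is essential because the two drifts being compared are not identical.

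For bound (1) I would use a leave-one-out argument. Let $\tilde z_1,\dots,\tilde z_n$ be independent copies of $z_1,\dots,z_n$ and let $\tilde X^{(n,i)}$ denote the solution of (\ref{LSDE_along_Ln}) with $z_i$ replaced by $\tilde z_i$, driven by the same Brownian motion and started from the same initial value. Exchangeability of the samples yields $E[L(X_t^{(n)})] - E[L_n(X_t^{(n)})] = \frac{1}{n}\sum_{i=1}^n E[\ell(\tilde X_t^{(n,i)};z_i) - \ell(X_t^{(n)};z_i)]$, so it suffices to bound $E\|\tilde X_t^{(n,i)} - X_t^{(n)}\|_{\R^d}$ uniformly in $t$ and $i$. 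The drift gap between the two Langevin dynamics is a single-sample term of size $O(1/n)$ in $L^p$, so ARC applied to this pair, combined with the Eberle contraction produced at large distances by the dissipative condition in Assumption~\ref{Assum_Main2}, yields the $n^{-1}$ rate uniformly in time.

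For bound (2) I would write the continuous-time Euler scheme $X^{(n,\eta)}$ as the solution of a functional SDE with drift $G(s,\varphi) = -\nabla L_n(\varphi(\eta\lfloor s/\eta\rfloor))$, which satisfies Assumption~\ref{Assum_Main1} by the $M$-smoothness of $L_n$, and apply Theorem~\ref{Main_Theorem_Weak_Conv} to the pair $(X^{(n)},X^{(n,\eta)})$. The effective drift gap entering the It\^{o} estimate is $\nabla L_n(X_{k\eta}^{(n,\eta)}) - \nabla L_n(X_t^{(n,\eta)})$, which by $M$-smoothness and the standard one-step moment bound on the Euler increment is of order $\sqrt{\eta}$, producing the $O(\eta^{1/2})$ contraction bound. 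Bound (3) follows the same template with the two Euler schemes coupled conditionally on the batch selections: the drift gap decomposes into a state-gap piece $\nabla L_n(X_{k\eta}^{(n,\eta)}) - \nabla L_n(X_{k\eta}^{(n,\eta,B)})$, which is absorbed by a Gronwall loop (and carries an $\eta^{1/2}$ residue analogous to (2)), and a sampling-error piece $\nabla L_n - \nabla L_{n,k}$, whose conditional second moment equals the simple-random-sample variance $(n-B)/(B(n-1))$ times the single-sample variance of $\nabla\ell(\cdot;z)$ and contributes the $\sqrt{(n-B)/(B(n-1))}$ term.

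The main obstacle I anticipate is the Eberle-type It\^{o} estimate in the presence of the \emph{approximate} reflection rather than a true one. In ARC the diffusion term of $d(X_t - Y_t^{(\varepsilon)})$ is scaled by $1 - h_\varepsilon(\|X_t - Y_t^{(\varepsilon)}\|_{\R^d})$, which is nonzero on the annulus $\{\varepsilon \le \|X_t - Y_t^{(\varepsilon)}\|_{\R^d}\le 2\varepsilon\}$, so to avoid losing the uniform-in-time contraction in the limit $\varepsilon\to 0$ one has to design the concave modulus $\rho$ so that the residual contribution of this annulus to $E[\rho(\|X_t - Y_t^{(\varepsilon)}\|_{\R^d})]$ vanishes as $\varepsilon\to 0$ while the complement still yields a genuine contraction. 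Once this estimate is secured, the weak convergence from Theorem~\ref{Main_Theorem_Weak_Conv} transfers the bound to the limiting coupling $\gamma$ and delivers the desired $W_1$-type estimate for every $t\ge 0$.
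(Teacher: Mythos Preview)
Your strategy coincides with the paper's: for each of the three bounds one forms the ARC between the relevant pair of processes, runs the Eberle It\^o estimate on the Lyapunov-weighted distance $\rho_2(x,y)=f(\|x-y\|_{\R^d})\,U(x,y)$ (Propositions~\ref{Main_Theorem_GenBound}, \ref{Main_Theorem_StepSizeBound}, \ref{Main_Theorem_BatchSizeBound}), and passes to the $\varepsilon\to0$ limit via Theorem~\ref{Main_Theorem_Weak_Conv} and Lemma~\ref{Bound_By_Rho}; your leave-one-out reduction for~(1) is exactly the uniform-stability argument the paper invokes from \cite{Hadt}.

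Two corrections to your description of the obstacle. First, the diffusion of $X_t-Y_t^{(\varepsilon)}$ is $2\sqrt{2/\beta}\,h_\varepsilon\, e_t^{(\varepsilon)}{e_t^{(\varepsilon)}}^\top dW_t$, so it is scaled by $h_\varepsilon$, not by $1-h_\varepsilon$; near the diagonal the coupling is synchronous and the difference carries no noise. Second, and more substantively, the $\varepsilon$-residuals are \emph{not} removed by the design of the modulus. Two kinds of residual appear in the estimate for $d\rho_2(X_t,Y_t^{(\varepsilon)})$: a term $M\,U(X_t,Y_t^{(\varepsilon)})\,\|Z_t^{(\varepsilon)}\|_{\R^d}\{1-h_\varepsilon(\|Z_t^{(\varepsilon)}\|_{\R^d})^2\}$, which is trivially $\le 2M\varepsilon\,U$, and a cross-variation term proportional to $\{1-h_\varepsilon\}h_\varepsilon$. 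The latter is handled by the occupation-time argument of Lemma~\ref{Vanish_Occupation_Time}: the local time of the one-dimensional semimartingale $\|Z^{(\varepsilon)}\|_{\R^d}$ is bounded uniformly in $\varepsilon$, so the expected time spent in the annulus $(\varepsilon,2\varepsilon)$ is $O(\varepsilon)$. This is the key analytic ingredient making ARC usable in the Eberle scheme and is worth isolating explicitly. Also note that since $\nabla\ell$ grows linearly, a bare bound on $E\|\tilde X_t^{(n,i)}-X_t^{(n)}\|_{\R^d}$ does not control the loss gap; the paper works with $\mathcal{W}_{\rho_2}$ throughout, and the ``state-gap'' term $M\|Z_t^{(\varepsilon)}\|_{\R^d}$ in~(3) is absorbed by the built-in $f''$-cancellation of Proposition~\ref{Main_Theorem_GenBound}, not by a separate Gronwall loop.
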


Since $t$ is large, Theorem \ref{Main_Theorem_SGLD} (2) and (3) are refinements of Corollary 2.9 in \citep{Zhang} and Theorem 3.6 in \citep{Xu}, which are the sharpest evaluation on $\eta$ and $B$ among previous works, respectively. 
On the other hand, a bound equivalent to Theorem \ref{Main_Theorem_SGLD} (1) has already been shown in Theorem 1 in \citep{Ragi}, which is the sharpest evaluation on $n$ among previous works. 
However, in this paper, we prove Theorem \ref{Main_Theorem_SGLD} (1), (2), and (3) by a single same method based ARC, while previous works \citep{Mou, Ragi, Xu, Zhang} have derived bounds for $n$, $\eta$ and $B$ in individual ways. 

\section{Proof of First Main Result}
\label{SEC_Main1_Proof}

In this section, we prove our first main result, Theorem \ref{Main_Theorem_Weak_Conv}. 
The scheme of our proof is as follows. 
First, we prove the tightness of $(X, Y^{(\varepsilon)})$ and the existence of its weak limit $(\tilde{X}, \tilde{Y})$. 
Next, we confirm that $\tilde{Y}$ solves the martingale problem corresponding to (\ref{Functional_SDE}) and the law of $\tilde{Y}$ coincides with that of $Y$. 
Therefore, the weak limit $(\tilde{X}, \tilde{Y})$ defines a coupling between $\mathcal{L}(X)$ and $\mathcal{L}(Y)$, and Theorem \ref{Main_Theorem_Weak_Conv} is proved. 

\subsection{Auxiliary lemmas}

To prove Thorem \ref{Main_Theorem_Weak_Conv}, we prepare the following three lemmas assuming Assumption \ref{Assum_Main1}. 


\begin{lem}
	\label{Lp_Bounded_Y}
	There exists the strong solution $(X, Y^{(\varepsilon)})$ of (\ref{General_Approximate_Reflection_Coupling}) uniquely and $\sup_{\varepsilon > 0} E[\sup_{0 \leq s \leq t} \| (X_s, Y_s^{(\varepsilon)}) \|_{\R^{2 d}}^p] < \infty$ holds for each fixed $t > 0$. 
\end{lem}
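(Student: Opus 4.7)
The plan is to handle the pair $(X, Y^{(\varepsilon)})$ asymmetrically. The first component $X$ is given explicitly by (\ref{Semimartingale}) in terms of $X_0$, $V$, and $W$, so there is nothing to solve for it; the bound $E[\sup_{0 \leq s \leq t} \|X_s\|_{\R^d}^p] < \infty$ follows at once from the triangle inequality, the $p$-th moment assumptions on $X_0$ and $\check V_t$ in Assumption \ref{Assum_Main1}, and Doob's inequality applied to $W$. Thus I only need to construct $Y^{(\varepsilon)}$ as the unique $\{\F_t\}$-adapted strong solution of the functional SDE in (\ref{General_Approximate_Reflection_Coupling}) driven by the already fixed processes $X$ and $W$, and then control its $p$-th moments uniformly in $\varepsilon > 0$.

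The crucial analytic input is that the diffusion coefficient
\[
(x, y) \mapsto \sigma \bigl( I_d - 2 h_\varepsilon(\|x - y\|_{\R^d})\, \hat v \hat v^\top \bigr),\qquad \hat v = (x - y)/\|x - y\|_{\R^d},
\]
is both globally bounded and globally Lipschitz in $(x, y)$. Boundedness is immediate: the matrix $I_d - 2 h_\varepsilon\, \hat v \hat v^\top$ has eigenvalues $1$ on the hyperplane $\hat v^\perp$ and $1 - 2 h_\varepsilon \in [-1, 1]$ on $\operatorname{span}(\hat v)$, so its operator norm is at most $1$ and the whole coefficient is bounded by $|\sigma|$ independently of $\varepsilon, x, y$. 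The apparent singularity at $x = y$ is removed by the cut-off $h_\varepsilon$: since $h_\varepsilon \equiv 0$ on $[-\varepsilon, \varepsilon]$, the map $z \mapsto h_\varepsilon(\|z\|_{\R^d})\, z z^\top / \|z\|_{\R^d}^2$ extends by zero across $z = 0$ as a globally $C^1$ function with bounded derivative (the derivative carries a factor $1/\|z\|_{\R^d}$ that is harmless for $\|z\|_{\R^d} \geq \varepsilon$), hence is Lipschitz with an $\varepsilon$-dependent constant. Combined with the functional Lipschitz property (\ref{Lipschitz_Functional}) of $G$, a standard Picard iteration on $C([0, t]; \R^d)$ produces the unique strong solution $Y^{(\varepsilon)}$.

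For the uniform $L^p$ estimate, I take norms in (\ref{General_Approximate_Reflection_Coupling}), raise to the $p$-th power, take the supremum over $[0, t]$, and apply expectations. Hölder's inequality to the drift integral and the Burkholder--Davis--Gundy inequality to the stochastic integral give, for some $C_p > 0$,
\begin{align*}
E\!\left[\sup_{0 \leq s \leq t} \|Y_s^{(\varepsilon)}\|_{\R^d}^p\right]
&\leq C_p\, E[\|Y_0\|_{\R^d}^p] + C_p\, t^{p-1} \!\int_0^t \!E[\|G(s, Y^{(\varepsilon)})\|_{\R^d}^p]\, ds + C_p\, |\sigma|^p t^{p/2}.
\end{align*}
The decisive point is that the BDG step uses only the operator-norm bound $\|\sigma(I_d - 2 h_\varepsilon\, \hat v \hat v^\top)\|_{\mathrm{op}} \leq |\sigma|$, which is independent of $\varepsilon$. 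Applying the linear growth (\ref{Linear_Growth_Functional}) gives
\begin{align*}
E[\|G(s, Y^{(\varepsilon)})\|_{\R^d}^p] \leq 2^{p-1} K(t)^p \left(1 + E\!\left[\sup_{0 \leq u \leq s} \|Y_u^{(\varepsilon)}\|_{\R^d}^p\right]\right),
\end{align*}
and Gronwall's lemma closes the estimate with a bound depending only on $E[\|Y_0\|_{\R^d}^p]$, $K(t)$, $|\sigma|$, $p$, and $t$ — in particular independent of $\varepsilon$. Combined with the bound on $X$ from the first paragraph, the lemma follows.

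The main obstacle I expect is the Lipschitz verification for the matrix-valued map across the annular transition region $\varepsilon < \|x - y\|_{\R^d} < 2\varepsilon$: outside this annulus the map is either identically zero or equal to the smooth projection $\hat v \hat v^\top$, but inside it one must use the product rule together with $|h_\varepsilon'|$ bounded and $\|x-y\|_{\R^d} \geq \varepsilon$ to obtain a finite (albeit $\varepsilon$-dependent) Lipschitz constant. Everything downstream — existence and uniqueness via Picard iteration, the BDG/Gronwall moment bound, and its $\varepsilon$-uniformity — is then a routine application of classical SDE theory, with the key conceptual point being that the $\varepsilon$-dependence of the Lipschitz constant does \emph{not} contaminate the $L^p$ bound because only the operator-norm bound $|\sigma|$ enters the BDG estimate.
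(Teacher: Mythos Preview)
Your proposal is correct and matches the paper's approach exactly: the paper's own proof is just the two lines ``The existence and uniqueness are standard. The latter claim can be proved by Gronwall's lemma,'' and what you have written is precisely the standard argument being invoked (Lipschitz diffusion via the $h_\varepsilon$ cut-off, $\varepsilon$-independent operator-norm bound for BDG, linear growth plus Gronwall for the uniform moment estimate). The Lipschitz continuity of $z \mapsto h_\varepsilon(\|z\|)\,zz^\top/\|z\|^2$ that you highlight is also stated explicitly by the paper later, in the proof of Lemma~\ref{Lp_bound_SGLD}.
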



\begin{proof}
	The existence and uniqueness are standard.
	The latter claim can be proved by Gronwall's lemma. 
\end{proof}


\begin{lem}
	\label{Tightness_of_Y}
	The family $\{ (X, Y^{(\varepsilon)}) \}_{\varepsilon > 0}$ is tight. 
\end{lem}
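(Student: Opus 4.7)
The plan is to establish tightness of $\{(X, Y^{(\varepsilon)})\}_{\varepsilon > 0}$ on $C([0, T]; \R^{2d})$ for every $T > 0$ via Kolmogorov's moment criterion, and then upgrade to tightness on $C([0, \infty); \R^{2d})$ by a standard diagonal argument over $T = 1, 2, \ldots$. Since $X$ does not depend on $\varepsilon$, its law is a single Borel probability measure on a Polish space and is automatically tight. Because tightness of both marginals yields tightness of the joint family (products of compact sets are compact), the problem reduces to proving tightness of $\{Y^{(\varepsilon)}\}_{\varepsilon > 0}$.

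The pivotal observation is that the coupling matrix
\begin{equation*}
A_u^{(\varepsilon)} := I_d - 2 h_\varepsilon\bigl( \| X_u - Y_u^{(\varepsilon)} \|_{\R^d} \bigr) e_u^{(\varepsilon)} {e_u^{(\varepsilon)}}^\top
\end{equation*}
has eigenvalue $1 - 2 h_\varepsilon(\cdot) \in [-1, 1]$ on the span of $e_u^{(\varepsilon)}$ and eigenvalue $1$ on its orthogonal complement, so its operator norm is bounded by $1$ and its Hilbert--Schmidt norm by $\sqrt{d}$, uniformly in $u$ and $\varepsilon$. With this bound in hand, I estimate the drift part of the increment $Y_t^{(\varepsilon)} - Y_s^{(\varepsilon)}$ via H\"older's inequality together with the linear growth condition (\ref{Linear_Growth_Functional}) and the uniform moment bound of Lemma \ref{Lp_Bounded_Y}, obtaining a bound of order $(t - s)^p$; and I estimate the stochastic integral part via the Burkholder--Davis--Gundy inequality combined with the Hilbert--Schmidt bound just noted, obtaining a bound of order $(t - s)^{p/2}$.

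Combining, for $0 \leq s \leq t \leq T$ with $t - s \leq 1$ there exists a constant $C(T)$, independent of $\varepsilon$, such that
\begin{equation*}
E\bigl[ \| Y_t^{(\varepsilon)} - Y_s^{(\varepsilon)} \|_{\R^d}^p \bigr] \leq C(T) (t - s)^{p/2}.
\end{equation*}
Assumption \ref{Assum_Main1} supplies $p > 2$, hence $p/2 > 1$, so Kolmogorov's tightness criterion applies and yields tightness of $\{Y^{(\varepsilon)}\}_{\varepsilon > 0}$ on $C([0, T]; \R^d)$. A diagonal extraction across integer horizons then upgrades this to tightness on $C([0, \infty); \R^d)$, and joint tightness follows as explained above.

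The only real trap I foresee is failing to notice the $\varepsilon$-uniform control on the Hilbert--Schmidt norm of $A_u^{(\varepsilon)}$, without which the Burkholder--Davis--Gundy estimate would degrade as $\varepsilon \to 0$ and nothing useful would follow; once that uniform boundedness is secured, the verification of Kolmogorov's criterion is entirely routine, and no estimate on the bounded variation process $V$ is required because $\mathcal{L}(X)$ is already tight on its own.
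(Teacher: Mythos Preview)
Your proof is correct and follows essentially the same route as the paper: both reduce to the $Y^{(\varepsilon)}$ marginal, verify Kolmogorov's moment criterion via a $p$-th moment bound on increments (H\"older for the drift, Burkholder--Davis--Gundy for the stochastic integral, combined with Lemma~\ref{Lp_Bounded_Y}), and use $p>2$ so that the exponent $p/2$ exceeds $1$. The paper simply cites the criterion from \cite{kara} (Theorem~2.4.10 and Problem~2.4.11) and records the BDG estimate in one line, whereas you spell out the uniform Hilbert--Schmidt bound on $A_u^{(\varepsilon)}$ and the passage from finite horizons to $C([0,\infty);\R^d)$ explicitly; these are just differences in presentation, not in substance.
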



\begin{proof}
	According to Theorem 2.4.10 and Problem 2.4.11 in \citep{kara}, we only have to prove the following two conditions since $X$ is independent of $\varepsilon > 0$. 
	\begin{itemize}
		\setlength{\leftskip}{0.5cm}
		\item[{\bf [T1]}] $\sup_{\varepsilon > 0} E[\| Y_0^{(\varepsilon)} \|_{\R^d}^\nu] < \infty$,
		\item[{\bf [T2]}]$\sup_{\varepsilon > 0} E[\| Y_t^{(\varepsilon)} - Y_s^{(\varepsilon)} \|_{\R^d}^q] \leq C_T (t -s )^{1 + r}$,\qquad $T > 0$,\quad $0 \leq s \leq t \leq T$.  
	\end{itemize}
	Here, $\nu > 0$ and $q, r > 0$ are constants independent of $T$, while $C_T > 0$ may depend on $T$. 
	
	For {\bf [T1]}, we can take $\nu = p$ by Lemma \ref{Lp_Bounded_Y}. 
	For {\bf [T2]}, we have 
	\begin{align*}
		E[\| Y_t^{(\varepsilon)} - Y_s^{(\varepsilon)} \|_{\R^d}^p]
		&\leq 2^p \left\{ (t-s)^{\frac{p}{p-1}} \int_s^t E[\|G(u, Y_u^{(\varepsilon)})\|_{\R^d}^p] du+ O_{\sigma, d, p} \left( (t - s)^{p/2}  \right) \right\}
	\end{align*}
	by Burkholder--Davis--Gundy inequality. 
	Thus, {\bf [T2]} holds for $q = p$, $r = \min \{ p/2 - 1, p / (p-1) \}$ by Lemma \ref{Lp_Bounded_Y}.
\end{proof}


\begin{lem}
	\label{Vanish_Occupation_Time}
	For any subsequence $\varepsilon_\ell$, we can extract a further subsequence $\varepsilon_{\ell_k}$ so that 
	\begin{align*}
		\lim_{k \to \infty} \{1 - h_{\varepsilon_{\ell_k}}(\| Z_t^{(\varepsilon_{\ell_k})} \|_{\R^d})\} h_{\varepsilon_{\ell_k}}(\| Z_t^{(\varepsilon_{\ell_k})} \|_{\R^d}) 
		= 0
	\end{align*}
	holds almost everywhere on $[0, \infty) \times \Omega$. 
	Here, $Z_t^{(\varepsilon)} = X_t - Y_t^{(\varepsilon)}$. 
\end{lem}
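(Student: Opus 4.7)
My plan is to establish $L^2([0,T]\times\Omega)$-convergence to zero of the quantity $(1-h_\varepsilon(R_\cdot^{(\varepsilon)})) h_\varepsilon(R_\cdot^{(\varepsilon)})$ as $\varepsilon \to 0$, for every fixed $T > 0$, where $R_t^{(\varepsilon)} := \|Z_t^{(\varepsilon)}\|_{\R^d}$. The stated almost-everywhere convergence along a subsequence on $[0,\infty)\times\Omega$ then follows by a routine extraction together with a Cantor diagonal argument over $T = 1, 2, \ldots$

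First I would derive a semimartingale decomposition of the scalar process $R^{(\varepsilon)}$. Applying Ito's formula to $\sqrt{\|Z_t^{(\varepsilon)}\|_{\R^d}^2 + \delta}$ and letting $\delta \downarrow 0$, and crucially exploiting the rank-one form $4\sigma^2 h_\varepsilon(R)^2 e_t^{(\varepsilon)} {e_t^{(\varepsilon)}}^\top$ of the diffusion matrix of $Z^{(\varepsilon)}$, the two second-order corrections cancel and one obtains
\begin{equation*}
R_t^{(\varepsilon)} = R_0 + \int_0^t \chi_{\{R_s^{(\varepsilon)} > 0\}} \langle e_s^{(\varepsilon)}, dV_s - G(s, Y^{(\varepsilon)}) ds\rangle_{\R^d} + 2 \sigma \int_0^t h_\varepsilon(R_s^{(\varepsilon)}) {e_s^{(\varepsilon)}}^\top dW_s,
\end{equation*}
where the stochastic integrand is well-defined across $\{R = 0\}$ because $h_\varepsilon(0) = 0$. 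The quadratic variation of the martingale part is $\int_0^t 4\sigma^2 h_\varepsilon(R_s^{(\varepsilon)})^2 ds$, and the total variation of the drift part is dominated by $\check{V}_t + \int_0^t \|G(s, Y^{(\varepsilon)})\|_{\R^d} ds$, whose $L^1$-norms are bounded uniformly in $\varepsilon$ thanks to Assumption \ref{Assum_Main1} and Lemma \ref{Lp_Bounded_Y}.

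Next I would invoke Tanaka's formula
\begin{equation*}
|R_T^{(\varepsilon)} - a| - |R_0 - a| = \int_0^T \mathrm{sgn}(R_s^{(\varepsilon)} - a)\, dR_s^{(\varepsilon)} + L_T^a(R^{(\varepsilon)}),
\end{equation*}
take expectations, and use the Burkholder--Davis--Gundy inequality on the martingale term to obtain a uniform estimate $\sup_{\varepsilon > 0}\sup_{a \in [0, 1]} E[L_T^a(R^{(\varepsilon)})] \leq C_T$ for every $T > 0$. Combined with the occupation times density formula this yields
\begin{equation*}
E \int_0^T 4\sigma^2 h_\varepsilon(R_s^{(\varepsilon)})^2 \chi_{[\varepsilon, 2\varepsilon]}(R_s^{(\varepsilon)})\, ds = 2 E\int_\varepsilon^{2\varepsilon} L_T^a(R^{(\varepsilon)})\, da \leq 2 C_T \varepsilon.
\end{equation*}

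Finally, the elementary pointwise bound $[(1-h_\varepsilon(r)) h_\varepsilon(r)]^2 \leq h_\varepsilon(r)^2 \chi_{[\varepsilon, 2\varepsilon]}(r)$, valid since $(1 - h_\varepsilon) h_\varepsilon$ vanishes outside $[\varepsilon, 2\varepsilon]$ and $(1-h_\varepsilon)^2 \leq 1$ inside, combines with the previous display to give
\begin{equation*}
E\int_0^T [(1 - h_\varepsilon(R_s^{(\varepsilon)})) h_\varepsilon(R_s^{(\varepsilon)})]^2\, ds \leq \frac{C_T}{2\sigma^2} \varepsilon \longrightarrow 0.
\end{equation*}
Hence, from any initial sequence $\varepsilon_\ell \to 0$ one can extract a further subsequence converging to zero almost everywhere on $[0, T] \times \Omega$, and a diagonal extraction over $T \in \N$ delivers the claimed $\varepsilon_{\ell_k}$ for the full half-line. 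The step I expect to require the most care is the first one: deriving a clean semimartingale decomposition of $R^{(\varepsilon)}$ across the zero set of $Z^{(\varepsilon)}$. The saving grace is precisely the rank-one, $h_\varepsilon$-weighted form of the diffusion of $Z^{(\varepsilon)}$, which makes the potentially singular terms arising in the $\delta \downarrow 0$ limit either cancel identically or be multiplied by $h_\varepsilon(0) = 0$.
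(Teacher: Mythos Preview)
Your proposal is correct and follows essentially the same route as the paper: derive the semimartingale decomposition of $\|Z^{(\varepsilon)}\|$ via It\^o's formula applied to $\sqrt{\|Z^{(\varepsilon)}\|^2+\delta}$ and passage $\delta\downarrow 0$, bound the local time uniformly in $\varepsilon$ and $a$ through Tanaka's formula, apply the occupation density formula to convert the time integral into an $O(\varepsilon)$ spatial integral, and conclude by subsequence extraction plus a diagonal argument. The only cosmetic differences are that the paper chooses the test function $v = 1-h_\varepsilon$ (yielding $L^1$-convergence of $(1-h_\varepsilon)h_\varepsilon^2$) whereas you use $v=\chi_{[\varepsilon,2\varepsilon]}$ together with the pointwise bound $[(1-h_\varepsilon)h_\varepsilon]^2\le h_\varepsilon^2\chi_{[\varepsilon,2\varepsilon]}$ to obtain $L^2$-convergence directly; both lead to the same conclusion.
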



\begin{proof}
	Since $e_{i, s}^{(\varepsilon)} = Z_{i, s}^{(\varepsilon)} / \|Z_s^{(\varepsilon)}\|_{\R^d}$, for all $\delta > 0$, we have
	\begin{align*}
		\sum_{i, j=1}^d \left( \frac{\delta_{ij}}{(\|Z_s^{(\varepsilon)}\|_{\R^d}^2 + \delta)^{1/2}} - \frac{Z_{i, s}^{(\varepsilon)} Z_{j, s}^{(\varepsilon)}}{(\|Z_s^{(\varepsilon)}\|_{\R^d}^2 + \delta)^{3/2}} \right) e_{i, s}^{(\varepsilon)} e_{j, s}^{(\varepsilon)} 
		&= \frac{\delta}{(\|Z_s^{(\varepsilon)}\|_{\R^d}^2 + \delta)^{3/2}}. 
	\end{align*}
	Here, $\delta_{i j}$ denotes the Kronecker's delta. 
	Thus, Ito's formula yields
	\begin{align*}
		(\|Z_t^{(\varepsilon)}\|_{\R^d}^2 + \delta)^{1/2} 
		&= (\|Z_0^{(\varepsilon)}\|_{\R^d}^2 + \delta)^{1/2} 
		+ \int_0^t \frac{1}{(\|Z_s^{(\varepsilon)}\|_{\R^d}^2 + \delta)^{1/2}} \langle Z_s^{(\varepsilon)}, dV_s - G(s, Y^{(\varepsilon)}) ds \rangle_{\R^d} \\
		&\quad+ 2 \sigma \int_0^t \frac{\|Z_s^{(\varepsilon)}\|_{\R^d} h_\varepsilon(\|Z_s^{(\varepsilon)}\|_{\R^d})}{(\|Z_s^{(\varepsilon)}\|_{\R^d}^2 + \delta)^{1/2}} \langle e_s^{(\varepsilon)}, dW_s \rangle_{\R^d} 
		+ 2 \sigma^2 \int_0^t \frac{\delta h_\varepsilon(\| Z_s^{(\varepsilon)} \|)^2}{(\|Z_s^{(\varepsilon)}\|_{\R^d}^2 + \delta)^{3/2}} ds.  
	\end{align*} 
	Since $h_\varepsilon$ is not $0$ only on $\{ |a| \geq \varepsilon \}$, 
	\[
	\frac{Z_s^{(\varepsilon)}}{(\|Z_s^{(\varepsilon)}\|_{\R^d}^2 + \delta)^{1/2}} \to \chi_{\{ Z_s^{(\varepsilon)} \neq 0 \}} e_s^{(\varepsilon)},\qquad
	\frac{\| Z_s^{(\varepsilon)} \|_{\R^d} h_\varepsilon(\| Z_s^{(\varepsilon)} \|_{\R^d})}{(\|Z_s^{(\varepsilon)}\|_{\R^d}^2 + \delta)^{1/2}} \to h_\varepsilon(Z_s^{(\varepsilon)}),\qquad
	\frac{\delta h_\varepsilon(\| Z_s^{(\varepsilon)} \|_{\R^d})^2}{(\|Z_s^{(\varepsilon)}\|_{\R^d}^2 + \delta)^{3/2}} \leq \frac{\delta}{(\varepsilon^2 + \delta)^{3/2}} \to 0
	\]
	hold as $\delta \to 0$. 
	Therefore, taking the limit $\delta \to 0$, we find that $\| Z_t^{(\varepsilon)} \|_{\R^d}$ is a one-dimensional semi-martingale satisfying
	\begin{align*}
		\| Z_t^{(\varepsilon)} \|_{\R^d} 
		&= \| Z_0^{(\varepsilon)} \|_{\R^d}
		+ \int_0^t \chi_{\{ Z_s^{(\varepsilon)} \neq 0 \}} \langle e_s^{(\varepsilon)}, dV_s - G(s, Y^{(\varepsilon)}) ds \rangle_{\R^d} 
		+ 2 \sigma \int_0^t h_\varepsilon(\| Z_s^{(\varepsilon)} \|_{\R^d}) \langle e_s^{(\varepsilon)}, dW_s \rangle_{\R^d} \\
		&\eqqcolon \| Z_0^{(\varepsilon)} \|_{\R^d} + V_t^{(\varepsilon)} + M_t^{(\varepsilon)}.
	\end{align*} 
	According to (3.7.10) in \citep{kara}, the local time $\Lambda^{(\varepsilon)}(t, a)$ of $\| Z_t^{(\varepsilon)} \|_{\R^d}$ is given by
	\[
	\Lambda^{(\varepsilon)}(t, x) 
	= |\| Z_t^{(\varepsilon)} \|_{\R^d} -  a| - |\| Z_0^{(\varepsilon)} \|_{\R^d} - a| - \int_0^t {\rm sgn}(\| Z_s^{(\varepsilon)} \|_{\R^d} -  a) d M_s^{(\varepsilon)} - \int_0^t {\rm sgn}(\| Z_s^{(\varepsilon)} \|_{\R^d} -  a) d V_s^{(\varepsilon)}.
	\]
	In particular, for each fixed $t$, $\sup_{\varepsilon > 0, |a| \leq 1} E[\Lambda^{(\varepsilon)}(t, a)] < \infty$ holds by Lemma \ref{Lp_Bounded_Y}. 
	In addition, by the definition of the local time, we have
	\begin{align}
		\label{Occupation_Density_Formula}
		\int_0^t v(\| Z_s^{(\varepsilon)} \|_{\R^d}) d \langle M^{(\varepsilon)} \rangle_s 
		= \int_{\R} v(a) \Lambda^{(\varepsilon)}(t, a) da 
	\end{align}
	for all measurable function $v : \R \to [0, \infty)$. 
	Taking $v = 1 - h_\varepsilon$ in (\ref{Occupation_Density_Formula}), since $1 - h_\varepsilon$ is not $0$ only on $\{ |a| \leq 2 \varepsilon \}$, we obtain
	\[
	\int_0^t \{ 1 - h_\varepsilon(\| Z_s^{(\varepsilon)} \|_{\R^d}) \} h_\varepsilon(\| Z_s^{(\varepsilon)} \|_{\R^d})^2 d s 
	= \frac{1}{4 \sigma^2} \int_{\R} \{ 1 - h_\varepsilon(a) \} \Lambda^{(\varepsilon)}(t, a) da 
	= \frac{1}{4 \sigma^2} \int_{-2 \varepsilon}^{2 \varepsilon} \{ 1 - h_\varepsilon(a) \} \Lambda^{(\varepsilon)}(t, a) da. 
	\]
	Thus, 
	\[
	\varlimsup_{\ell \to \infty} E[\int_0^t \{ 1 - h_{\varepsilon_\ell}(\| Z_s^{(\varepsilon_\ell)} \|_{\R^d}) \} h_\varepsilon(\| Z_s^{(\varepsilon_\ell )}\|_{\R^d})^2 d s ] 
	\leq \varlimsup_{\ell \to \infty} \frac{\varepsilon_\ell}{\sigma^2} \sup_{\varepsilon > 0, |a| \leq 2 \varepsilon} E[\Lambda^{(\varepsilon)}(t, a)] 
	= 0
	\]
	holds and we can extract a subsequence $\varepsilon_{\ell_k}$ so that $\{ 1 - h_{\varepsilon_{\ell_k}}(\| Z_s^{(\varepsilon_{\ell_k})} \|_{\R^d}) \} h_{\varepsilon_{\ell_k}}(\| Z_s^{(\varepsilon_{\ell_k})}\|_{\R^d}) \to 0$ holds almost everywhere on $[0, t] \times \Omega$. 
	Applying the diagonal argument, we obtain the desired result. 
\end{proof}

\subsection{Proof of Theorem \ref{Main_Theorem_Weak_Conv}}

By Lemma \ref{Tightness_of_Y} and Prohorov's theorem, we can extract a subsequence $\varepsilon_\ell$ so that $(X, Y^{(\varepsilon_\ell)})$ has its weak limit $(\tilde{X}, \tilde{Y})$. 
To confirm that $\mathcal{L}(\tilde{X}, \tilde{Y})$ is a coupling between $\mathcal{L}(X)$ and $\mathcal{L}(Y)$, we only have to show $\mathcal{L}(\tilde{Y}) = \mathcal{L}(Y)$ since $\mathcal{L}(\tilde{X}) = \mathcal{L}(X)$ is obvious. 
Furthermore, the pathwise uniqueness of (\ref{Functional_SDE}) yields the weak uniqueness of itself by corollary to Lemma 5.1.2 in \citep{Ikewata}. 
Therefore, according to Propositions 5.4.6 and 5.4.11 in \citep{kara}, we only have to prove that $M^f$ defined by (\ref{Martingale_Problem}) below is a martingale for all compact supported $f \in C^2(\R^d; \R)$. 
\begin{align}
	\label{Martingale_Problem}
	M_t^f 
	\coloneqq f(\tilde{Y}_t) - f(\tilde{Y}_0) - \int_0^t \langle G(s, \tilde{Y}), \nabla f(\tilde{Y}_s) \rangle_{\R^d} ds - \frac{\sigma^2}{2} \int_0^t \Delta f(\tilde{Y}_s) ds.
\end{align}
For each $s \geq 0$, let $\B_s$ be the smallest $\sigma$-algebra on $C([0, \infty); \R^d)$ such that the map $C([0, \infty); \R^d) \ni \varphi \mapsto \varphi(\min \{ \cdot, s \})$ is $\B_s$-measurable. 
By Ito's formula, 
\begin{align}
	M_t^{(\varepsilon, f)} 
	&\coloneqq f(Y^{(\varepsilon)}_t) 
	- f(Y^{(\varepsilon)}_0) 
	- \int_0^t \langle G(s, Y^{(\varepsilon)}), \nabla f(Y^{(\varepsilon)}_s) \rangle_{\R^d} ds - \frac{\sigma^2}{2} \int_0^t \Delta f(Y^{(\varepsilon)}_s) ds \notag \\
	&\quad + 2 \sigma^2 \sum_{i, j=1}^d \int_0^t \partial_{i j}^2 f(Y^{(\varepsilon)}_s) \{1 - h_{\varepsilon}(\| Z_s^{(\varepsilon)} \|_{\R^d})\} h_{\varepsilon}(\| Z_s^{(\varepsilon)} \|_{\R^d}) e_{i, s}^{(\varepsilon)} e_{j, s}^{(\varepsilon)} ds \label{Approx_Martingale_Problem}.
\end{align}
is a martingale for all $\varepsilon > 0$. 
Thus, for all $s \leq t$ and $\B_s$-measurable bounded continuous functional $F : C([0, \infty); \R^d) \to \R$, we have
\begin{align}
	\label{Sol_Approx_Martingale_Problem}
	E[M_t^{(\varepsilon, f)} F(Y^{(\varepsilon)})] 
	= E[M_s^{(\varepsilon, f)} F(Y^{(\varepsilon)})]. 
\end{align}
By Lemma \ref{Vanish_Occupation_Time}, extracting a further subsequence of $\varepsilon_\ell$ if needed, we may assume that $\{1 - h_{\varepsilon_\ell}(\| Z_s^{(\varepsilon_\ell)} \|_{\R^d})\} h_{\varepsilon_\ell}(\| Z_s^{(\varepsilon_\ell)} \|_{\R^d}) \to 0$ holds almost everywhere. 
As a result, taking the limit $\varepsilon \to 0$ along with $\varepsilon_\ell$ in (\ref{Sol_Approx_Martingale_Problem}), Lemma \ref{WeakConv_Non_Bdd} yields
\begin{align}
	\label{SolMartingale_Problem}
	E[M_t^f F(\tilde{Y})] 
	= E[M_s^f F(\tilde{Y})]. 
\end{align}
(\ref{SolMartingale_Problem}) means that $M_t^f$ is a martingale. 
\qed

\section{Bounds on the Difference between Langevin Dynamics with Different Drift Terms}
\label{SEC_First_Appli}

In this section, we describe the first application of Theorem \ref{Main_Theorem_Weak_Conv}. 
We apply Theorem \ref{Main_Theorem_Weak_Conv} to the evaluation of the difference between Langevin dynamics along with gradients of $F, G \in C^1(\R^d; \R)$, and prove Theorem  \ref{Main_Theorem_SGLD} (1) as its corollary. 
Here, $F$ and $G$ are assumed to be $(m, b)$-dissipative and $M$-smooth. 

Let $X$ and $Y$ be the solutions of SDEs with initial values $X_0$ and $Y_0$
\begin{align}
	&dX_t = - \nabla F(X_t) dt + \sqrt{2 / \beta} dW_t, \label{LSDE_along_F}\\
	&dY_t = - \nabla G(Y_t) dt + \sqrt{2 / \beta} dW_t, \label{LSDE_along_G}
\end{align}
respectively. 
First, adopting the technique developed in \citep{Ebe}, we derive a bound on the difference between $\mathcal{L}(X_t)$ and $\mathcal{L}(Y_t)$ on the bases of the ARC $(X, Y^{(\varepsilon)})$ of $(X, Y)$ defined by
\begin{align}
	\label{GenBound_Approximate_Reflection_Coupling}
	\begin{cases}
		dY_t^{(\varepsilon)} 
		= - \nabla G(Y_t^{(\varepsilon)}) dt + \sqrt{2 / \beta} (I_d - 2 h_\varepsilon( \| X_t - Y_t^{(\varepsilon)} \|_{\R^d} ) e_t^{(\varepsilon)} {e_t^{(\varepsilon)}}^\top) d W_t, \\
		Y_0^{(\varepsilon)} = Y_0. 
	\end{cases}
\end{align}
Here, $e_t^{(\varepsilon)} = (X_t - Y_t^{(\varepsilon)}) / \| X_t - Y_t^{(\varepsilon)} \|_{\R^d}$. 

\subsection{Notations form \citep{Ebe}}
\label{SUBSEC_Ebe_Notation}

Before going into the details, we prepare the special case of notations used in \citep{Ebe}. 
For $p > 0$, define $V_p : \R^d \to \R$ by $V_p(x) = \| x \|_{\R^d}^p$ and let $\bar{V}_p(x) = 1 + V_p(x)$.
For constants $C(p)$ and $\lambda(p)$ defined in Lemma \ref{p-th_Lyap}, let
\begin{align}
	\label{Def_C_Lambda}
	C = C(2) + \lambda(2),\qquad 
	\lambda = \lambda(2)
\end{align}
and let 
\begin{align}
	S_1 
	&\coloneqq \{ (x, y) \in \R^d \times \R^d \mid \bar{V}_2(x) + \bar{V}_2(y) \leq 2 \lambda^{-1} C \}, \label{DefOfS1} \\
	S_2
	&\coloneqq \{ (x, y) \in \R^d \times \R^d \mid \bar{V}_2(x) + \bar{V}_2(y) \leq 4C(1+\lambda^{-1}) \}. \label{DefOfS2} 
\end{align}
The diameters of $S_1$ and $S_2$ are denoted by $R_1$ and $R_2$, respectively, where the diameter of a set $\Gamma \subset \R^d$ is defined by $\sup_{x, y \in \Gamma} \| x - y \|_{\R^d}$. 

For a constant $\kappa$ defined by (\ref{Condi_On_kappa}), we define $Q(\kappa)$ by
\begin{align}
	\label{DefOfQ}
	Q(\kappa)
	\coloneqq \sup_{x \in \R^d} \frac{\| \nabla \bar{V}_2(x) \|_{\R^d}}{\max \{ \bar{V}_2(x), \kappa^{-1} \}} 
	= \sup_{x \in \R^d} \frac{2 \| x \|_{\R^d}}{\max \{ 1 + \| x \|_{\R^d}^2, \kappa^{-1} \}} 
	= 2 \sqrt{ \kappa - \kappa^2 } \in (0, 1]
\end{align}
and functions $\varphi, \Phi : [0, \infty) \to [0, \infty)$ by
\begin{align}
	\label{DefOfPhi}
	\varphi(r) 
	\coloneqq \exp \left( - \frac{M \beta}{8} r^2 - 2 Q(\kappa) r \right),\quad
	\Phi(r) = \int_0^r \varphi(s) ds, 
\end{align}
respectively. 
For constants $\zeta$ and $\xi$ defined by
\begin{align}
	\label{DefOfBetaAndXi}
	\frac{1}{\zeta} 
	\coloneqq \int_0^{R_2} \Phi(s) \varphi(s)^{-1} ds,\quad
	\frac{1}{\xi} 
	\coloneqq \int_0^{R_1} \Phi(s) \varphi(s)^{-1} ds, 
\end{align}
let 
\begin{align}
	\label{DefOfG}
	g(r) 
	\coloneqq 1 - \frac{\zeta}{4} \int_0^{\min \{r, R_2\}} \Phi(s) \varphi(s)^{-1} ds - \frac{\xi}{4} \int_0^{\min \{r, R_1\}} \Phi(s) \varphi(s)^{-1} ds.
\end{align}
Furthermore, for
\begin{align}
	\label{Def_Of_F}
	f(r) 
	\coloneqq 
	\begin{cases}
		{\displaystyle\int_0^{\min \{r, R_2\}} \varphi(s) g(s) ds}, &r \geq 0 \\
		r, &r<0
	\end{cases}
\end{align}
and $U(x, y) \coloneqq 1 + \kappa \bar{V}_2(x) + \kappa \bar{V}_2(y)$, let
\begin{align}
	\label{Rho2}
	\rho_2(x, y) 
	= f(\| x-y \|_{\R^d}) U(x, y),\qquad x, y \in \R^d.
\end{align}
Finally, for probability measures $\mu$ and $\nu$ on $\R^d$, denoting the set of all couplings between them by $\Pi(\mu, \nu)$, let
\begin{align}
	\label{W_rho2}
	\mathcal{W}_{\rho_2}(\mu, \nu) 
	\coloneqq \inf_{\gamma \in \Pi(\mu, \nu)} \int_{\R^d \times \R^d} \rho_2(x, y) \gamma(dx dy).
\end{align}
Here, for random variables $Z_1$ and $Z_2$, $\mathcal{W}_{\rho_2}(\mathcal{L}(Z_1), \mathcal{L}(Z_2))$ may be abbreviated as $\mathcal{W}_{\rho_2}(Z_1, Z_2)$. 

\subsection{Uniform bound on the time for the difference between (\ref{LSDE_along_F}) and (\ref{LSDE_along_G})} 

The following Proposition \ref{Main_Theorem_GenBound} is an extension of Theorem 2.2 in \citep{Ebe} to the case of different drift terms. 
Although Proposition \ref{Main_Theorem_GenBound} is proved in the same manner as Theorem 2.2 in \citep{Ebe}, we give the complete proof of it to explain that the error terms caused by adopting ARC do not disturb the proof. 


\begin{prop}
	\label{Main_Theorem_GenBound}
	Let $X_0, Y_0 \in L^4(\Omega; \R^d)$ and let $c \coloneqq \min \left\{ \frac{\zeta}{\beta}, \frac{\lambda}{2}, 2 C \lambda \kappa \right\}$.
	Then, for the solutions $X$ and $Y$ of (\ref{LSDE_along_F}) and (\ref{LSDE_along_G}), we have 
	\begin{align}
		\mathcal{W}_{\rho_2}(X_t, Y_t)
		&\leq e^{- c t} E\rho_2(X_0, Y_0)] 
		+ O_\alpha \left( e^{- c t} \int_0^t e^{c s} E[\| \nabla F(Y_s) - \nabla G(Y_s) \|_{\R^d}^2]^{1/2} ds \right). \label{Main_Result_GenBound}
	\end{align}
	Here, $\alpha = (m, b, M, \beta, \|\nabla F(0)\|_{\R^d}, \|\nabla G(0)\|_{\R^d}, E[\| X_0 \|_{\R^d}^4], E[\| Y_0 \|_{\R^d}^4], d)$. 
\end{prop}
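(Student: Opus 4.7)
The approach is to establish (\ref{Main_Result_GenBound}) first with $Y$ replaced by $Y^{(\varepsilon)}$, uniformly in $\varepsilon > 0$, and then to pass to the limit along the subsequence $\varepsilon_\ell$ furnished by Theorem \ref{Main_Theorem_Weak_Conv}. The uniform $L^4$-bound of Lemma \ref{Lp_Bounded_Y} together with the at-most-quadratic growth of $\rho_2$ yields $\mathcal{W}_{\rho_2}(X_t, Y_t) \leq \liminf_\ell E[\rho_2(X_t, Y_t^{(\varepsilon_\ell)})]$ by lower semicontinuity of $\rho_2$ and uniform integrability, so it suffices to bound the right-hand side.

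The core computation is Ito's formula on $\rho_2(X_t, Y_t^{(\varepsilon)}) = f(r_t^{(\varepsilon)})\,U(X_t, Y_t^{(\varepsilon)})$, where $r_t^{(\varepsilon)} = \|X_t - Y_t^{(\varepsilon)}\|_{\R^d}$. Reusing the semimartingale decomposition from the proof of Lemma \ref{Vanish_Occupation_Time} with $V_t = -\int_0^t \nabla F(X_s)\,ds$, the process $r_t^{(\varepsilon)}$ has drift $\chi_{\{r^{(\varepsilon)}\neq 0\}}\langle e^{(\varepsilon)}, \nabla G(Y^{(\varepsilon)}) - \nabla F(X)\rangle_{\R^d}\,dt$ and quadratic variation $(8/\beta)\,h_\varepsilon^2(r^{(\varepsilon)})\,dt$. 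I would split
\[
\nabla F(X) - \nabla G(Y^{(\varepsilon)}) = \bigl(\nabla F(X) - \nabla F(Y^{(\varepsilon)})\bigr) + \bigl(\nabla F(Y^{(\varepsilon)}) - \nabla G(Y^{(\varepsilon)})\bigr);
\]
the first summand, controlled by $M$-Lipschitzness, combines with the Ito contributions from $U$ and the design (\ref{DefOfQ}) of $Q(\kappa)$ to reproduce Eberle's $-c \rho_2$ contraction (the $\zeta/\beta$ piece from the $f''$ term, the $\lambda/2$ and $2C\lambda\kappa$ pieces from the dissipativity of $F$ and $G$), while the second summand contributes the forcing $O_\alpha(1)\,E[\|\nabla F(Y_t^{(\varepsilon)}) - \nabla G(Y_t^{(\varepsilon)})\|_{\R^d}^2]^{1/2}$ after Cauchy--Schwarz against the bounded factor $f'(r)$ and $L^2$-bounded Lyapunov factors.

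Adopting the ARC instead of the true RC modulates the second-order coefficient by $h_\varepsilon^2 \leq 1$, producing a residue $R(\varepsilon, t)$ proportional to $\{1 - h_\varepsilon^2(r_t^{(\varepsilon)})\}$ supported on $\{r_t^{(\varepsilon)} \leq 2\varepsilon\}$; by (a further extraction from) the occupation-time argument of Lemma \ref{Vanish_Occupation_Time}, $\int_0^t E[R(\varepsilon_{\ell_k}, s)]\,ds \to 0$. Collecting,
\[
\tfrac{d}{dt} E[\rho_2(X_t, Y_t^{(\varepsilon)})] \leq -c\, E[\rho_2(X_t, Y_t^{(\varepsilon)})] + O_\alpha(1)\, E[\|\nabla F(Y_t^{(\varepsilon)}) - \nabla G(Y_t^{(\varepsilon)})\|_{\R^d}^2]^{1/2} + R(\varepsilon, t),
\]
Grönwall gives the $\varepsilon$-version of (\ref{Main_Result_GenBound}), and weak convergence together with the polynomial growth of $\nabla F, \nabla G$ and uniform moment bounds transfers the forcing to $E[\|\nabla F(Y_s) - \nabla G(Y_s)\|_{\R^d}^2]^{1/2}$ in the $\varepsilon_\ell \to 0$ limit.

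The delicate point will be that Eberle's calibration of $f, g, \kappa$ is exactly tuned to the pure RC, whose diffusion magnitude is $8/\beta$, so the modulation by $h_\varepsilon^2$ must not break the contraction. Since $h_\varepsilon^2 \leq 1$ enters only in the second-order term, the contraction deficit is collected precisely in $R(\varepsilon, t)$; confirming that all other Ito terms, including the mixed $f'(r)\,d\langle r, U\rangle$ piece, are controlled by their RC-analogue (they depend on $h_\varepsilon$ at worst linearly with $|h_\varepsilon| \leq 1$) is the bookkeeping at the heart of the proof.
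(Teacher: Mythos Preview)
Your proposal is correct and mirrors the paper's proof: the same three-step decomposition (bound $df(\|Z^{(\varepsilon)}\|)$, bound $dU$, combine via the product rule for $\rho_2$), the same splitting $\nabla F(X)-\nabla G(Y^{(\varepsilon)}) = (\nabla F(X)-\nabla F(Y^{(\varepsilon)})) + (\nabla F(Y^{(\varepsilon)})-\nabla G(Y^{(\varepsilon)}))$, the same use of Lemma~\ref{Vanish_Occupation_Time} to kill the ARC residues, and Gr\"onwall followed by passage to the weak limit of Theorem~\ref{Main_Theorem_Weak_Conv}. The one technicality you pass over is that $f$ is only concave, not $C^2$ (it has kinks at $R_1,R_2$), so the paper invokes the It\^o--Tanaka formula with the second-derivative measure $\mu_f$ and uses the local-time identity to show the occupation time of $\{R_1,R_2\}$ vanishes for small $\varepsilon$, after which $f$ may be treated as $C^2$ along $\|Z^{(\varepsilon)}\|$.
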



\begin{proof}
	Fix $\varepsilon > 0$ and let $Z_t^{(\varepsilon)} = X_t - Y_t^{(\varepsilon)}$ for $X_t$ and $Y_t^{(\varepsilon)}$ defined by (\ref{GenBound_Approximate_Reflection_Coupling}). \\
	
	
	\noindent {\bf \underline{Step 1} }\ Evaluation of $f(\| Z_t^{(\varepsilon)} \|_{\R^d})$. \\
	
	\noindent As in the proof of Lemma \ref{Vanish_Occupation_Time}, we can show that $\| Z_t^{(\varepsilon)} \|_{\R^d}$ is an one-dimensional semi-martingale satisfying
	\begin{align*}
		\| Z_t^{(\varepsilon)} \|_{\R^d} 
		&= \| Z_0^{(\varepsilon)} \|_{\R^d}
		- \int_0^t \chi_{\{ Z_s^{(\varepsilon)} \neq 0 \}} \langle e_s^{(\varepsilon)}, \nabla F(X_s) - \nabla G(Y_s^{(\varepsilon)}) \rangle_{\R^d} ds
		+ 2 \sqrt{\frac{2}{\beta}} \int_0^t h_\varepsilon(\| Z_s^{(\varepsilon)} \|_{\R^d}) \langle e_s^{(\varepsilon)}, dW_s \rangle_{\R^d}. 
	\end{align*}
	Let $\Lambda^{(\varepsilon)}(t, a)$ be the local time of $\| Z^{(\varepsilon)} \|_{\R^d}$. 
	Since $f$ is a concave function by Lemma \ref{PropertyOfF}, Tanaka's formula (Theorem 3.7.1 in \citep{kara}) yields
	\begin{align*}
		f(\| Z_t^{(\varepsilon)} \|_{\R^d}) -f(\| Z_0^{(\varepsilon)} \|_{\R^d})
		&= - \int_0^t f_-^\prime(\| Z_s^{(\varepsilon)} \|_{\R^d}) \chi_{\{ Z_s^{(\varepsilon)} \neq 0 \}} \langle e_s^{(\varepsilon)}, \nabla F(X_s) - \nabla G(Y_s^{(\varepsilon)}) \rangle_{\R^d} ds \\
		&\quad+ 2 \sqrt{\frac{2}{\beta}} \int_0^t f_-^\prime(\| Z_s^{(\varepsilon)} \|_{\R^d}) h_\varepsilon(\| Z_s^{(\varepsilon)} \|_{\R^d}) \langle e_s^{(\varepsilon)}, dW_s \rangle_{\R^d} 
		+ \frac{1}{2} \int_{-\infty}^{\infty} \Lambda^{(\varepsilon)}(t, a) \mu_f(da).
	\end{align*}
	Here, $f_-^\prime$ and $\mu_f$ denote the left derivative and the second derivative measure of $f$, respectively. 
	Whereas, by the definition of the local time, for a measurable function $v = \chi_{\{ R_1, R_2 \}}$, we have
	\begin{align}
		\label{Local_Time_Has_No_Mass_At_Points}
		\frac{8}{\beta} \int_0^t \chi_{\{ R_1, R_2 \}}(\| Z_s^{(\varepsilon)} \|_{\R^d}) h_\varepsilon( \| Z_s^{(\varepsilon)} \|_{\R^d} )^2 ds
		= \int_{\R} \chi_{\{ R_1, R_2 \}}(a) \Lambda^{(\varepsilon)}(t, a) da
		= 0.
	\end{align}
	Since $h_\varepsilon( a )$ is equal to $1$ if $| a | \geq 2 \varepsilon$, the occupation time of $\| Z^{(\varepsilon)} \|_{\R^d}$ on $\{ R_1, R_2 \}$ must be $0$ by (\ref{Local_Time_Has_No_Mass_At_Points}) if $\varepsilon$ is sufficiently small for $R_1$ and $R_2$. 
	Thus, we may assume $f \in C^2(\R; \R)$ when we consider its value at $\| Z_s^{(\varepsilon)} \|_{\R^d}$. 
	
	Similarly, since $\mu_f(\{ R_1, R_2 \}) \leq 0$ by Lemma \ref{PropertyOfMu_f}, the definition of the local time yields
	\begin{align*}
		\int_{-\infty}^\infty \Lambda^{(\varepsilon)}(t, a) \mu_f(da)
		&\leq \int_{-\infty}^\infty \chi_{\R \setminus \{ R_1, R_2 \}}(a) \Lambda^{(\varepsilon)}(t, a) \mu_f(da) \\
		&= \int_{-\infty}^\infty \chi_{\R \setminus \{ R_1, R_2 \}}(a) f^{\prime \prime}(a) \Lambda^{(\varepsilon)}(t, a) da \\
		&= \frac{8}{\beta} \int_0^t f^{\prime \prime}(\| Z_s^{(\varepsilon)} \|_{\R^d}) h_\varepsilon( \| Z_s^{(\varepsilon)} \|_{\R^d} )^2 ds.
	\end{align*}
	Therefore, we obtain 
	\begin{align*}
		d f(\| Z_t^{(\varepsilon)} \|_{\R^d})
		&\leq - f^\prime(\| Z_t^{(\varepsilon)} \|_{\R^d}) \chi_{\{ Z_t^{(\varepsilon)} \neq 0 \}} \langle e_t^{(\varepsilon)}, \nabla F(X_t) - \nabla G(Y_t^{(\varepsilon)}) \rangle_{\R^d} dt \\
		&\quad+ 2 \sqrt{\frac{2}{\beta}} f^\prime(\| Z_t^{(\varepsilon)} \|_{\R^d}) h_\varepsilon(\| Z_t^{(\varepsilon)} \|_{\R^d}) \langle e_t^{(\varepsilon)}, dW_t \rangle_{\R^d} 
		+ \frac{4}{\beta} f^{\prime \prime}(\| Z_t^{(\varepsilon)} \|_{\R^d}) h_\varepsilon( \| Z_t^{(\varepsilon)} \|_{\R^d} )^2 dt. 
	\end{align*}
	
	Finally, by $M$-smoothness of $F$, we have
	\begin{align}
		\label{M_smooth_Bound}
		\langle e_t^{(\varepsilon)}, \nabla F(X_t) - \nabla F(Y_t^{(\varepsilon)}) \rangle_{\R^d}
		\leq M \| Z_t^{(\varepsilon)} \|_{\R^d}. 
	\end{align}
	Furthermore, by Lemma \ref{Esti_On_f}, we have also
	\begin{align*}
		f^{\prime \prime}(\| Z_t^{(\varepsilon)} \|_{\R^d}) 
		&\leq - \left( \frac{M \beta}{4} \| Z_t^{(\varepsilon)} \|_{\R^d} + 2Q(\kappa) \right) f^\prime(\| Z_t^{(\varepsilon)} \|_{\R^d}) 
		- \frac{\zeta}{4} f(\| Z_t^{(\varepsilon)} \|_{\R^d}) \chi_{(0, R_2)}(\| Z_t^{(\varepsilon)} \|_{\R^d}) 
		- \frac{\xi}{4} f(\| Z_t^{(\varepsilon)} \|_{\R^d}) \chi_{(0, R_1)}(\| Z_t^{(\varepsilon)} \|_{\R^d}). 
	\end{align*}
	As a result, noting $0 \leq f^\prime \leq 1$, we obtain
	\begin{align}
		d f(\| Z_t^{(\varepsilon)} \|_{\R^d}) 
		&\leq \| \nabla F(Y_t^{(\varepsilon)}) - \nabla G(Y_t^{(\varepsilon)}) \|_{\R^d} dt \notag
		+ M \| Z_t^{(\varepsilon)} \|_{\R^d} \{1 -  h_\varepsilon( \| Z_t^{(\varepsilon)} \|_{\R^d} )^2\} dt \notag \\
		&\quad+ 2 \sqrt{\frac{2}{\beta}} f^\prime(\| Z_t^{(\varepsilon)} \|_{\R^d}) h_\varepsilon(\| Z_t^{(\varepsilon)} \|_{\R^d}) \langle e_t^{(\varepsilon)}, dW_t \rangle_{\R^d} 
		- \frac{8Q(\kappa)}{\beta} f^\prime(\| Z_t^{(\varepsilon)} \|_{\R^d}) h_\varepsilon( \| Z_t^{(\varepsilon)} \|_{\R^d} )^2 dt \notag \\
		&\quad- \frac{\zeta}{\beta} f(\| Z_t^{(\varepsilon)} \|_{\R^d}) \chi_{(0, R_2)}(\| Z_t^{(\varepsilon)} \|_{\R^d}) dt
		- \frac{\xi}{\beta} f(\| Z_t^{(\varepsilon)} \|_{\R^d}) \chi_{(0, R_1)}(\| Z_t^{(\varepsilon)} \|_{\R^d}) dt. \label{Ebe_Step1}
	\end{align}
	
	
	\noindent {\bf \underline{Step 2} }\ Evaluation of $U(X_t, Y_t^{(\varepsilon)})$. \\
	
	\noindent Ito's formula yields
	\begin{align}
		d U(X_t, Y_t^{(\varepsilon)}) 
		&= \kappa (\mathcal{L}_F \bar{V}_2(X_t) + \mathcal{L}_G \bar{V}_2(Y_t^{(\varepsilon)})) dt 
		- \frac{4 \kappa h_\varepsilon( \| Z_t^{(\varepsilon)} \|_{\R^d}) (1 - h_\varepsilon( \| Z_t^{(\varepsilon)} \|_{\R^d} ))}{\beta} \sum_{i, j =1}^d e_{i, t}^{(\varepsilon)} e_{j, t}^{(\varepsilon)} \partial_{i, j}^2 \bar{V}_2(Y_t^{(\varepsilon)}) dt \notag \\
		&\quad+ \kappa \sqrt{\frac{2}{\beta}} \langle \nabla \bar{V}_2(X_t) + \nabla \bar{V}_2(Y_t^{(\varepsilon)}), dW_t \rangle_{\R^d} 
		-2 \kappa \sqrt{\frac{2}{\beta}} h_\varepsilon( \| Z_t^{(\varepsilon)} \|_{\R^d} ) \langle e_t^{(\varepsilon)}, \nabla \bar{V}_2(Y_t^{(\varepsilon)}) \rangle_{\R^d} \langle e_t^{(\varepsilon)}, dW_t \rangle_{\R^d}. \label{Ito_Formula_For_U}
	\end{align}
	Here, $\mathcal{L}_H = - \langle \nabla H, \nabla \rangle_{\R^d} - \beta^{-1} \Delta$, for $H \in C^1(\R^d; \R)$. 
	According to Lemma \ref{IneqDeriveFrom_S}, by the $(m, b)$-dissipativity of $F$ and $G$, we have
	\[
	\mathcal{L}_F \bar{V}_2(X_t) + \mathcal{L}_G \bar{V}_2(Y_t^{(\varepsilon)})
	\leq 2C - \lambda (\bar{V}_2(X_t^{(n)}) + \bar{V}_2(Y_t^{(\varepsilon)})).
	\]
	Furthermore, by Lemma \ref{Choose_kappa} and (\ref{DefOfBetaAndXi}), we have also
	\[
	2C \kappa 
	\leq \frac{1}{\beta} \left( \int_0^{R_1} \varphi(s)^{-1} \Phi(s) ds \right)^{-1}
	=  \frac{\xi}{\beta}.
	\]
	Thus, (\ref{OutOfS1}) and (\ref{OutOfS2}) yield
	\begin{align*}
		\kappa (\mathcal{L}_F \bar{V}_2(X_t) + \mathcal{L}_G \bar{V}_2(Y_t^{(\varepsilon)})) 
		&= \kappa (\mathcal{L}_F \bar{V}_2(X_t) + \mathcal{L}_G \bar{V}_2(Y_t^{(\varepsilon)})) \chi_{(0, R_1]}(\| Z_t^{(\varepsilon)} \|_{\R^d}) 
		-\frac{\lambda}{2} \min\{ 1, 4 C \kappa \} U(X_t, Y_t^{(\varepsilon)}) \chi_{(R_2, \infty)}(\| Z_t^{(\varepsilon)} \|_{\R^d}) \\
		&\leq \frac{\xi}{\beta} \chi_{(0, R_1]}(\| Z_t^{(\varepsilon)} \|_{\R^d}) -\frac{\lambda}{2} \min\{ 1, 4 C \kappa \} U(X_t, Y_t^{(\varepsilon)}) \chi_{(R_2, \infty)}(\| Z_t^{(\varepsilon)} \|_{\R^d}) \\
		&\leq \frac{\xi}{\beta} U(X_t, Y_t^{(\varepsilon)}) \chi_{(0, R_1]}(\| Z_t^{(\varepsilon)} \|_{\R^d}) -\frac{\lambda}{2} \min\{ 1, 4 C \kappa \} U(X_t, Y_t^{(\varepsilon)}) \chi_{(R_2, \infty)}(\| Z_t^{(\varepsilon)} \|_{\R^d}).
	\end{align*}
	As a result, since the Hessian matrix of $\bar{V}_2$ is nonnegative-definite, we obtain
	\begin{align}
		d U(X_t, Y_t^{(\varepsilon)}) 
		&\leq \left( \frac{\xi}{\beta} U(X_t, Y_t^{(\varepsilon)}) \chi_{(0, R_1]}(\| Z_t^{(\varepsilon)} \|_{\R^d}) -\frac{\lambda}{2} \min\{ 1, 4 C \kappa \} U(X_t, Y_t^{(\varepsilon)}) \chi_{(R_2, \infty)}(\| Z_t^{(\varepsilon)} \|_{\R^d}) \right) dt \notag \\
		&\quad+ \kappa \sqrt{\frac{2}{\beta}} \langle \nabla \bar{V}_2(X_t) + \nabla \bar{V}_2(Y_t^{(\varepsilon)}), dW_t \rangle_{\R^d} 
		-2 \kappa \sqrt{\frac{2}{\beta}} h_\varepsilon( \| Z_t^{(\varepsilon)} \|_{\R^d} ) \langle e_t^{(\varepsilon)}, \nabla \bar{V}_2(Y_t^{(\varepsilon)}) \rangle_{\R^d} \langle e_t^{(\varepsilon)}, dW_t \rangle_{\R^d}. \label{Ebe_Step2}
	\end{align}
	
	
	\noindent {\bf \underline{Step 3} }\ Evaluation of $\rho_2(X_t, Y_t^{(\varepsilon)})$. \\
	
	\noindent According to the representations of $f(\| Z_t^{(\varepsilon)} \|_{\R^d})$ and $U(X_t, Y_t^{(\varepsilon)})$ as semi-martingales, 
	\begin{align*}
		d \langle f(\| Z^{(\varepsilon)} \|_{\R^d}), U(X, Y^{(\varepsilon)}) \rangle_t 
		&= \frac{4 \kappa}{\beta} f^\prime(\| Z_t^{(\varepsilon)} \|_{\R^d}) h_\varepsilon(\| Z_t^{(\varepsilon)} \|_{\R^d})^2 \langle e_t^{(\varepsilon)}, \nabla \bar{V}_2(X_t) - \nabla \bar{V}_2(Y_t^{(\varepsilon)}) \rangle_{\R^d} dt \\
		&\quad+ \frac{4 \kappa}{\beta} f^\prime(\| Z_t^{(\varepsilon)} \|_{\R^d}) \{ 1 - h_\varepsilon( \| Z_t^{(\varepsilon)} \|_{\R^d} ) \} h_\varepsilon(\| Z_t^{(\varepsilon)} \|_{\R^d}) \langle e_t^{(\varepsilon)}, \nabla \bar{V}_2(X_t) + \nabla \bar{V}_2(Y_t^{(\varepsilon)}) \rangle_{\R^d} dt
	\end{align*}
	holds. 
	By the definition of $Q(\kappa)$, for  all $x \neq y$ we have
	\begin{align*}
		\kappa \left\langle \nabla \bar{V}_2(x) - \nabla \bar{V}_2(y), \frac{x-y\ \ \ }{\| x-y \|_{\R^d}} \right\rangle_{\R^d}
		&\leq \kappa \| \nabla \bar{V}_2(x) - \nabla \bar{V}_2(y) \|_{\R^d} \\
		&\leq U(x, y) \left( \frac{\| \nabla \bar{V}_2(x) \|_{\R^d}}{\kappa^{-1} + \bar{V}_2(x)} + \frac{\| \nabla \bar{V}_2(y) \|_{\R^d}}{\kappa^{-1} + \bar{V}_2(y)} \right) \\
		&\leq 2Q(\kappa) U(x, y).
	\end{align*}
	Thus, 
	\begin{align}
		d \langle f(\| Z^{(\varepsilon)} \|_{\R^d}), U(X, Y^{(\varepsilon)}) \rangle_t
		&\leq \frac{8 Q(\kappa)}{\beta} f^\prime(\| Z_t^{(\varepsilon)} \|_{\R^d}) U(X_t, Y_t^{(\varepsilon)}) h_\varepsilon(\| Z_t^{(\varepsilon)} \|_{\R^d})^2 dt \notag \\
		&\quad+ \frac{4 \kappa}{\beta} \{ 1 - h_\varepsilon( \| Z_t^{(\varepsilon)} \|_{\R^d} ) \} h_\varepsilon(\| Z_t^{(\varepsilon)} \|_{\R^d}) \langle e_t^{(\varepsilon)}, \nabla \bar{V}_2(X_t) + \nabla \bar{V}_2(Y_t^{(\varepsilon)}) \rangle_{\R^d} dt \label{Bound_Quadratic_Variation}
	\end{align}
	holds. 
	
	For the first term in the R.H.S of 
	\begin{align}
		\label{Ito_Formula_Rho}
		d (f(\| Z_t^{(\varepsilon)} \|_{\R^d}), U(X_t, Y_t^{(\varepsilon)})) 
		= U(X_t, Y_t^{(\varepsilon)}) d f(\| Z_t^{(\varepsilon)} \|_{\R^d}) + f(\| Z_t^{(\varepsilon)} \|_{\R^d}) d U(X_t, Y_t^{(\varepsilon)}) + d \langle f(\| Z^{(\varepsilon)} \|_{\R^d}), U(X, Y^{(\varepsilon)}) \rangle_t, 
	\end{align}
	the following holds by Step 1.
	\begin{align*}
		&U(X_t, Y_t^{(\varepsilon)}) d f(\| Z_t^{(\varepsilon)} \|_{\R^d}) \\
		&\quad\leq - \frac{\zeta}{\beta} \rho_2(X_t, Y_t^{(\varepsilon)}) \chi_{(0, R_2)}(\| Z_t^{(\varepsilon)} \|_{\R^d}) dt 
		- \frac{\xi}{\beta} \rho_2(X_t, Y_t^{(\varepsilon)}) \chi_{(0, R_1)}(\| Z_t^{(\varepsilon)} \|_{\R^d}) dt \\
		&\qquad+ U(X_t, Y_t^{(\varepsilon)}) \| \nabla F(Y_t^{(\varepsilon)}) - \nabla G(Y_t^{(\varepsilon)}) \|_{\R^d} dt
		- \frac{8Q(\kappa)}{\beta} f^\prime(\| Z_t^{(\varepsilon)} \|_{\R^d}) U(X_t, Y_t^{(\varepsilon)}) h_\varepsilon( \| Z_t^{(\varepsilon)} \|_{\R^d} )^2 dt \\
		&\qquad+2 \sqrt{\frac{2}{\beta}} f^\prime(\| Z_t^{(\varepsilon)} \|_{\R^d}) U(X_t, Y_t^{(\varepsilon)}) h_\varepsilon(\| Z_t^{(\varepsilon)} \|_{\R^d}) \langle e_t^{(\varepsilon)}, dW_t \rangle_{\R^d} 
		+ M U(X_t, Y_t^{(\varepsilon)}) \| Z_t^{(\varepsilon)} \|_{\R^d} \{ 1 -  h_\varepsilon( \| Z_t^{(\varepsilon)} \|_{\R^d} )^2 \} dt. 
	\end{align*}
	Similarly, for the second term in the R.H.S of (\ref{Ito_Formula_Rho}), the following holds by Step 2. 
	\begin{align*}
		&f(\| Z_t^{(\varepsilon)} \|_{\R^d}) d U(X_t, Y_t^{(\varepsilon)}) \\
		&\quad\leq \left( \frac{\xi}{\beta} \rho_2(X_t, Y_t^{(\varepsilon)}) \chi_{(0, R_1]}(\| Z_t^{(\varepsilon)} \|_{\R^d}) -\frac{\lambda}{2} \min\{ 1, 4 C \kappa \} \rho_2(X_t, Y_t^{(\varepsilon)}) \chi_{(R_2, \infty)}(\| Z_t^{(\varepsilon)} \|_{\R^d}) \right) dt \\ 
		&\qquad+ \kappa \sqrt{\frac{2}{\beta}} f(\| Z_t^{(\varepsilon)} \|_{\R^d}) \langle \nabla \bar{V}_2(X_t) + \nabla \bar{V}_2(Y_t^{(\varepsilon)}), dW_t \rangle_{\R^d} 
		-2 \kappa \sqrt{\frac{2}{\beta}} f(\| Z_t^{(\varepsilon)} \|_{\R^d}) h_\varepsilon( \| Z_t^{(\varepsilon)} \|_{\R^d} ) \langle e_t^{(\varepsilon)}, \nabla \bar{V}_2(Y_t^{(\varepsilon)}) \rangle_{\R^d} \langle e_t^{(\varepsilon)}, dW_t \rangle_{\R^d}. 
	\end{align*}
	As a result, there exists a martingale $M^{(\varepsilon)}$ such that
	\begin{align}
		&d \rho_2(X_t, Y_t^{(\varepsilon)}) \notag \\
		&\quad\leq - \min \left\{ \frac{\zeta}{\beta}, \frac{\lambda}{2}, 2 C \lambda \kappa \right\} \rho_2(X_t, Y_t^{(\varepsilon)}) dt 
		+ U(X_t, Y_t^{(\varepsilon)}) \| \nabla F(Y_t^{(\varepsilon)}) - \nabla G(Y_t^{(\varepsilon)}) \|_{\R^d} dt \notag \\
		&\qquad+ M U(X_t, Y_t^{(\varepsilon)}) \| Z_t^{(\varepsilon)} \|_{\R^d} \{1 -  h_\varepsilon( \| Z_t^{(\varepsilon)} \|_{\R^d} )^2\} dt 
		+ \frac{4 \kappa}{\beta} \{ 1 - h_\varepsilon( \| Z_t^{(\varepsilon)} \|_{\R^d} ) \} h_\varepsilon(\| Z_t^{(\varepsilon)} \|_{\R^d}) \langle e_t^{(\varepsilon)}, \nabla \bar{V}_2(X_t) + \nabla \bar{V}_2(Y_t^{(\varepsilon)}) \rangle_{\R^d} dt \notag \\
		&\qquad + d M_t^{(\varepsilon)}. \label{Ebe_Step3}
	\end{align}
	Since $\| Z_t^{(\varepsilon)} \|_{\R^d}\{ 1 -  h_\varepsilon( \| Z_t^{(\varepsilon)} \|_{\R^d} )^2 \} \leq 2 \varepsilon$, taking expectation in both sides of (\ref{Ebe_Step3}) and the limit $\varepsilon \to 0$, Lemma \ref{Vanish_Occupation_Time} and Theorem \ref{Main_Theorem_Weak_Conv} yield
	\begin{align*}
		\mathcal{W}_{\rho_2}(X_t, Y_t)
		\leq E[\rho_2(X_0, Y_0)]
		- c \int_0^t \mathcal{W}_{\rho_2}(X_s, Y_s) ds + \int_0^t E[U(X_s, Y_s) \| \nabla F(Y_s) - \nabla G(Y_s) \|_{\R^d}] ds.
	\end{align*}
	Thus, $d (e^{c t} \mathcal{W}_{\rho_2}(X_t, Y_t) ) \leq e^{c t} E[U(X_t, Y_t) \| \nabla F(Y_t) - \nabla G(Y_t) \|_{\R^d}] dt$, and therefore we obtain
	\begin{align*}
		\mathcal{W}_{\rho_2}(X_t, Y_t)
		&\leq e^{- c t} E[\rho_2(X_0, Y_0)] 
		+ e^{- c t} \int_0^t e^{c s} E[U(X_s, Y_s) \| \nabla F(Y_s) - \nabla G(Y_s) \|_{\R^d}] ds \\
		&\leq e^{- c t} E[\rho_2(X_0, Y_0)] 
		+ \sup_{u \geq 0} E[U(X_u, Y_u)^2]^{1/2} e^{- c t} \int_0^t e^{c s} E[\| \nabla F(Y_s) - \nabla G(Y_s) \|_{\R^d}^2]^{1/2} ds.
	\end{align*}
	Since $\sup_{u \geq 0} E[U(X_u, Y_u)^2]^{1/2} = O_\alpha(1)$ by Lemma \ref{Lp_bound_SGLD}, the proof is completed. 
\end{proof}


As a corollary to Proposition \ref{Main_Theorem_GenBound}, we can bound the difference between Gibbs measures $\pi_{\beta, F}(dx) \propto e^{- \beta F(x)} dx$ and $\pi_{\beta, G}(dx) \propto e^{- \beta G(x)} dx$ in term of the difference between gradients $\nabla F$ and $\nabla G$. 
The integral of a function $h$ with respect to measure $\mu$ is denoted by $\mu(h)$. 


\begin{cor}
	\label{Corr_GenBound}
	Let $F$ and $G$ be nonnegative and let $H \in C^1(\R^d; \R)$ be $M^\prime$-smooth. 
	Then, we have 
	\begin{align}
		\label{Corr_Result_GenBound}
		| \pi_{\beta, F}(H) - \pi_{\beta, G}(H) | 
		\leq O_{m, b, M, M^\prime, \beta, \| \nabla H(0) \|_{\R^d}, \| \nabla F(0) \|_{\R^d}, \| \nabla G(0) \|_{\R^d}, d} \left( \left( \int_{\R^d} \| \nabla F(x) - \nabla G(x) \|_{\R^d}^2 \pi_{\beta, G}(dx) \right)^{1/2} \right).
	\end{align}
\end{cor}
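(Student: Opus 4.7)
The plan is to apply Proposition \ref{Main_Theorem_GenBound} with both Langevin dynamics initialized in stationarity. I would take $X_0 \sim \pi_{\beta,F}$ and $Y_0 \sim \pi_{\beta,G}$ on a common probability space independent of the driving Brownian motion $W$. By the $(m,b)$-dissipativity of $F$ and $G$, standard moment estimates give that $\pi_{\beta,F}$ and $\pi_{\beta,G}$ have finite moments of all orders, so the $L^4$-hypothesis of Proposition \ref{Main_Theorem_GenBound} is satisfied. Each Gibbs measure is invariant for its associated overdamped Langevin SDE, so $\mathcal{L}(X_t) = \pi_{\beta,F}$ and $\mathcal{L}(Y_t) = \pi_{\beta,G}$ for all $t \geq 0$, and in particular
\[
E[\| \nabla F(Y_s) - \nabla G(Y_s) \|_{\R^d}^2]^{1/2} = D \coloneqq \left( \int_{\R^d} \| \nabla F(x) - \nabla G(x) \|_{\R^d}^2 \, \pi_{\beta,G}(dx) \right)^{1/2}
\]
for every $s \geq 0$. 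The time integral in (\ref{Main_Result_GenBound}) then collapses to $D(1 - e^{-ct})/c \leq D/c$, and letting $t \to \infty$ kills the initial-condition term, yielding $\mathcal{W}_{\rho_2}(\pi_{\beta,F}, \pi_{\beta,G}) = O_\alpha(D)$ with the stated dependence of constants.

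Next I would convert this $\mathcal{W}_{\rho_2}$-bound into an estimate on $|\pi_{\beta,F}(H) - \pi_{\beta,G}(H)|$. For any coupling $\gamma \in \Pi(\pi_{\beta,F}, \pi_{\beta,G})$ one has
\[
|\pi_{\beta,F}(H) - \pi_{\beta,G}(H)| \leq \int_{\R^d \times \R^d} |H(x) - H(y)|\, \gamma(dx\, dy),
\]
and the $M'$-smoothness of $H$ together with the mean value theorem gives the pointwise bound $|H(x) - H(y)| \leq \bigl( \|\nabla H(0)\|_{\R^d} + M'(\|x\|_{\R^d} + \|y\|_{\R^d}) \bigr) \|x - y\|_{\R^d}$. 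To close the argument it therefore suffices to establish a deterministic comparison $|H(x) - H(y)| \leq C\, \rho_2(x,y)$ with $C$ depending only on the parameters listed in the corollary, and then take the infimum over $\gamma$.

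The crux of the proof is this pointwise comparison. Since $f$ is concave on $[0,\infty)$ with $f(0) = 0$ and is constant equal to $f(R_2)$ on $[R_2, \infty)$, there exists $c_f > 0$ (depending on $M, \beta, \kappa, R_2$) such that $f(r) \geq c_f \min(r, R_2)$, while by construction $U(x,y) \geq 1 + \kappa(\|x\|_{\R^d}^2 + \|y\|_{\R^d}^2)$. For $\|x - y\|_{\R^d} \leq R_2$, the factor $c_f \|x-y\|_{\R^d}$ inside $\rho_2$ absorbs the $\|x - y\|_{\R^d}$ on the right of the smoothness bound, while $U(x,y)$ absorbs $1 + \|x\|_{\R^d} + \|y\|_{\R^d}$. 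For $\|x - y\|_{\R^d} > R_2$, the value $f(R_2)$ is a positive constant and $\|x-y\|_{\R^d} \leq \|x\|_{\R^d} + \|y\|_{\R^d}$, so $(1 + \|x\|_{\R^d} + \|y\|_{\R^d})\|x - y\|_{\R^d}$ is dominated by a constant multiple of $U(x,y)$. Both cases give $|H(x) - H(y)| \leq C\, \rho_2(x,y)$ with the desired dependence of $C$.

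The main obstacle is the Lipschitz-type comparison between $H$ and $\rho_2$ just sketched: because $f$ is flat beyond $R_2$ while the Lipschitz envelope of $H$ grows linearly, one must verify that the quadratic growth built into $U$ exactly compensates in the regime $\|x - y\|_{\R^d} > R_2$. The remaining ingredients — invariance of the Gibbs measures, finiteness of moments coming from dissipativity, and passing to the limit $t \to \infty$ in the bound of Proposition \ref{Main_Theorem_GenBound} — follow from standard arguments.
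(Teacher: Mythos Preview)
Your proposal is correct and follows essentially the same route as the paper's proof: initialize both diffusions in their stationary Gibbs measures, apply Proposition~\ref{Main_Theorem_GenBound}, send $t\to\infty$, and then convert the $\mathcal{W}_{\rho_2}$-bound into a bound on $|\pi_{\beta,F}(H)-\pi_{\beta,G}(H)|$ via a pointwise comparison $|H(x)-H(y)|\le C\rho_2(x,y)$. The paper packages the last step as Lemma~\ref{Bound_By_Rho} (itself built from Lemmas~\ref{BeforeWasserBound} and~\ref{ChangeToRho}, which carry out exactly the two-regime case split you describe), and it invokes Lemma~\ref{WeakConv_TimeAve} for the time limit, whereas you observe directly that stationarity makes the integrand constant so the limit is immediate; these are cosmetic differences, not a different argument.
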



\begin{proof}
	According to (5.4.58) in \citep{Ikewata}, Gibbs measures $\pi_{\beta, F}$ and $\pi_{\beta, G}$ are invariant measures for $X$ and $Y$ defined by (\ref{LSDE_along_F}) and (\ref{LSDE_along_G}) , respectively. 
	Thus, applying Proposition \ref{Main_Theorem_GenBound} for $\mathcal{L}(X_0) = \pi_{\beta, F}$ and $\mathcal{L}(Y_0) = \pi_{\beta, G}$ and taking the limit $t \to \infty$, we obtain by Lemma \ref{WeakConv_TimeAve} that 
	\begin{align*}
		\mathcal{W}_{\rho_2}(\pi_{\beta, F}, \pi_{\beta, G}) 
		\leq O_{m, b, M, \beta, \| \nabla H(0) \|_{\R^d}, \| \nabla F(0) \|_{\R^d}, \| \nabla G(0) \|_{\R^d}, d} \left( \left( \int_{\R^d} \| \nabla F(x) - \nabla G(x) \|_{\R^d}^2 \pi_{\beta, G}(dx) \right)^{1/2} \right).
	\end{align*}
	Since we have by Lemma \ref{Bound_By_Rho}
	\[
	| \pi_{\beta, F}(H) - \pi_{\beta, G}(H) | 
	\leq O_{m, b, M^\prime, \beta, \| \nabla H(0) \|_{\R^d}, d} \left( \mathcal{W}_{\rho_2}(\pi_{\beta, F}, \pi_{\beta, G}) \right), 
	\]
	the result follows from Proposition \ref{Main_Theorem_GenBound}. 
\end{proof}

\subsection{Proof of Theorem \ref{Main_Theorem_SGLD} (1)}

For $z_1, \dots, z_n$ and $z_1^\prime, \dots, z_n^\prime$, suppose that there exists at most one $i_0$ such that $z_{i_0} \neq z_{i_0}^\prime$. 
Then, setting $L_n^\prime(w) = \frac{1}{n} \sum_{i=1}^n \ell(w; z_i^\prime)$ and denoting $X^{(n)}$ by $Y^{(n)}$ when the $L_n$ is replaced by $L_n^{\prime}$ in (\ref{LSDE_along_Ln}), Proposition \ref{Main_Theorem_GenBound} and Lemma \ref{Bound_By_Rho} yield the following for all $z \in \mathcal{Z}$. 
\begin{align*}
	| E[\ell(X_t^{(n)}; z)] - E[\ell(Y_t^{(n)}; z)] | 
	&\leq O_{\alpha_0} \left( e^{-c t}  \int_0^t e^{c s} E[\| \nabla L_n(Y_s^{(n)}) - \nabla L_n^\prime(Y_s^{(n)}) \|_{\R^d}^2]^{1/2} ds \right) \\
	&\leq O_{\alpha_0} \left( n^{-1} \right). 
\end{align*}
Here, we used Lemma \ref{Lp_bound_SGLD} in the second inequality. 
In particular, by Theorem 2.2 in \citep{Hadt}, if $z_1, \dots, z_n$ are IIDs generated from $\mathcal{D}$, then 
\begin{align*}
	| E[L(X_t^{(n)})] - E[L_n(X_t^{(n)})] | 
	\leq O_{\alpha_0} \left( n^{-1} \right)
\end{align*}
holds. 
\qed

\section{Bounds on the Discretization Error for Langevin Dynamics}
\label{SEC_Second_Appli}

In this section, we prove Theorem \ref{Main_Theorem_SGLD} (2) as the second application of Theorem \ref{Main_Theorem_Weak_Conv}. 
In our proof, it is important that Theorem \ref{Main_Theorem_Weak_Conv} admits the case of $G$ is a functional. 

In the following, we fix $\eta > 0$ and assume $F \in C^1(\R^d; \R)$ is $(m, b)$-dissipative and $M$-smooth. 
For initial values $X_0$ and $Y_0$, we consider the solutions $X$ and $Y$ of (\ref{LSDE_along_F}) and 
\begin{align}
	\label{SGLD_along_F}
	dY_t 
	= - \nabla F(Y_{\lfloor t / \eta \rfloor \eta}) dt + \sqrt{2 / \beta} d W_t,
\end{align}
respectively. 
Here, $\lfloor \cdot \rfloor$ denotes the floor function. 
Then we define the ARC between $X$ and $Y$ by
\begin{align}
	\label{StepSize_Approximate_Reflection_Coupling}
	\begin{cases}
		dY_t^{(\varepsilon)} 
		= - \nabla F(Y_{\lfloor t / \eta \rfloor  \eta}^{(\varepsilon)}) dt + \sqrt{2 / \beta} (I_d - 2 h_\varepsilon( \| X_t - Y_t^{(\varepsilon)} \|_{\R^d} ) e_t^{(\varepsilon)} {e_t^{(\varepsilon)}}^\top) d W_t, \\
		Y_0^{(\varepsilon)} = Y_0.
	\end{cases}
\end{align}
Note that the functional $G$ defined by $G(t, \varphi) = \nabla F(\varphi(\lfloor t / \eta \rfloor \eta))$ for $\varphi \in C([0, \infty); \R^d)$ is progressively measurable and satisfies  (\ref{Lipschitz_Functional}) and (\ref{Linear_Growth_Functional}).

\subsection{Uniform bound on the time for the difference between (\ref{LSDE_along_F}) and (\ref{SGLD_along_F})}

Proposition \ref{Main_Theorem_StepSizeBound} below gives a uniform evaluation on the time to the discretization error of (\ref{LSDE_along_F}) with order $\eta^{1/2}$. 
Since the proof of Proposition \ref{Main_Theorem_StepSizeBound} is quite similar to that of Proposition \ref{Main_Theorem_GenBound}, we only explain the important difference of those in the following proof. 


\begin{prop}
	\label{Main_Theorem_StepSizeBound}
	Let $X$ and $Y$ be the solutions of (\ref{LSDE_along_F}) and (\ref{SGLD_along_F}) with initial values $X_0, Y_0 \in L^4(\Omega; \R^d)$, respectively. 
	Then there exists some $\eta_0 = O_{m, M}(1)$ and the following inequality holds uniformly on $0 < \eta \leq \eta_0$.
	\begin{align}
		\label{Main_Result_StepSizeBound}
		\mathcal{W}_{\rho_2}(X_t, Y_t)
		&\leq e^{- c t} E[\rho_2(X_0, Y_0)] 
		+ O_{m, b, M, \beta, \|F(0)\|_{\R^d}, E[\| X_0 \|_{\R^d}^4], E[\| Y_0 \|_{\R^d}^4], d}(\eta^{1/2}).
	\end{align}
	Here, $c > 0$ is the constant defined in Proposition \ref{Main_Theorem_GenBound}.  
\end{prop}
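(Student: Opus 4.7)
The plan is to follow the three-step semi-martingale calculation in the proof of Proposition \ref{Main_Theorem_GenBound} almost verbatim, with only one structural substitution: in the analogues of (\ref{Ebe_Step1}) and (\ref{Ebe_Step3}), the drift mismatch $\|\nabla F(Y_t^{(\varepsilon)}) - \nabla G(Y_t^{(\varepsilon)})\|_{\R^d}$ is replaced by the discretization error $\|\nabla F(Y_t^{(\varepsilon)}) - \nabla F(Y_{\lfloor t/\eta \rfloor \eta}^{(\varepsilon)})\|_{\R^d}$, which by $M$-smoothness is dominated by $M\|Y_t^{(\varepsilon)} - Y_{\lfloor t/\eta \rfloor \eta}^{(\varepsilon)}\|_{\R^d}$. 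First, I would verify that the functional $G(t, \varphi) = \nabla F(\varphi(\lfloor t/\eta \rfloor \eta))$ satisfies Assumption \ref{Assum_Main1} (immediate from $M$-smoothness and progressive measurability of evaluation at past times), so that Theorem \ref{Main_Theorem_Weak_Conv} applies to (\ref{StepSize_Approximate_Reflection_Coupling}) and $(X, Y^{(\varepsilon)})$ admits a weakly convergent subsequence defining a coupling between $\mathcal{L}(X)$ and $\mathcal{L}(Y)$.

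The new ingredient is a one-step moment estimate of the form
\begin{align*}
\sup_{\varepsilon > 0}\ \sup_{k \in \N}\ \sup_{k\eta \leq s \leq (k+1)\eta} E\bigl[\|Y_s^{(\varepsilon)} - Y_{k\eta}^{(\varepsilon)}\|_{\R^d}^2\bigr]^{1/2} = O_{\alpha}(\eta^{1/2}),
\end{align*}
uniform in $t$ and valid for $0 < \eta \leq \eta_0$ with $\eta_0 = O_{m, M}(1)$. I would obtain it by writing $Y_s^{(\varepsilon)} - Y_{k\eta}^{(\varepsilon)} = -(s - k\eta)\nabla F(Y_{k\eta}^{(\varepsilon)}) + \sqrt{2/\beta}\int_{k\eta}^{s} (I_d - 2 h_\varepsilon e_u^{(\varepsilon)} {e_u^{(\varepsilon)}}^{\top})\, dW_u$, noting that the integrand is orthogonal so the stochastic integral has $L^2$-norm of order $\eta^{1/2}$, and controlling the drift by $\|\nabla F(Y_{k\eta}^{(\varepsilon)})\|_{\R^d} \leq M\|Y_{k\eta}^{(\varepsilon)}\|_{\R^d} + \|\nabla F(0)\|_{\R^d}$ together with a uniform-in-time moment bound $\sup_{\varepsilon, u} E[\|Y_u^{(\varepsilon)}\|_{\R^d}^2] < \infty$, which follows from a discrete Lyapunov argument for $\bar{V}_2$ under $(m, b)$-dissipativity of $F$. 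The smallness of $\eta_0$ is needed exactly here, so that the Euler-type update is contractive on the sublevel sets of $\bar{V}_2$ and the moments do not grow in $k$; this is the analogue of Lemma \ref{Lp_bound_SGLD} for $Y^{(\varepsilon)}$.

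With that bound in hand, I would take expectations in the analogue of (\ref{Ebe_Step3}) and send $\varepsilon \to 0$ along the subsequence of Theorem \ref{Main_Theorem_Weak_Conv}, using Lemma \ref{Vanish_Occupation_Time} together with the pointwise bound $\|Z_t^{(\varepsilon)}\|_{\R^d}\{1 - h_\varepsilon(\|Z_t^{(\varepsilon)}\|_{\R^d})^2\} \leq 2\varepsilon$ to dispose of the residual terms exactly as in Step 3 of Proposition \ref{Main_Theorem_GenBound}. After Cauchy--Schwarz on $E[U(X_t, Y_t) \cdot M \|Y_t - Y_{\lfloor t/\eta \rfloor \eta}\|_{\R^d}]$ (using $\sup_{u \geq 0} E[U(X_u, Y_u)^2]^{1/2} = O_\alpha(1)$ and the one-step bound above), the differential inequality becomes $d(e^{ct}\mathcal{W}_{\rho_2}(X_t, Y_t)) \leq O_\alpha(\eta^{1/2}) e^{ct}\, dt$, whose integration yields $\mathcal{W}_{\rho_2}(X_t, Y_t) \leq e^{-ct} E[\rho_2(X_0, Y_0)] + O_\alpha(\eta^{1/2})/c$, i.e.\ (\ref{Main_Result_StepSizeBound}). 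The main obstacle is the uniform-in-time one-step moment bound: the naive Gronwall bound is only locally uniform in $t$, so one must genuinely use $(m, b)$-dissipativity plus the step-size constraint $\eta \leq \eta_0(m, M)$ to obtain constants independent of $k$, which is what allows the $\eta^{1/2}$ error to appear additively rather than being amplified by $t$.
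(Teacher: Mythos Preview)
Your strategy is correct in spirit and very close to the paper's, but the claim that ``only one structural substitution'' is needed, affecting only the analogues of (\ref{Ebe_Step1}) and (\ref{Ebe_Step3}), overlooks a necessary modification in Step~2. In the It\^o formula for $U(X_t, Y_t^{(\varepsilon)}) = 1 + \kappa\bar V_2(X_t) + \kappa\bar V_2(Y_t^{(\varepsilon)})$, the drift of $\bar V_2(Y_t^{(\varepsilon)})$ is $-\langle \nabla \bar V_2(Y_t^{(\varepsilon)}), \nabla F(Y_{\lfloor t/\eta\rfloor\eta}^{(\varepsilon)})\rangle$, not $-\langle \nabla \bar V_2(Y_t^{(\varepsilon)}), \nabla F(Y_t^{(\varepsilon)})\rangle$. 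Hence one cannot write this as $\mathcal L_G \bar V_2(Y_t^{(\varepsilon)})$ for a dissipative $G$ and invoke Lemma~\ref{IneqDeriveFrom_S} verbatim: the frozen drift is not $(m,b)$-dissipative at the current point $Y_t^{(\varepsilon)}$. The paper repairs this by adding and subtracting $\nabla F(Y_t^{(\varepsilon)})$, which produces the extra term
\[
2\kappa\,\langle Y_t^{(\varepsilon)},\ \nabla F(Y_{\lfloor t/\eta\rfloor\eta}^{(\varepsilon)}) - \nabla F(Y_t^{(\varepsilon)})\rangle\,dt \ \leq\ 2M\kappa\,\|Y_t^{(\varepsilon)}\|_{\R^d}\,\|Y_t^{(\varepsilon)} - Y_{\lfloor t/\eta\rfloor\eta}^{(\varepsilon)}\|_{\R^d}\,dt
\]
in the bound for $dU$. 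After multiplying by $f(\|Z_t^{(\varepsilon)}\|_{\R^d}) \leq f(R_2)$ in Step~3, this yields a second one-step error contribution, so the factor hit by Cauchy--Schwarz at the end is $U(X_t,Y_t) + 2\kappa f(R_2)\|Y_t\|_{\R^d}$, not $U(X_t,Y_t)$ alone. The extra piece is harmless (still $O_\alpha(1)$ in $L^2$ under the fourth-moment hypothesis), but without it your Step~2 inequality (\ref{Ebe_Step2}) is simply false for the discretized drift.

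A smaller difference in execution: the paper takes expectations and passes to the weak limit $\varepsilon\to0$ \emph{before} inserting the one-step estimate, so that the remaining term is $E[\|Y_s - Y_{\lfloor s/\eta\rfloor\eta}\|_{\R^d}^2]$ for the un-coupled discretized process $Y$, to which Lemmas~\ref{OneStepBound} and~\ref{Lp_bound_SGD} apply directly. Your plan instead bounds the one-step error of $Y^{(\varepsilon)}$ uniformly in $\varepsilon$. That also works (the ARC diffusion coefficient is orthogonal, so the martingale increment has the same $L^2$ norm), but the accompanying uniform moment bound $\sup_{\varepsilon,u} E[\|Y_u^{(\varepsilon)}\|_{\R^d}^2] < \infty$ is neither Lemma~\ref{Lp_bound_SGLD} (continuous drift at the current point) nor Lemma~\ref{Lp_bound_SGD} (no ARC) as stated --- you would need to combine the two arguments.
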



\begin{proof}
	Fix $\varepsilon > 0$ and let $Z_t^{(\varepsilon)} = X_t - Y_t^{(\varepsilon)}$ for $X_t$ and $Y_t^{(\varepsilon)}$ defined by (\ref{StepSize_Approximate_Reflection_Coupling}). \\
	
	
	\noindent {\bf \underline{Step 1} }\ Evaluation of $f(\| Z_t^{(\varepsilon)} \|_{\R^d})$. \\
	
	\noindent In this case, (\ref{M_smooth_Bound}) is replaced by
	\begin{align*}
		&\langle e_t^{(\varepsilon)}, \nabla F(X_t) - \nabla F(Y_{\lfloor s / \eta \rfloor  \eta}^{(\varepsilon)}) \rangle_{\R^d} 
		\leq M \| Y_t^{(\varepsilon)} - Y_{\lfloor t / \eta \rfloor  \eta}^{(\varepsilon)} \|_{\R^d}  
		+ M \| Z_t^{(\varepsilon)} \|_{\R^d}.
	\end{align*}
	Thus, $f(\| Z_t^{(\varepsilon)} \|_{\R^d})$ is a semi-martingale that satisfies
	\begin{align*}
		d f(\| Z_t^{(\varepsilon)} \|_{\R^d}) 
		&\leq M \| Y_t^{(\varepsilon)} - Y_{\lfloor t / \eta \rfloor  \eta}^{(\varepsilon)} \|_{\R^d} dt 
		+ M \| Z_t^{(\varepsilon)} \|_{\R^d} \{ 1 -  h_\varepsilon( \| Z_t^{(\varepsilon)} \|_{\R^d} )^2\} dt \\
		&\quad+ 2 \sqrt{\frac{2}{\beta}} f^\prime(\| Z_t^{(\varepsilon)} \|_{\R^d}) h_\varepsilon(\| Z_t^{(\varepsilon)} \|_{\R^d}) \langle e_t^{(\varepsilon)}, dW_t \rangle_{\R^d} 
		- \frac{8Q(\kappa)}{\beta} f^\prime(\| Z_t^{(\varepsilon)} \|_{\R^d}) h_\varepsilon( \| Z_t^{(\varepsilon)} \|_{\R^d} )^2 dt \\
		&\quad- \frac{\zeta}{\beta} f(\| Z_t^{(\varepsilon)} \|_{\R^d}) \chi_{(0, R_2)}(\| Z_t^{(\varepsilon)} \|_{\R^d}) dt
		- \frac{\xi}{\beta} f(\| Z_t^{(\varepsilon)} \|_{\R^d}) \chi_{(0, R_1)}(\| Z_t^{(\varepsilon)} \|_{\R^d}) dt.
	\end{align*}
	
	
	\noindent {\bf \underline{Step 2} }\ Evaluation of $U(X_t, Y_t^{(\varepsilon)})$. \\
	
	\noindent In this case, the term $2 \kappa \langle Y_t^{(\varepsilon)}, \nabla F(Y_{\lfloor t / \eta \rfloor  \eta}^{(\varepsilon)}) - \nabla F(Y_t^{(\varepsilon)}) \rangle_{\R^d} dt$ is added to (\ref{Ito_Formula_For_U}). 
	Thus, we have
	\begin{align*}
		d U(X_t, Y_t^{(\varepsilon)}) 
		&\leq \left( \frac{\xi}{\beta} U(X_t, Y_t^{(\varepsilon)}) \chi_{(0, R_1]}(\| Z_t^{(\varepsilon)} \|_{\R^d}) -\frac{\lambda}{2} \min\{ 1, 4 C \kappa \} U(X_t, Y_t^{(\varepsilon)}) \chi_{(R_2, \infty)}(\| Z_t^{(\varepsilon)} \|_{\R^d}) \right) dt \\
		&\quad+2 M \kappa \| Y_t^{(\varepsilon)} \|_{\R^d} M \| Y_t^{(\varepsilon)} - Y_{\lfloor t / \eta \rfloor  \eta}^{(\varepsilon)} \|_{\R^d} dt \\
		&\quad+ \kappa \sqrt{\frac{2}{\beta}} \langle \nabla \bar{V}_2(X_t) + \nabla \bar{V}_2(Y_t^{(\varepsilon)}), dW_t \rangle_{\R^d} 
		-2 \kappa \sqrt{\frac{2}{\beta}} h_\varepsilon( \| Z_t^{(\varepsilon)} \|_{\R^d} ) \langle e_t^{(\varepsilon)}, \nabla \bar{V}_2(Y_t^{(\varepsilon)}) \rangle_{\R^d} \langle e_t^{(\varepsilon)}, dW_t \rangle_{\R^d}.
	\end{align*}
	
	
	\noindent {\bf \underline{Step 3} }\ Evaluation of $\rho_2(X_t, Y_t^{(\varepsilon)})$. \\
	
	\noindent The martingale parts of semi-martingales $f(\| Z_t^{(\varepsilon)} \|_{\R^d})$ and $U(X_t, Y_t^{(\varepsilon)})$ are the same as those in the proof of Proposition \ref{Main_Theorem_GenBound}. 
	Therefore, (\ref{Bound_Quadratic_Variation}) holds. 
	For (\ref{Ito_Formula_Rho}), in this case, $M \| Y_t^{(\varepsilon)} - Y_{\lfloor t / \eta \rfloor  \eta}^{(\varepsilon)} \|_{\R^d} dt$ and 
	$2 M \kappa \| Y_t^{(\varepsilon)} \|_{\R^d} \| Y_t^{(\varepsilon)} - Y_{\lfloor t / \eta \rfloor  \eta}^{(\varepsilon)} \|_{\R^d} dt$ are added to the upper bounds of $d f(\| Z_t^{(\varepsilon)} \|_{\R^d})$ and $d U(X_t, Y_t^{(\varepsilon)})$, respectively. 
	As a result, there exists a martingale $M^{(\varepsilon)}$ such that
	\begin{align*}
		&d \rho_2(X_t, Y_t^{(\varepsilon)}) \\
		&\quad\leq - \min \left\{ \frac{\zeta}{\beta}, \frac{\lambda}{2}, 2 C \lambda \kappa \right\} \rho_2(X_t, Y_t^{(\varepsilon)}) dt 
		+ M \{ U(X_t, Y_t^{(\varepsilon)}) + 2 \kappa f(\| Z_t^{(\varepsilon)} \|_{\R^d}) \| Y_t^{(\varepsilon)} \|_{\R^d} \} \| Y_t^{(\varepsilon)} - Y_{\lfloor t / \eta \rfloor  \eta}^{(\varepsilon)} \|_{\R^d} dt \\
		&\qquad+ M U(X_t, Y_t^{(\varepsilon)}) \| Z_t^{(\varepsilon)} \|_{\R^d} \{1 -  h_\varepsilon( \| Z_t^{(\varepsilon)} \|_{\R^d} )^2\} dt 
		+ \frac{4 \kappa}{\beta} \{ 1 - h_\varepsilon( \| Z_t^{(\varepsilon)} \|_{\R^d} ) \} h_\varepsilon(\| Z_t^{(\varepsilon)} \|_{\R^d}) \langle e_t^{(\varepsilon)}, \nabla \bar{V}_2(X_t) + \nabla \bar{V}_2(Y_t^{(\varepsilon)}) \rangle_{\R^d} dt \\
		&\qquad + d M_t^{(\varepsilon)}.
	\end{align*}
	Hence, as in the proof of Proposition \ref{Main_Theorem_GenBound}, we obtain
	\begin{align*}
		\mathcal{W}_{\rho_2}(X_t, Y_t)
		\leq E[\rho_2(X_0, Y_0)] 
		- c \int_0^t \mathcal{W}_{\rho_2}(X_s, Y_s) ds 
		+ M \int_0^t E[\{ U(X_s, Y_s) + 2 \kappa f(R_2) \| Y_s \|_{\R^d} \}\| Y_s - Y_{\lfloor s / \eta \rfloor  \eta} \|_{\R^d}] ds. 
	\end{align*}
	Thus, since
	\begin{align*}
		d (e^{c t} \mathcal{W}_{\rho_2}(X_t, Y_t) ) 
		\leq M e^{c t} E[\{ U(X_t, Y_t) + 2 \kappa f(R_2) \| Y_t \|_{\R^d} \}\| Y_t - Y_{\lfloor t / \eta \rfloor  \eta} \|_{\R^d}] dt,
	\end{align*}
	we obtain 
	\begin{align*}
		\mathcal{W}_{\rho_2}(X_t, Y_t)
		&\leq e^{- c t} E[\rho_2(X_0, Y_0)]
		+ M e^{- c t} \int_0^t e^{c s} E[\{ U(X_s, Y_s) + 2 \kappa f(R_2) \| Y_s \|_{\R^d} \} \| Y_s - Y_{\lfloor s / \eta \rfloor  \eta} \|_{\R^d}] ds \\
		&\leq e^{- c t} E[\rho_2(X_0, Y_0)]
		+ M \sup_{u \geq 0} E[\{ U(X_s, Y_s) + 2 \kappa f(R_2) \| Y_s \|_{\R^d} \}^2]^{1/2} e^{- c t} \int_0^t e^{c s} E[\| Y_s - Y_{\lfloor s / \eta \rfloor  \eta} \|_{\R^d}^2]^{1/2} ds. 
	\end{align*}
	Lemmas \ref{Lp_bound_SGD} and \ref{OneStepBound} complete the proof. 
\end{proof}

\subsection{Proof of Theorem \ref{Main_Theorem_SGLD} (2)}

By Lemma \ref{Bound_By_Rho}, we have
\begin{align*}
	| E[L(X_t^{(n, \eta)})] - E[L(X_t^{(n)})] | 
	\leq O_{m, b, M, \beta, A, d} (\mathcal{W}_{\rho_2}(X_t^{(n, \eta)}, X_t^{(n)}) ).
\end{align*}
Thus, applying Proposition \ref{Main_Theorem_StepSizeBound} to $F = L_n$, we obtain the desired result. 
\qed

\section{Bounds on the Mini-batch Error for Stochastic Gradient Langevin Dynamics}
\label{SEC_Third_Appli}

In this section, we prove Theorem \ref{Main_Theorem_SGLD} (3) as the third application of Theorem \ref{Main_Theorem_Weak_Conv}. 
As in the previous section, it is important that Theorem \ref{Main_Theorem_Weak_Conv} admits the case of $G$ is a functional. 

In the following, we fix $\eta > 0$ and for simplification, abbreviate $X^{(n, \eta, B)}$ and $X^{(n, \eta)}$ defined by (\ref{SGLD_along_Ln_Batch}) and (\ref{SGLD_along_Ln}) as $X$ and $Y$, respectively.
Then, we define the ARC between $X$ and $Y$ by
\begin{align}
	\label{BatchSize_Approximate_Reflection_Coupling}
	\begin{cases}
		dY_t^{(\varepsilon)} 
		= - \nabla L_n(Y_{\lfloor t / \eta \rfloor  \eta}^{(\varepsilon)}) dt + \sqrt{2 / \beta} (I_d - 2 h_\varepsilon( \| X_t - Y_t^{(\varepsilon)} \|_{\R^d} ) e_t^{(\varepsilon)} {e_t^{(\varepsilon)}}^\top) d W_t, \\
		Y_t^{(\varepsilon)} = X_0. 
	\end{cases}
\end{align}

\subsection{Uniform bound on the time for the difference between (\ref{SGLD_along_Ln_Batch}) and (\ref{SGLD_along_Ln})}

Proposition \ref{Main_Theorem_BatchSizeBound} below gives a uniform evaluation on the time to the mini batch error of (\ref{SGLD_along_Ln}) with order $\sqrt{(n - B)/ B (n-1)}$. 
As in the previous section, we only explain the important difference between the proof of the following Proposition \ref{Main_Theorem_BatchSizeBound} and that of Proposition \ref{Main_Theorem_GenBound} in the following proof. 


\begin{prop}
	\label{Main_Theorem_BatchSizeBound}
	Under Assumption \ref{Assum_Main2}, there exists some $\eta_0 = O_{m, M}(1)$ and the following inequality holds uniformly on $0 < \eta \leq \eta_0$.
	\begin{align}
		\mathcal{W}_{\rho_2}(X_t, Y_t)
		&\leq O_{\alpha_0} ( \eta^{1/2} + \sqrt{(n - B) / B (n - 1)} ).
	\end{align}
\end{prop}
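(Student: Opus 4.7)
The plan is to adapt the three-step scheme from the proofs of Propositions \ref{Main_Theorem_GenBound} and \ref{Main_Theorem_StepSizeBound} to the ARC defined by (\ref{BatchSize_Approximate_Reflection_Coupling}). Set $Z_t^{(\varepsilon)} = X_t - Y_t^{(\varepsilon)}$. Writing $k = \lfloor t/\eta \rfloor$, the relevant drift difference appearing inside $\langle e_t^{(\varepsilon)}, \cdot \rangle_{\R^d}$ is
\begin{align*}
	-\nabla L_{n, k}(X_{k\eta}) + \nabla L_n(Y_{k\eta}^{(\varepsilon)})
	&= \bigl(\nabla L_n(X_{k\eta}) - \nabla L_{n, k}(X_{k\eta})\bigr)
	+ \bigl(\nabla L_n(Y_{k\eta}^{(\varepsilon)}) - \nabla L_n(X_{k\eta})\bigr).
\end{align*}
The second piece is controlled by $M$-smoothness in terms of $\|Z_t^{(\varepsilon)}\|_{\R^d}$ together with the one-step increments $\|X_t - X_{k\eta}\|_{\R^d}$ and $\|Y_t^{(\varepsilon)} - Y_{k\eta}^{(\varepsilon)}\|_{\R^d}$; the first piece is the mini-batch noise, which is what distinguishes this proposition from Proposition \ref{Main_Theorem_StepSizeBound}.

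Step 1 then produces an $\text{It\^o}$ inequality for $f(\|Z_t^{(\varepsilon)}\|_{\R^d})$ identical to the one in Proposition \ref{Main_Theorem_StepSizeBound} plus the additive term $\|\nabla L_n(X_{k\eta}) - \nabla L_{n, k}(X_{k\eta})\|_{\R^d}\, dt$. Step 2 for $U(X_t, Y_t^{(\varepsilon)})$ inherits the correction terms of Proposition \ref{Main_Theorem_StepSizeBound} for both marginals (since $X$ is discretized as well), plus an extra $2\kappa \|X_t\|_{\R^d} \|\nabla L_n(X_{k\eta}) - \nabla L_{n, k}(X_{k\eta})\|_{\R^d}\, dt$. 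Combining via the product rule (\ref{Ito_Formula_Rho}), taking expectation, letting $\varepsilon \to 0$ using Theorem \ref{Main_Theorem_Weak_Conv} and Lemma \ref{Vanish_Occupation_Time}, and applying Gronwall as before yields, with $c$ as in Proposition \ref{Main_Theorem_GenBound},
\begin{align*}
	\mathcal{W}_{\rho_2}(X_t, Y_t)
	&\leq O_{\alpha_0}\!\left(\sup_{s \geq 0} E[\|X_s - X_{\lfloor s/\eta\rfloor \eta}\|_{\R^d}^2]^{1/2}
	+ \sup_{s \geq 0} E[\|Y_s^{(\varepsilon)} - Y_{\lfloor s/\eta\rfloor \eta}^{(\varepsilon)}\|_{\R^d}^2]^{1/2}\right) \\
	&\quad + O_{\alpha_0}\!\left(\sup_{k \geq 0} E\bigl[\|\nabla L_n(X_{k\eta}) - \nabla L_{n, k}(X_{k\eta})\|_{\R^d}^2\bigr]^{1/2}\right),
\end{align*}
where the $L^p$ bounds on $X$ and $Y^{(\varepsilon)}$ coming from Lemma \ref{Lp_bound_SGD} are used to absorb the factors $U(X_s, Y_s)$, $\|X_s\|_{\R^d}$ and $\|Y_s\|_{\R^d}$ into the constant.

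The first two suprema are the one-step discretization errors, each of order $\eta^{1/2}$ by Lemma \ref{OneStepBound} (applied for both the full-batch and mini-batch dynamics; the $L^p$ bound on the latter is routine under Assumption \ref{Assum_Main2}). The third supremum is the main new ingredient. Conditioning on $\F_{k\eta}$ and using that $I_k$ is a size-$B$ uniform random subset of $\{1,\dots,n\}$ independent of $\F_{k\eta}$, the classical sampling-without-replacement variance identity gives
\begin{align*}
	E\bigl[\|\nabla L_{n, k}(w) - \nabla L_n(w)\|_{\R^d}^2 \bigm| w\bigr]
	= \frac{n - B}{B(n - 1)} \cdot \frac{1}{n}\sum_{i=1}^n \|\nabla \ell(w; z_i) - \nabla L_n(w)\|_{\R^d}^2,
\end{align*}
and under the $M$-smoothness and $\sup_z\|\nabla \ell(0; z)\|_{\R^d} \leq A$ hypotheses of Assumption \ref{Assum_Main2}, the inner sum is bounded by $O_{M, A}(1 + \|w\|_{\R^d}^2)$, whence the third term is $O_{\alpha_0}(\sqrt{(n-B)/B(n-1)})$ thanks to the uniform-in-$k$ control of $E[\|X_{k\eta}\|_{\R^d}^2]$.

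The main obstacle is establishing the uniform-in-time $L^4$ moment bound on $X = X^{(n,\eta,B)}$ that is needed both to invoke Lemma \ref{OneStepBound} in the mini-batch setting and to close the variance computation; this requires checking that the dissipativity of $\nabla L_n$ is preserved (in expectation over $I_k$) by $\nabla L_{n, k}$, and that $\eta_0 = O_{m, M}(1)$ can be chosen small enough so that the discrete-time recursion stays contractive for second moments. Once these ingredients are in place, substituting the three bounds above into the Gronwall estimate yields the claimed order $O_{\alpha_0}(\eta^{1/2} + \sqrt{(n-B)/B(n-1)})$.
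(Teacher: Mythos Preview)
Your proposal is correct and follows essentially the same three-step ARC scheme as the paper. The only noteworthy difference is a harmless choice of decomposition: you evaluate the mini-batch noise $\|\nabla L_n - \nabla L_{n,k}\|$ at $X_{k\eta}$, which forces an extra mini-batch term into Step~2, whereas the paper routes the decomposition through $Y_t^{(\varepsilon)}$ and uses the pointwise $(m,b)$-dissipativity of each $L_{n,k}$ so that Step~2 carries only the two discretization corrections; either choice leads to the same final order. Finally, what you flag as ``the main obstacle'' (the uniform $L^4$ bound on $X^{(n,\eta,B)}$) is already supplied by Lemma~\ref{Lp_bound_SGD} applied with $F_k = L_{n,k}$: since every $\ell(\cdot;z)$ is $(m,b)$-dissipative and $M$-smooth, so is each average $L_{n,k}$ \emph{pointwise}, not merely in expectation over $I_k$, and hence no additional argument is needed.
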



\begin{proof}
	Fix $\varepsilon > 0$ and let $Z_t^{(\varepsilon)} = X_t - Y_t^{(\varepsilon)}$ for $X_t$ and $Y_t^{(\varepsilon)}$ defined by (\ref{BatchSize_Approximate_Reflection_Coupling}). \\
	
	
	\noindent {\bf \underline{Step 1} }\ Evaluation of $f(\| Z_t^{(\varepsilon)} \|_{\R^d})$. \\
	
	\noindent In this case, (\ref{M_smooth_Bound}) is replaced by 
	\begin{align*}
		&\langle e_t^{(\varepsilon)}, \nabla L_{n, \lfloor t / \eta \rfloor}(X_{\lfloor t / \eta \rfloor  \eta}) - \nabla L_n(Y_{\lfloor t / \eta \rfloor  \eta}^{(\varepsilon)}) \rangle_{\R^d} \\
		&\quad\leq M \| X_t - X_{\lfloor t / \eta \rfloor  \eta} \|_{\R^d}
		+ M \| Z_t^{(\varepsilon)} \|_{\R^d} 
		+ \| \nabla L_n(Y_t^{(\varepsilon)}) - \nabla L_{n, \lfloor t / \eta \rfloor}(Y_t^{(\varepsilon)}) \|_{\R^d}
		+ M \| Y_t^{(\varepsilon)} - Y_{\lfloor t / \eta \rfloor  \eta}^{(\varepsilon)} \|_{\R^d}.
	\end{align*}
	Thus, $f(\| Z_t^{(\varepsilon)} \|_{\R^d})$ is a semi-martingale that satisfies
	\begin{align*}
		d f(\| Z_t^{(\varepsilon)} \|_{\R^d}) 
		&\leq \| \nabla L_n(Y_t^{(\varepsilon)}) - \nabla L_{n, \lfloor t / \eta \rfloor}(Y_t^{(\varepsilon)}) \|_{\R^d} dt 
		+ M \| Z_t^{(\varepsilon)} \|_{\R^d} \{ 1 -  h_\varepsilon( \| Z_t^{(\varepsilon)} \|_{\R^d} )^2 \} dt \\
		&\quad+ M \| Y_t^{(\varepsilon)} - Y_{\lfloor t / \eta \rfloor  \eta}^{(\varepsilon)} \|_{\R^d} dt 
		+ M \| X_t - X_{\lfloor t / \eta \rfloor  \eta} \|_{\R^d} dt \\
		&\quad+ 2 \sqrt{\frac{2}{\beta}} f^\prime(\| Z_t^{(\varepsilon)} \|_{\R^d}) h_\varepsilon(\| Z_t^{(\varepsilon)} \|_{\R^d}) \langle e_t^{(\varepsilon)}, dW_t \rangle_{\R^d} 
		- \frac{8Q(\kappa)}{\beta} f^\prime(\| Z_t^{(\varepsilon)} \|_{\R^d}) h_\varepsilon( \| Z_t^{(\varepsilon)} \|_{\R^d} )^2 dt \\
		&\quad- \frac{\zeta}{\beta} f(\| Z_t^{(\varepsilon)} \|_{\R^d}) \chi_{(0, R_2)}(\| Z_t^{(\varepsilon)} \|_{\R^d}) dt
		- \frac{\xi}{\beta} f(\| Z_t^{(\varepsilon)} \|_{\R^d}) \chi_{(0, R_1)}(\| Z_t^{(\varepsilon)} \|_{\R^d}) dt. 
	\end{align*}
	
	
	\noindent {\bf \underline{Step 2} }\ Evaluation of $U(X_t, Y_t^{(\varepsilon)})$. \\
	
	\noindent In this case, the terms 
	$2 \kappa \langle Y_t^{(\varepsilon)}, \nabla L_n(Y_{\lfloor t / \eta \rfloor  \eta}^{(\varepsilon)}) - \nabla L_n(Y_t^{(\varepsilon)}) \rangle_{\R^d} dt$
	and
	$2 \kappa \langle X_t, \nabla L_{n, \lfloor t / \eta \rfloor}(X_{\lfloor t / \eta \rfloor  \eta}) - \nabla L_{n, \lfloor t / \eta \rfloor}(X_t) \rangle_{\R^d} dt$
	are added to (\ref{Ito_Formula_For_U}).
	Thus, we have
	\begin{align*}
		d U(X_t, Y_t^{(\varepsilon)}) 
		&\leq \left( \frac{\xi}{\beta} U(X_t, Y_t^{(\varepsilon)}) \chi_{(0, R_1]}(\| Z_t^{(\varepsilon)} \|_{\R^d}) -\frac{\lambda}{2} \min\{ 1, 4 C \kappa \} U(X_t, Y_t^{(\varepsilon)}) \chi_{(R_2, \infty)}(\| Z_t^{(\varepsilon)} \|_{\R^d}) \right) dt \\
		&\quad+2 M \kappa \| Y_t^{(\varepsilon)} \|_{\R^d} \| Y_t^{(\varepsilon)} - Y_{\lfloor t / \eta \rfloor  \eta}^{(\varepsilon)} \|_{\R^d} dt 
		+2 M \kappa \| X_t \|_{\R^d} \| X_t - X_{\lfloor t / \eta \rfloor  \eta} \|_{\R^d} dt \\
		&\quad+ \kappa \sqrt{\frac{2}{\beta}} \langle \nabla \bar{V}_2(X_t) + \nabla \bar{V}_2(Y_t^{(\varepsilon)}), dW_t \rangle_{\R^d} 
		-2 \kappa \sqrt{\frac{2}{\beta}} h_\varepsilon( \| Z_t^{(\varepsilon)} \|_{\R^d} ) \langle e_t^{(\varepsilon)}, \nabla \bar{V}_2(Y_t^{(\varepsilon)}) \rangle_{\R^d} \langle e_t^{(\varepsilon)}, dW_t \rangle_{\R^d}.
	\end{align*}
	
	
	\noindent {\bf \underline{Step 3} }\ Evaluation of $\rho_2(X_t, Y_t^{(\varepsilon)})$. \\
	
	\noindent The martingale parts of semi-martingales $f(\| Z_t^{(\varepsilon)} \|_{\R^d})$ and $U(X_t, Y_t^{(\varepsilon)})$ are the same as those in the proof of Proposition \ref{Main_Theorem_GenBound}. 
	Therefore, (\ref{Bound_Quadratic_Variation}) holds. 
	For (\ref{Ito_Formula_Rho}), in this case, 
	\begin{align*}
		\| \nabla L_n(Y_t^{(\varepsilon)}) - \nabla L_{n, \lfloor t / \eta \rfloor}(Y_t^{(\varepsilon)}) \|_{\R^d} dt 
		+ M \| Y_t^{(\varepsilon)} - Y_{\lfloor t / \eta \rfloor  \eta}^{(\varepsilon)} \|_{\R^d} dt 
		+ M \| X_t - X_{\lfloor t / \eta \rfloor  \eta} \|_{\R^d} dt
	\end{align*}
	and 
	$2 M \kappa \| Y_t^{(\varepsilon)} \|_{\R^d} \| Y_t^{(\varepsilon)} - Y_{\lfloor t / \eta \rfloor  \eta}^{(\varepsilon)} \|_{\R^d} dt 
	+2 M \kappa \| X_t \|_{\R^d} \| X_t - X_{\lfloor t / \eta \rfloor  \eta} \|_{\R^d} dt$
	are added to the upper bounds of $d f(\| Z_t^{(\varepsilon)} \|_{\R^d})$ and $d U(X_t, Y_t^{(\varepsilon)})$, respectively. 
	As a result, there exists a martingale $M^{(\varepsilon)}$ such that
	\begin{align*}
		&d \rho_2(X_t, Y_t^{(\varepsilon)}) \\
		&\quad\leq - \min \left\{ \frac{\zeta}{\beta}, \frac{\lambda}{2}, 2 C \lambda \kappa \right\} \rho_2(X_t, Y_t^{(\varepsilon)}) dt 
		+ U(X_t, Y_t^{(\varepsilon)}) \| \nabla L_n(Y_t^{(\varepsilon)}) - \nabla L_{n, \lfloor t / \eta \rfloor}(Y_t^{(\varepsilon)}) \|_{\R^d} dt \\
		&\qquad+ M \{ U(X_t, Y_t^{(\varepsilon)}) + 2 \kappa f(\| Z_t^{(\varepsilon)} \|_{\R^d}) \| Y_t^{(\varepsilon)} \|_{\R^d} \} \| Y_t^{(\varepsilon)} - Y_{\lfloor t / \eta \rfloor  \eta}^{(\varepsilon)} \|_{\R^d} dt \\
		&\qquad+ M \{ U(X_t, Y_t^{(\varepsilon)}) + 2 \kappa f(\| Z_t^{(\varepsilon)} \|_{\R^d}) \| X_t \|_{\R^d} \} \| X_t - X_{\lfloor t / \eta \rfloor  \eta} \|_{\R^d} dt  \\
		&\qquad+ M U(X_t, Y_t^{(\varepsilon)}) \| Z_t^{(\varepsilon)} \|_{\R^d} \{1 -  h_\varepsilon( \| Z_t^{(\varepsilon)} \|_{\R^d} )^2\} dt 
		+ \frac{4 \kappa}{\beta} \{ 1 - h_\varepsilon( \| Z_t^{(\varepsilon)} \|_{\R^d} ) \} h_\varepsilon(\| Z_t^{(\varepsilon)} \|_{\R^d}) \langle e_t^{(\varepsilon)}, \nabla \bar{V}_2(X_t) + \nabla \bar{V}_2(Y_t^{(\varepsilon)}) \rangle_{\R^d} dt \\
		&\qquad + d M_t^{(\varepsilon)}. 
	\end{align*}
	Hence, as in the proof of Proposition \ref{Main_Theorem_GenBound}, we obtain
	\begin{align*}
		\mathcal{W}_{\rho_2}(X_t, Y_t)
		&\leq - c \int_0^t \mathcal{W}_{\rho_2}(X_s, Y_s) ds 
		+ \int_0^t E[U(X_s, Y_s) \| \nabla L_n(Y_{\lfloor s / \eta \rfloor  \eta}) - \nabla L_{n, \lfloor s / \eta \rfloor}(Y_{\lfloor s / \eta \rfloor  \eta}) \|_{\R^d}] ds \\
		&\quad + M \int_0^t E[\{ U(X_s, Y_s) + 2 \kappa f(R_2) (\| Y_s \|_{\R^d} + \| X_s \|_{\R^d}) \} (\| Y_s - Y_{\lfloor s / \eta \rfloor  \eta} \|_{\R^d} 
		+ \| X_s - X_{\lfloor s / \eta \rfloor \eta} \|_{\R^d} )] ds.
	\end{align*}
	Thus, since  
	\begin{align*}
		d (e^{c t} \mathcal{W}_{\rho_2}(X_t, Y_t) ) 
		&\leq e^{c t} E[U(X_t, Y_t) \| \nabla L_n(Y_{\lfloor t / \eta \rfloor \eta}) - \nabla L_{n, \lfloor t / \eta \rfloor}(Y_{\lfloor t / \eta \rfloor  \eta}) \|_{\R^d}] dt \\
		&\quad+ M e^{c t} E[\{ U(X_t, Y_t) + 2 \kappa f(R_2) (\| Y_t \|_{\R^d} + \| X_t \|_{\R^d}) \} (\| Y_t - Y_{\lfloor t / \eta \rfloor \eta} \|_{\R^d} 
		+ \| X_t - X_{\lfloor t / \eta \rfloor \eta} \|_{\R^d} )] dt
	\end{align*}
	and $M \sup_{u \geq 0} E[\{ U(X_u, Y_u) + 2 \kappa f(R_2) (\| Y_u \|_{\R^d} + \| X_u \|_{\R^d}) \}^2]^{1/2} = O_{\alpha_0} (1)$ holds by Lemma \ref{Lp_bound_SGD}, we obtain 
	\begin{align*}
		\mathcal{W}_{\rho_2}(X_t, Y_t) 
		&\leq O_{\alpha_0} \left( e^{-c t} \int_0^t e^{ c s} E[\| \nabla L_n(Y_{\lfloor s / \eta \rfloor  \eta}) - \nabla L_{n, \lfloor s / \eta \rfloor}(Y_{\lfloor s / \eta \rfloor  \eta}) \|_{\R^d}^2]^{1/2} ds \right. \\
		&\quad+ \left. e^{-c t} \int_0^t e^{ c s} \{ E[\| Y_s - Y_{\lfloor s / \eta \rfloor  \eta} \|_{\R^d}^2]^{1/2} + E[\| X_s - X_{\lfloor s / \eta \rfloor \eta} \|_{\R^d}^2]^{1/2} \} ds \right). 
	\end{align*}
	Lemmas \ref{OneStepBound} and \ref{OneStepBatchBound} complete the proof. 
\end{proof}

\subsection{Proof of Theorem \ref{Main_Theorem_SGLD} (3)} 

By Lemma \ref{Bound_By_Rho}, we have
\begin{align*}
	| E[L(X_t^{(n, \eta, B)})] - E[L(X_t^{(n, \eta)})] | 
	\leq O_{m, b, M, \beta, A, d} (\mathcal{W}_{\rho_2}(X_t^{(n, \eta)}, X_t^{(n, \eta, B)}) ). 
\end{align*}
Thus, Proposition \ref{Main_Theorem_BatchSizeBound} proves Theorem \ref{Main_Theorem_SGLD} (3). 
\qed

\appendix
\renewcommand{\thesection}{\Alph{section}}
\section{Appendix}

\subsection{Lemmas on weak convergence}

\begin{lem}
	\label{WeakConv_TimeAve}
	Let a continuous function $H : \R^d \to \R$ satisfy $|H(x)| \leq K(1 + \| x \|_{\R^d})^p$ for $p > 0$ and $K > 0$. 
	If a $\R^d$-valued process $Y_t$ satisfies $\sup_{t \geq 0} E[\| Y_t \|_{\R^d}^q] < \infty$ for some $q > p$ and converges weakly to $Y$ as $t \to\infty$, then
	\[
	\lim_{t \to \infty} c e^{- ct} \int_0^t e^{c s} E[H(Y_s)] ds 
	= E[H(Y)]
	\]
	holds for all $c > 0$. 
\end{lem}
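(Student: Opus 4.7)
The plan is to reduce the claim to two essentially separate facts: (i) the integrand $f(s) \coloneqq E[H(Y_s)]$ has the limit $E[H(Y)]$ as $s \to \infty$, and (ii) the exponential weighting $c e^{-ct} \int_0^t e^{cs} f(s) ds$ preserves that limit whenever $f$ is bounded. Both ingredients are standard, but each requires a small justification using the hypotheses.

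For step (i) I would first observe that the growth bound together with $\sup_t E[\|Y_t\|_{\R^d}^q] < \infty$ gives boundedness of $E[|H(Y_s)|^{q/p}]$ uniformly in $s$, since $|H(Y_s)|^{q/p} \leq K^{q/p} (1+\|Y_s\|_{\R^d})^q$. Because $q/p > 1$, the family $\{H(Y_s)\}_{s \geq 0}$ is uniformly integrable. Combined with the weak convergence $Y_s \Rightarrow Y$ and continuity of $H$, the continuous mapping theorem plus uniform integrability (e.g.\ Theorem 3.5 in Billingsley's \emph{Convergence of Probability Measures}) yield $f(s) \to E[H(Y)]$. In particular, $Y$ itself has a finite moment of order $q$, so the right-hand side is well defined, and $f$ is bounded on $[0,\infty)$ by some constant $M$.

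Step (ii) is an elementary Abelian-type lemma. Write $L \coloneqq E[H(Y)]$ and, given $\varepsilon > 0$, choose $T$ so that $|f(s) - L| \leq \varepsilon$ for $s \geq T$. Decompose
\begin{align*}
c e^{-ct} \int_0^t e^{cs} f(s) ds - L
&= c e^{-ct} \int_0^T e^{cs} (f(s) - L) ds + c e^{-ct} \int_T^t e^{cs} (f(s) - L) ds - L e^{-ct}.
\end{align*}
The first term is bounded in absolute value by $(M + |L|) e^{-c(t-T)}$, which vanishes as $t \to \infty$. The second is bounded by $\varepsilon \cdot c e^{-ct} \int_T^t e^{cs} ds \leq \varepsilon$. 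The third tends to $0$. Since $\varepsilon$ is arbitrary, the claim follows.

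The main obstacle is really just step (i): one must be careful that weak convergence alone does not give convergence of $E[H(Y_s)]$ when $H$ is unbounded, which is precisely why the strict inequality $q > p$ is needed to obtain uniform integrability of $\{H(Y_s)\}$. Once this is secured, step (ii) is a routine bounded-weighted-average computation and no further technical difficulty arises.
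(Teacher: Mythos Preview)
Your proof is correct and follows essentially the same approach as the paper: use the moment condition $q>p$ to control the tails of $H(Y_s)$ (you do this via uniform integrability, the paper via an explicit cutoff $H_R = H\cdot\phi_R$, which amounts to the same thing), and then perform the identical Abelian decomposition of $c e^{-ct}\int_0^t e^{cs}f(s)\,ds - L$ into a vanishing piece on $[0,T]$ and an $\varepsilon$-small piece on $[T,t]$. The only cosmetic difference is that the paper first reduces $H$ to a bounded function and then invokes weak convergence, whereas you first establish $f(s)\to L$ and then apply the averaging lemma to the bounded scalar function $f$.
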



\begin{proof}
	Fix an arbitrary $\varepsilon > 0$. 
	By the assumption, we can take $R > 0$ so that 
	\[
	E[|H(Y_t)|; \| Y_t \|_{\R^d} \geq R] 
	\leq \frac{K E[(1 + \| Y_t \|_{\R^d})^q]}{R^{q-p}} 
	\leq \varepsilon
	\]
	holds. 
	For this $R$, if we define
	\[
	\phi_R(a)
	= \begin{cases}
		1, & a \leq R, \\
		1 - (a - R), & R \leq a \leq R+1, \\
		0, & a \geq R+1
	\end{cases} 
	\]
	and $H_R(x) = H(x) \phi_R(\| x \|_{\R^d})$, then we have $|H(x) - H_R(x)| = |H(x)| (1 - \phi_R(x)) \leq  |H(x)| \chi_{\{ \| x \|_{\R^d} \geq R \}}$. 
	In particular, $|E[H(Y_t)] - E[H_R(Y_t)]| \leq E[|H(Y_t)|; \| Y_t \|_{\R^d} \geq R] \leq \varepsilon$ holds for all $t \geq 0$. 
	Therefore, we only have to handle the case of $H$ is bounded. 
	
	Fix arbitrarily $\delta > 0$. 
	Since $Y_t$ converges weakly to $Y$ and $H$ is bounded and continuous, we can take $T > 0$ so that $|E[H(Y_t)] - E[H(Y)]| \leq \delta$ holds for all $t \geq T$. 
	Thus, if $t \geq T$, then
	\begin{align*}
		\left| c e^{- ct} \int_0^t e^{c s} E[H(Y_s)] ds - E[H(Y)] \right| 
		&\leq c e^{- ct} \int_0^t e^{c s} | E[H(Y_s)] - E[H(Y)] | ds + e^{- ct} \sup_{x \in \R^d} |H(x)| \\
		& \leq c e^{- ct} \int_0^T e^{c s} |E[H(Y_s)] - E[H(Y)]| ds + c e^{- ct} \int_T^t e^{c s} \delta ds + e^{- ct} \sup_{x \in \R^d} |H(x)| \\
		&\leq 3 e^{- c(t - T)} \sup_{x \in \R^d} |H(x)| + \delta
	\end{align*}
	holds.  
\end{proof}


As in the proof of Lemma \ref{WeakConv_TimeAve}, we can prove the following by reducing the proof to the case of $H(t, \cdot)$ is bounded. 

\begin{lem}
	\label{WeakConv_Non_Bdd}
	For a functional $H : [0, \infty) \times C([0, \infty); \R^d) \to \R$, assume that $H(t, \cdot)$ is continuous and satisfies (\ref{Linear_Growth_Functional}) for all $t \geq 0$. 
	Furthermore, for a sequence of $C([0, \infty); \R^d)$-valued random variables $Z^{(n)}$, assume that for each $t \geq 0$, there exists some $p(t) >1$ such that $\sup_{n \in \N} E[\sup_{0 \leq s \leq t} \| Z_s^{(n)} \|_{\R^d}^{p(t)}] < \infty$. 
	If $Z^{(n)}$ converges weakly to $Z$ as $n \to \infty$, then we have $\lim_{n \to \infty} E[H(t, Z^{(n)})] = E[H(t, Z)]$ for all $t \geq 0$. 
\end{lem}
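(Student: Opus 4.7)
The plan is to mimic the truncation strategy used in the proof of Lemma \ref{WeakConv_TimeAve}: reduce to the case where $H(t, \cdot)$ is bounded and continuous, in which case the conclusion follows immediately from the definition of weak convergence. The only genuinely new feature is that $H(t, \cdot)$ is a path functional rather than a function of a single point, but every step goes through essentially verbatim once one notices that the map $\varphi \mapsto \sup_{0 \leq u \leq t} \|\varphi(u)\|_{\R^d}$ is continuous on $C([0, \infty); \R^d)$ equipped with the topology of uniform convergence on compacts.

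Fix $t \geq 0$ and $\varepsilon > 0$. First I would use the growth condition (\ref{Linear_Growth_Functional}) together with H\"older's inequality and the uniform bound $\sup_n E[\sup_{0 \leq s \leq t} \|Z^{(n)}_s\|_{\R^d}^{p(t)}] < \infty$ to choose $R > 0$ large enough that
\[
\sup_n E\bigl[|H(t, Z^{(n)})|\, \chi_{\{\sup_{0 \leq u \leq t} \|Z^{(n)}_u\|_{\R^d} \geq R\}}\bigr] \leq \varepsilon,
\]
the tail being controlled since $p(t) > 1$ gives uniform integrability of the dominating envelope $K(t)(1 + \sup_{0 \leq u \leq t} \|Z^{(n)}_u\|_{\R^d})$. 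To obtain the same bound with $Z^{(n)}$ replaced by the weak limit $Z$, I would invoke the Portmanteau theorem: since $\varphi \mapsto \sup_{0 \leq u \leq t} \|\varphi(u)\|_{\R^d}^{p(t)}$ is a nonnegative continuous functional on $C([0, \infty); \R^d)$, its expectation is lower semicontinuous under weak convergence, so $E[\sup_{0 \leq u \leq t} \|Z_u\|_{\R^d}^{p(t)}] \leq \liminf_n E[\sup_{0 \leq u \leq t} \|Z^{(n)}_u\|_{\R^d}^{p(t)}] < \infty$, and the corresponding tail estimate for $Z$ at the same $R$ follows.

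Next, with $\phi_R$ defined as in the proof of Lemma \ref{WeakConv_TimeAve}, I set
\[
H_R(t, \varphi) \coloneqq H(t, \varphi)\, \phi_R\bigl(\sup_{0 \leq u \leq t} \|\varphi(u)\|_{\R^d}\bigr).
\]
Because $\phi_R$ vanishes outside $\{a \leq R + 1\}$, the growth condition gives $|H_R(t, \varphi)| \leq K(t)(R + 2)$, and $H_R(t, \cdot)$ is continuous on $C([0, \infty); \R^d)$ as a product of continuous functions of $\varphi$. Hence the weak convergence $Z^{(n)} \to Z$ directly yields $E[H_R(t, Z^{(n)})] \to E[H_R(t, Z)]$. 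Combining this with the two tail estimates through the triangle inequality gives $|E[H(t, Z^{(n)})] - E[H(t, Z)]| \leq 3\varepsilon$ for all sufficiently large $n$, and since $\varepsilon$ is arbitrary the lemma follows.

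I do not anticipate any substantive obstacle; the only point of care is the transfer of the uniform moment bound from the sequence $Z^{(n)}$ to the weak limit $Z$, which is handled cleanly by the continuity, and hence lower semicontinuity, of the supremum functional together with the Portmanteau theorem.
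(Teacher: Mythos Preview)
Your proposal is correct and follows precisely the route the paper indicates: the paper's entire proof is the one-line remark that Lemma~\ref{WeakConv_Non_Bdd} is proved ``as in the proof of Lemma~\ref{WeakConv_TimeAve} \dots\ by reducing the proof to the case of $H(t,\cdot)$ is bounded,'' and you have faithfully carried out that truncation argument, including the one extra detail the paper leaves implicit---transferring the moment bound to the weak limit $Z$ via lower semicontinuity.
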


\subsection{Lemmas on Langevin Dynamics}

\begin{lem}
	\label{p-th_Lyap}
	Let $H \in C^1(\R^d; \R)$ be $(m, b)$-dissipative and let  
	\begin{align}
		\label{Infinitesimal Generator}
		\mathcal{L}_H f(x)
		\coloneqq - \langle \nabla H(x), \nabla f(x) \rangle_{\R^d} + \frac{1}{\beta} \Delta f(x),\qquad f \in C^2(\R^d; \R). 
	\end{align}
	Then, for any $p \geq 2$ and $x \in \R^d$, 
	\[
	\mathcal{L}_H V_p(x)
	\leq C(p) - \lambda(p) V_p(x)
	\]
	holds, where 
	\[
	L(p) 
	= \left\{ \frac{2}{m} \left( \frac{d + p-2}{\beta} + b \right) \right\}^{1/2}
	\]
	and 
	\begin{align}
		\label{p-th_Lyap_Constant}
		\lambda(p) 
		= \frac{mp}{2},\quad 
		C(p) 
		= \lambda(p) L(p)^p.
	\end{align} 
\end{lem}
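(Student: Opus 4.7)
The plan is to reduce the inequality to an elementary one-variable estimate on $r = \|x\|_{\R^d}$ after a direct differentiation of $V_p$, and then close the loop via a two-case argument at the threshold $L(p)$.

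First, I would differentiate $V_p(x) = \|x\|_{\R^d}^p$ explicitly: $\nabla V_p(x) = p\|x\|_{\R^d}^{p-2} x$ and $\Delta V_p(x) = p(d+p-2)\|x\|_{\R^d}^{p-2}$. Substituting into the definition of $\mathcal{L}_H$ gives
\[
\mathcal{L}_H V_p(x) = -p\|x\|_{\R^d}^{p-2}\langle \nabla H(x), x\rangle_{\R^d} + \frac{p(d+p-2)}{\beta}\|x\|_{\R^d}^{p-2}.
\]
Next, I would apply $(m,b)$-dissipativity, $\langle\nabla H(x),x\rangle_{\R^d} \geq m\|x\|_{\R^d}^2 - b$, to obtain
\[
\mathcal{L}_H V_p(x) \leq -pm\,\|x\|_{\R^d}^p + pK\,\|x\|_{\R^d}^{p-2},\qquad K \coloneqq b + \frac{d+p-2}{\beta},
\]
and note that with this definition of $K$ one has $L(p)^2 = 2K/m$.

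To finish, it suffices to prove $\mathcal{L}_H V_p(x) + \lambda(p) V_p(x) \leq C(p)$, i.e.
\[
-\frac{mp}{2}\,r^p + pK\,r^{p-2} \leq C(p),\qquad r = \|x\|_{\R^d}.
\]
Here I would split at the threshold $r = L(p)$. If $r \geq L(p)$, then $r^2 \geq 2K/m$, so $pK r^{p-2} = pK r^p / r^2 \leq (mp/2) r^p$, making the left-hand side nonpositive. If $r \leq L(p)$, then $r^{p-2} \leq L(p)^{p-2}$ and the negative term is dropped; since $pK = (mp/2)\cdot(2K/m) = \lambda(p) L(p)^2$, the left-hand side is bounded by $\lambda(p)L(p)^2 \cdot L(p)^{p-2} = \lambda(p) L(p)^p = C(p)$. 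Combining the two cases yields the claim.

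There is no real obstacle here; the only mild subtlety is choosing the threshold $L(p)$ so that both cases land on the same bound $C(p)$, which is precisely engineered by the definition $L(p) = \{(2/m)((d+p-2)/\beta + b)\}^{1/2}$ given in the statement.
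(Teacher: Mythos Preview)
Your proof is correct and follows essentially the same approach as the paper: compute $\nabla V_p$ and $\Delta V_p$ explicitly, apply $(m,b)$-dissipativity to bound $\mathcal{L}_H V_p(x) \leq -pm\,r^p + pK\,r^{p-2}$ with $K = b + (d+p-2)/\beta$, and then split into the two cases $r \geq L(p)$ and $r \leq L(p)$. The only cosmetic difference is that you add $\lambda(p)V_p(x)$ to both sides before splitting, whereas the paper carries the $-\lambda(p)V_p(x)$ term through each case separately; the arithmetic is identical.
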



\begin{proof}
	Since $\nabla V_p(x) = p \| x \|_{\R^d}^{p-2} x$ and $\Delta V_p(x) = p (d + p-2) \| x \|_{\R^d}^{p-2}$, by the $(m, b)$-dissipativity of $H$, we have
	\[
	\langle \nabla H(x), \nabla V_p(x) \rangle_{\R^d} 
	= p \| x \|_{\R^d}^{p-2} \langle \nabla H(x), x \rangle_{\R^d} 
	\geq p \| x \|_{\R^d}^{p-2} (m \|x\|_{\R^d}^2 - b).
	\]
	Thus, we obtain
	\begin{align*}
		- \langle \nabla H(x), \nabla V_p(x) \rangle_{\R^d} + \frac{\Delta V_p(x)}{\beta} 
		&\leq - mp \|x\|_{\R^d}^p + bp \| x \|_{\R^d}^{p-2} + \frac{p (d + p-2)}{\beta} \| x \|_{\R^d}^{p-2} \\
		&= - \frac{mp}{2} \|x\|_{\R^d}^p - \left( \frac{p (d + p-2)}{\beta} + bp - \frac{mp}{2} \|x\|_{\R^d}^p \right) \| x \|_{\R^d}^{p-2}.
	\end{align*}
	In particular, if $\| x \|_{\R^d} \geq L(p)$, then $\mathcal{L}_H V_p(x) \leq - \lambda(p) V_p(x)$ holds. 
	On the other hand, if $\| x \|_{\R^d} \leq L(p)$, then 
	\[
	\mathcal{L}_H V_p(x) 
	\leq - \lambda(p) V_p(x) + \left( \frac{p (d + p-2)}{\beta} + bp \right) L(p)^{p-2}
	= C(p) - \lambda(p) V_p(x)
	\]
	holds. 
\end{proof}


\begin{lem}
	\label{Lp_bound_SGLD}
	Let $H \in C^1(\R^d; \R)$ be $M$-smooth and let $p \geq 2$. 
	For a $\{ \F_t \}$-adapted process $\gamma_t$ and $Y_0 \in L^p(\Omega; \R^d)$, there exists uniquely the strong solution $Y^{(\varepsilon)}$ of 
	\begin{align}
		\label{BASE_ApproxCoupling}
		\begin{cases}
			dY_t^{(\varepsilon)} 
			= - \nabla H(Y_t^{(\varepsilon)}) dt + \sqrt{2 / \beta} (I_d - 2 h_\varepsilon( \| \gamma_t - Y_t^{(\varepsilon)} \|_{\R^d} ) e_t^{(\varepsilon)} {e_t^{(\varepsilon)}}^\top) dW_t, & t \geq 0, \\
			Y_0^{(\varepsilon)} = Y_0.
		\end{cases}
	\end{align}
	Here, $e_t^{(\varepsilon)} = (\gamma_t - Y_t^{(\varepsilon)}) / \| \gamma_t - Y_t^{(\varepsilon)} \|_{\R^d}$. 
	Furthermore, if $H$ is $(m, b)$-dissipative, then, for all $t \geq 0$, $Y^{(\varepsilon)}$ satisfies
	\begin{align}
		\label{Lp_Bound_SGLD}
		E[\| Y_t^{(\varepsilon)} \|_{\R^d}^p] 
		\leq e^{-\lambda(p) t} E[\| Y_0 \|_{\R^d}^p] + \frac{C(p)}{\lambda(p)} (1 - e^{-\lambda(p) t}).
	\end{align}
	The same bound as (\ref{Lp_Bound_SGLD}) holds also for the solution of
	\begin{align}
		\label{LSDE_along_H_new}
		dY_t 
		= - \nabla H(Y_t) dt + \sqrt{2 / \beta} W_t,\qquad t \geq 0.
	\end{align}
\end{lem}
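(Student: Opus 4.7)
The existence and uniqueness statement I would handle first, as a short warm-up. The drift $-\nabla H$ is globally Lipschitz by $M$-smoothness, and the diffusion coefficient $\sigma_\varepsilon(t,y) := \sqrt{2/\beta}\,(I_d - 2 h_\varepsilon(\|\gamma_t-y\|) e(y) e(y)^\top)$, with $e(y)=(\gamma_t-y)/\|\gamma_t-y\|$, is Lipschitz in $y$ with a constant depending only on $\varepsilon$ and $\|h_\varepsilon'\|_\infty$. The apparent singularity of $e(y)$ at $y=\gamma_t$ is killed by $h_\varepsilon$, which vanishes identically on $|a|\leq \varepsilon$, so $\sigma_\varepsilon$ extends to a $C^1$ function of $y$ that is in fact bounded by $\sqrt{2/\beta}$ uniformly (the matrix $I_d - 2hee^\top$ has operator norm $\le 1$ whenever $h\in[0,1]$). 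Existence and pathwise uniqueness of a strong solution then follow from standard SDE theory for adapted Lipschitz coefficients.

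For the moment bound, the plan is to apply It{\^o}'s formula to $V_p(Y_t^{(\varepsilon)})$ and reduce the computation to the generator estimate of Lemma \ref{p-th_Lyap}. The key algebraic observation is that with $M_t := I_d - 2h_\varepsilon(\|Z_t^{(\varepsilon)}\|)e_t^{(\varepsilon)}{e_t^{(\varepsilon)}}^\top$,
\begin{align*}
M_t M_t^\top
= I_d + 4\bigl(h_\varepsilon^2 - h_\varepsilon\bigr)(\|Z_t^{(\varepsilon)}\|)\,e_t^{(\varepsilon)}{e_t^{(\varepsilon)}}^\top
\preceq I_d,
\end{align*}
because $h_\varepsilon^2-h_\varepsilon \leq 0$ on $[0,1]$. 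Since $\nabla^2 V_p$ is positive semidefinite for $p\ge 2$, this yields
\begin{align*}
\mathrm{tr}\!\left( (2/\beta) M_t M_t^\top \nabla^2 V_p(Y_t^{(\varepsilon)})\right)
\le (2/\beta)\,\mathrm{tr}\,\nabla^2 V_p(Y_t^{(\varepsilon)})
= (2/\beta)\,p(d+p-2)\,\|Y_t^{(\varepsilon)}\|^{p-2},
\end{align*}
which is exactly the diffusion contribution that appears in the definition of $\mathcal{L}_H V_p$. Combining this with the drift contribution $-\langle \nabla H(Y_t^{(\varepsilon)}), \nabla V_p(Y_t^{(\varepsilon)})\rangle$ and using $(m,b)$-dissipativity exactly as in the proof of Lemma \ref{p-th_Lyap}, I arrive at
\begin{align*}
d V_p(Y_t^{(\varepsilon)})
\le \bigl( C(p) - \lambda(p) V_p(Y_t^{(\varepsilon)}) \bigr)\,dt + d N_t,
\end{align*}
where $N$ is a local martingale. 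Localizing by $\tau_R := \inf\{t : \|Y_t^{(\varepsilon)}\|\ge R\}$, taking expectation, applying scalar Gronwall, and then sending $R\to\infty$ by Fatou gives the stated bound.

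The main technical annoyance, and the only point requiring care, is that $V_p(x)=\|x\|^p$ is not $C^2$ at the origin when $2\le p <4$. I would resolve this in the standard way by applying It{\^o}'s formula instead to the smoothed Lyapunov function $V_p^\delta(x) := (\|x\|^2+\delta)^{p/2}$, noting that the same matrix-inequality argument gives $\mathrm{tr}(\sigma_\varepsilon\sigma_\varepsilon^\top \nabla^2 V_p^\delta) \le (2/\beta)\,\mathrm{tr}\,\nabla^2 V_p^\delta$ and that the dissipativity step of Lemma \ref{p-th_Lyap} survives uniformly in $\delta$, then passing to the limit $\delta\downarrow 0$ in the resulting inequality. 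For the non-coupled SDE (\ref{LSDE_along_H_new}) the diffusion is just $\sqrt{2/\beta}\,I_d$, so $\sigma\sigma^\top = (2/\beta)I_d$ holds with equality and the same computation (in fact, literally the statement of Lemma \ref{p-th_Lyap}) gives the identical bound.
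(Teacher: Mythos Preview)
Your proposal is correct and follows essentially the same route as the paper: It\^o's formula on $V_p$, the observation that the correction term $-4\beta^{-1}h_\varepsilon(1-h_\varepsilon)\,e^\top(\nabla^2 V_p)e$ is nonpositive by convexity of $V_p$ (which you phrase equivalently as $M_tM_t^\top\preceq I_d$), then Lemma~\ref{p-th_Lyap} and Gronwall. Your added localization step is reasonable extra care the paper omits; the $\delta$-smoothing, however, is not needed, since $\|x\|^p$ is genuinely $C^2$ at the origin for every $p\ge 2$ (the Hessian entries $p(p-2)\|x\|^{p-4}x_ix_j + p\|x\|^{p-2}\delta_{ij}$ are bounded by $O(\|x\|^{p-2})$ and hence continuous there).
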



\begin{proof}
	For each $\varepsilon > 0$, the map $\R^d \ni x \mapsto h_\varepsilon(\| x \|_{\R^d}) ( x / \| x \|_{\R^d} ) ( x / \| x \|_{\R^d} )^\top \in \R^d \otimes \R^d$ is Lipschitz continuous. 
	Thus, (\ref{BASE_ApproxCoupling}) has the unique strong solution $Y_t^{(\varepsilon)}$. 
	Denoting $Z_t^{(\varepsilon)} = \gamma_t - Y_t^{(\varepsilon)}$, Ito's formula yields 
	\begin{align*}
		d V_p(Y_t^{(\varepsilon)}) 
		&= \mathcal{L}_H V(Y_t^{(\varepsilon)}) dt 
		- \frac{4 h_\varepsilon(\| Z_t^{(\varepsilon)} \|_{\R^d}) (1 - h_\varepsilon(\| Z_t^{(\varepsilon)} \|_{\R^d}))}{\beta} \sum_{i, j =1}^d e_{i, t}^{(\varepsilon)} e_{j, t}^{(\varepsilon)} \partial_{i j}^2 V_p(Y_t^{(\varepsilon)}) dt \\
		&\quad+ \sqrt{\frac{2}{\beta}} \langle \nabla V_p(Y_t^{(\varepsilon)}), dW_t \rangle_{\R^d} 
		-2 \sqrt{\frac{2}{\beta}} h_\varepsilon( \| Z_t^{(\varepsilon)} \|_{\R^d}) \langle e_t^{(\varepsilon)}, \nabla V_p(Y_t^{(\varepsilon)}) \rangle_{\R^d} \langle e_t, dW_t \rangle_{\R^d}.
	\end{align*}	
	Since $V_p$ is convex for $p \geq 2$, its Hessian matrix is nonnegative-definite. 
	Thus, by Lemma \ref{p-th_Lyap}, we have
	\begin{align*}
		d V_p(Y_t^{(\varepsilon)}) 
		&\leq \{ C(p) - \lambda(p) V_p(Y_t^{(\varepsilon)}) \} dt 
		+ \sqrt{\frac{2}{\beta}} \langle \nabla V_p(Y_t^{(\varepsilon)}), dW_t \rangle_{\R^d} 
		-2 \sqrt{\frac{2}{\beta}} h_\varepsilon( \| Z_t^{(\varepsilon)} \|_{\R^d}) \langle e_t^{(\varepsilon)}, \nabla V_p(Y_t^{(\varepsilon)}) \rangle_{\R^d} \langle e_t^{(\varepsilon)}, dW_t \rangle_{\R^d}.
	\end{align*}
	Hence, since
	\[
	E[V_p(Y_t^{(\varepsilon)})] 
	\leq E[V_p(Y_0)] + C(p) t - \lambda(p) \int_0^t E[V_p(Y_s^{(\varepsilon)})] ds 
	\]
	holds, by $d ( e^{\lambda(p) t} E[V_p(Y_t^{(\varepsilon)})] ) \leq C(p) e^{\lambda(p) t} dt$, we obtain 
	\[
	e^{\lambda(p) t} E[V_p(Y_t^{(\varepsilon)})] 
	\leq E[V_p(Y_0)] + \frac{C(p)}{\lambda(p)} (e^{\lambda(p) t} -1).
	\]
	The bound for (\ref{LSDE_along_H_new}) can be proved in the same way. 
\end{proof}

The following lemma is an extension of Lemma 3.2 in \citep{Zhang} based on Young's inequality. 

\begin{lem}
	\label{Lp_bound_SGD}
	Assume that $F_k \in C^1(\R^d; \R)$ is $(m, b)$-dissipative and $M$-smooth for each $k$ and satisfies $\sup_{k \in \N} \| \nabla F_k(0) \|_{\R^d} \leq A$. 
	For fixed $\eta > 0$, we define the process $Y$ as
	\begin{align*}
		Y_t 
		= Y_{k \eta} - (t - k \eta) \nabla F_k(Y_{k \eta}) + \sqrt{2 / \beta} (W_t - W_{k \eta}),\qquad k \eta \leq t < (k+1) \eta.
	\end{align*}
	Then, for all $\ell \in \N$, there exists some $\eta_0 = O_{m, M, \ell}(1)$ and the following inequality holds uniformly on $0 < \eta \leq \eta_0$.
	\begin{align*}
		\sup_{t \geq 0} E[\| Y_t \|_{\R^d}^{2 \ell}]
		\leq O_{m, b, M, \beta, A, d, \ell} (1 + E[\| Y_0 \|_{\R^d}^{2 \ell}]).
	\end{align*}
\end{lem}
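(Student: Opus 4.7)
The plan is to establish a one-step Lyapunov recursion on the grid $\{k\eta\}_{k \in \N}$, iterate it, and then interpolate to general $t \in [k\eta, (k+1)\eta)$. Decompose
\[
Y_{(k+1)\eta} = M_k + \xi_k, \qquad M_k := Y_{k\eta} - \eta \nabla F_k(Y_{k\eta}), \qquad \xi_k := \sqrt{2/\beta}\,(W_{(k+1)\eta} - W_{k\eta}),
\]
so $M_k$ is $\F_{k\eta}$-measurable and $\xi_k$ is independent of $\F_{k\eta}$. The $(m,b)$-dissipativity of $F_k$ together with the linear growth bound $\|\nabla F_k(x)\|_{\R^d} \le M\|x\|_{\R^d} + A$ (inherited from $M$-smoothness and $\sup_k \|\nabla F_k(0)\|_{\R^d} \le A$) gives
\[
\|M_k\|_{\R^d}^2 \le (1 - 2m\eta + 2M^2 \eta^2)\|Y_{k\eta}\|_{\R^d}^2 + 2b\eta + 2A^2 \eta^2.
\]
For $\eta \le \eta_0$ with $\eta_0 = O_{m, M}(1)$ small enough, the prefactor is $\le 1 - m\eta$, hence $\|M_k\|_{\R^d}^2 \le (1 - m\eta)\|Y_{k\eta}\|_{\R^d}^2 + C_1 \eta$ with $C_1 = C_1(m, b, M, A)$.

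Next I would expand $\|Y_{(k+1)\eta}\|_{\R^d}^{2\ell} = (\|M_k\|_{\R^d}^2 + 2\langle M_k, \xi_k\rangle_{\R^d} + \|\xi_k\|_{\R^d}^2)^\ell$ by the multinomial theorem and take $E[\,\cdot \mid \F_{k\eta}]$. Odd-degree Gaussian factors vanish, while every surviving mixed term of multi-index $(i, j, n)$ with $i + j + n = \ell$ and $i < \ell$ contributes at most $\|M_k\|_{\R^d}^{2i + j}\,\eta^{j/2 + n}$ up to universal constants. Young's inequality with exponents $p = 2\ell/(2i + j)$, $q = 2\ell/(j + 2n)$ bounds this by $\delta \|M_k\|_{\R^d}^{2\ell} + C_\delta\, \eta^\ell$, the exponent of $\eta$ being exactly $\ell$ by the identity $(j/2 + n)\,q = \ell$ whenever $i + j + n = \ell$. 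Combining these estimates with the elementary inequality $((1 - m\eta)a + C_1 \eta)^\ell \le (1 + \varepsilon)(1 - m\eta)^\ell a^\ell + C_\varepsilon\, \eta^\ell$ applied to $\|M_k\|_{\R^d}^{2\ell}$, and taking $\varepsilon, \delta$ of order $m\ell\eta$ (shrinking $\eta_0$ if necessary), one reaches
\[
E[\|Y_{(k+1)\eta}\|_{\R^d}^{2\ell} \mid \F_{k\eta}] \le (1 - c\eta)\|Y_{k\eta}\|_{\R^d}^{2\ell} + C_2 \eta
\]
with $c = c(m, \ell) > 0$ and $C_2 = C_2(m, b, M, A, \beta, d, \ell)$.

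Iterating the one-step bound yields $\sup_{k \in \N} E[\|Y_{k\eta}\|_{\R^d}^{2\ell}] \le E[\|Y_0\|_{\R^d}^{2\ell}] + C_2 / c$. For $t \in [k\eta, (k+1)\eta)$, the identity $Y_t - Y_{k\eta} = -(t - k\eta)\nabla F_k(Y_{k\eta}) + \sqrt{2/\beta}(W_t - W_{k\eta})$ together with the linear growth of $\nabla F_k$ and Gaussian moment estimates gives $E[\|Y_t\|_{\R^d}^{2\ell}] = O_{m, M, A, \beta, d, \ell}(1 + E[\|Y_{k\eta}\|_{\R^d}^{2\ell}])$ for $\eta \le 1$, completing the argument. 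The main obstacle is the Step 2 bookkeeping: one must verify that the contraction of order $m\eta$ coming from dissipativity survives the raising to the $\ell$-th power, which requires checking that every Young-type loss is of order $\eta^\ell \le \eta$ (hence dominated by $C_2 \eta$) and that the $O(\eta^2)$ contribution in the prefactor $(1 - 2m\eta + 2M^2 \eta^2)^\ell$ does not overwhelm the $1 - m\ell\eta$ linear contraction — this is precisely what forces $\eta_0$ to depend on $\ell$ in addition to $m$ and $M$.
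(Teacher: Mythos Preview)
Your proposal is correct and follows essentially the same route as the paper: decompose each step into the drift part and the Gaussian increment, expand the $2\ell$-th power, use symmetry to kill odd Gaussian factors, apply Young's inequality to the mixed terms with parameters of order $m\eta$, and extract a one-step contraction from dissipativity that is then iterated. The only organizational differences are that the paper carries out the estimate directly at a generic $t\in[k\eta,(k+1)\eta)$ (thereby getting the grid recursion and the interpolation in one stroke) and first bounds $\langle \Delta,U\rangle^{2r}\le\|\Delta\|^{2r}\|U\|^{2r}$ via Cauchy--Schwarz to reduce the multinomial sum to the simpler family $\|\Delta\|^{2\ell-2j}\|U\|^{2j}$ before invoking Young, whereas you work on the grid and Young the raw multinomial terms; neither change is material.
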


\begin{proof}
	Let $k \eta \leq t < (k+1) \eta$ and define $\Delta_{k, t} = Y_{k \eta} - (t - k \eta) \nabla F_k(Y_{k \eta})$ and $U_{k, t} = \sqrt{2 / \beta} (W_t - W_{k \eta})$. 
	If $s \in \N$ is odd, then we have $E[\langle \Delta_{k, t}, U_{k, t} \rangle_{\R^d}^s \,|\, Y_{k \eta}] = 0$. 
	Thus, there exist constants $a_j$ that depend only on $\ell \in \N$ such that
	\begin{align*}
		E[\| Y_t \|_{\R^d}^{2 \ell} \,|\, Y_{k \eta}] 
		&= E[(\|\Delta_{k, t}\|_{\R^d}^2 + 2 \langle \Delta_{k, t}, U_{k, t} \rangle_{\R^d} + \|U_{k, t}\|_{\R^d}^2)^{\ell} \,|\, Y_{k \eta}] \\
		&\leq \|\Delta_{k, t}\|_{\R^d}^{2 \ell} 
		+ \sum_{j=1}^{\ell} a_j E[\|\Delta_{k, t}\|_{\R^d}^{2 \ell - 2j} \|U_{k, t}\|_{\R^d}^{2 j} \,|\, Y_{k \eta}]. 
	\end{align*}
	By Young's inequality, for any $\varepsilon > 0$ and $1 \leq j \leq \ell -1$, 
	\begin{align*}
		a_j \|\Delta_{k, t}\|_{\R^d}^{2 \ell - 2j} \|U_{k, t}\|_{\R^d}^{2 j}
		= (\varepsilon \|\Delta_{k, t}\|_{\R^d}^{2 \ell - 2j}) (a_j \|U_{k, t}\|_{\R^d}^{2j} / \varepsilon) 
		\leq \frac{\ell - j}{\ell} \varepsilon^{\frac{\ell}{\ell - j}} \|\Delta_{k, t}\|_{\R^d}^{2 \ell} 
		+ \frac{j}{\ell} a_j^{\frac{\ell}{j}} \varepsilon^{- \frac{\ell}{j}} \|U_{k, t}\|_{\R^d}^{2 \ell}
	\end{align*}
	holds. 
	In particular, setting $\varepsilon = \{ \ell^{-1} m (t - k \eta) \}^{\frac{\ell - j}{\ell}}$, we obtain 
	\begin{align*}
		a_j \|\Delta_{k, t}\|_{\R^d}^{2 \ell - 2j} \|U_{k, t}\|_{\R^d}^{2 j}
		\leq \frac{m (t - k \eta)}{\ell} \|\Delta_{k, t}\|_{\R^d}^{2 \ell} 
		+ \frac{a_j^{\frac{\ell}{j}} \ell^{\frac{\ell - j}{j}}}{\{ m ( t - k \eta) \}^{\frac{\ell - j}{j}}} \|U_{k, t}\|_{\R^d}^{2 \ell}. 
	\end{align*}
	Hence, since $E[\|U_{k, t}\|_{\R^d}^{2 \ell}] = O_{\beta, d}((t - k \eta)^{\ell})$ and $\ell - \frac{\ell - j}{j} \geq 1$, for $\eta \leq 1$, there exists a constant $C_1$ independent of $k$ such that
	\begin{align*}
		E[\| Y_t \|_{\R^d}^{2 \ell} \,|\, Y_{k \eta}] 
		\leq \{ 1 +  m ( t - k \eta) \} \|\Delta_{k, t}\|_{\R^d}^{2 \ell} + (t - k \eta) C_1.
	\end{align*}
	
	For the first term, if $\eta \leq 1$, then there exist constants $b_j$ that depend only on $\ell \in \N$ such that
	\begin{align*}
		\|\Delta_{k, t}\|_{\R^d}^{2 \ell}
		&= (\|Y_{k \eta}\|_{\R^d}^2 
		- 2 (t - k \eta) \langle Y_{k \eta}, \nabla F_k(Y_{k \eta}) \rangle_{\R^d} 
		+ (t - k \eta)^2 \| \nabla F_k(Y_{k \eta}) \|_{\R^d}^2 )^{\ell} \\
		&\leq \|Y_{k \eta}\|_{\R^d}^{2 \ell} 
		- 2 \ell (t - k \eta) \|Y_{k \eta}\|_{\R^d}^{2 \ell - 2} \langle Y_{k \eta}, \nabla F_k(Y_{k \eta}) \rangle_{\R^d}
		+ (t- k \eta)^2 \sum_{j=1}^{2 \ell} b_j \|Y_{k \eta}\|_{\R^d}^{2 \ell - j} \| \nabla F_k(Y_{k \eta}) \|_{\R^d}^j. 
	\end{align*}
	Since the map $r \mapsto r^q$ is convex for $q \geq 2$, by $M$-smoothness of $F_k$ and the boundedness $\sup_{k \in \N} \|\nabla F_k(0) \|_{\R^d} \leq A$, we have
	\begin{align*}
		\| \nabla F_k(Y_{k \eta}) \|_{\R^d}^q
		\leq (M \| Y_{k \eta} \|_{\R^d} + A)^q
		\leq 2^{q-1} (M^q \| Y_{k \eta} \|_{\R^d}^q + A^q). 
	\end{align*}
	Thus, by the $(m, b)$-dissipativity of $F_k$, we can find constants $C_2$ and $C_3$ independent of $k$ so that
	\begin{align*}
		\|\Delta_{k, t}\|_{\R^d}^{2 \ell}
		&\leq \{ 1 - 2 \ell m (t - k \eta) + C_2 (t - k \eta)^2 \} \|Y_{k \eta}\|_{\R^d}^{2 \ell} 
		+ b (t - k \eta) \|Y_{k \eta}\|_{\R^d}^{2 \ell - 2} + C_3 (t - k \eta)^2. 
	\end{align*}
	In particular, when $\eta \leq 1$ is sufficiently small for $m$, $M$ and $\ell$, 
	\begin{align*}
		\|\Delta_{k, t}\|_{\R^d}^{2 \ell}
		&\leq \left\{ 1 - \frac{7}{4} \ell m (t - k \eta) \right\} \|Y_{k \eta}\|_{\R^d}^{2 \ell} 
		+ (t - k \eta) \{ b \|Y_{k \eta}\|_{\R^d}^{2 \ell - 2} + C_3 \}
	\end{align*}
	holds. 
	Thus, taking $K > 0$ sufficiently large so that $\frac{1}{4} \ell m K^{2 \ell} - b K^{2 \ell -2} + C_3 \geq 0$, if $\| Y_{k \eta} \|_{\R^d} \geq K$, then we have
	\begin{align}
		\label{leq_L_Case}
		\|\Delta_{k, t}\|_{\R^d}^{2 \ell}
		&\leq \left\{ 1 - \frac{3}{2} m (t - k \eta) \right\} \|Y_{k \eta}\|_{\R^d}^{2 \ell} 
		\leq \left\{ 1 - \frac{3}{2} m (t - k \eta) \right\} \|Y_{k \eta}\|_{\R^d}^{2 \ell} 
		+ (t - k \eta) \{ b K^{2 \ell - 2} + C_3 \}.
	\end{align}
	Since (\ref{leq_L_Case}) holds when $\| Y_{k \eta} \|_{\R^d} \leq K$, (\ref{leq_L_Case}) is always true. 
	Therefore, by
	\[
	\left(1 - \frac{3}{2} m (t - k \eta) \right) (1 + m (t - k \eta)) 
	= 1 - \frac{3}{2} m (t - k \eta) + m (t - k \eta) - \frac{3}{2} m^2 (t - k \eta)^2 
	\leq 1 - \frac{3}{2} m (t - k \eta),
	\]
	we can find a constant $C_4$ independent of $k$ such that
	\begin{align*}
		E[\|Y_t\|_{\R^d}^{2 \ell} \,|\, Y_{k \eta}] 
		&\leq \left(1 - \frac{1}{2} m (t - k \eta) \right) \|Y_{k \eta}\|_{\R^d}^{2 \ell} 
		+ (t - k \eta) C_4.
	\end{align*}
	From the aforementioned, we obtain 
	\begin{align*}
		E[\|Y_t\|_{\R^d}^{2 \ell}] 
		&\leq (1 - 2^{-1} m (t- k \eta)) E[\|Y_{k \eta}\|_{\R^d}^{2 \ell}] + (t- k \eta) C_4 \\
		&\leq (t- k \eta) C_4 + (1 - 2^{-1} m (t- k \eta)) \Big\{ (1 - 2^{-1} m \eta) E[\|Y_{(k-1) \eta}\|_{\R^d}^{2 \ell}] + \eta C_4 \Big\} \\
		&\leq \eta C_4 \{ 1 + (1 - 2^{-1} m (t- k \eta)) \} + (1 - 2^{-1} m (t- k \eta)) (1 - 2^{-1} m \eta) E[\|Y_{(k-1) \eta}\|_{\R^d}^{2 \ell}] \\
		&\leq \eta C_4 \{1 + (1- 2^{-1} m (t- k \eta)) \} + (1 - 2^{-1} m (t- k \eta)) (1 - 2^{-1} m \eta) \Big\{ (1 - 2^{-1} m \eta) E[\|Y_{(k-2) \eta}\|_{\R^d}^{2 \ell}] + \eta C_4 \Big\} \\
		&= \eta C_4 \{1 + (1- 2^{-1} m (t- k \eta)) + (1 - 2^{-1} m (t- k \eta)) (1 - 2^{-1} m \eta) \} + (1 - 2^{-1} m (t- k \eta)) (1 - 2^{-1} m \eta)^2 E[\|Y_{(k-3) \eta}\|_{\R^d}^{2 \ell}] \\
		&\leq \cdots \\
		&\leq \eta C_4 \left\{1 + (1- 2^{-1} m (t- k \eta)) \sum_{j=1}^{k-1} (1 - 2^{-1} m \eta)^j \right\} + (1 - 2^{-1} m (t- k \eta)) (1 - 2^{-1} m \eta)^k E[\|Y_0\|_{\R^d}^{2 \ell}] \\
		&\leq \eta C_4 \sum_{j=0}^{k-1} (1 - 2^{-1} m \eta)^j + (1 - 2^{-1} m (t- k \eta)) (1 - 2^{-1} m \eta)^k E[\|Y_0\|_{\R^d}^{2 \ell}] \\
		&\leq \eta C_4 \frac{1}{1 - (1 - 2^{-1} m \eta)} + (1 - 2^{-1} m (t- k \eta)) (1 - 2^{-1} m \eta)^k E[\|Y_0\|_{\R^d}^{2 \ell}] \\
		&= \frac{2C_4}{m} + (1 - 2^{-1} m (t- k \eta)) (1 - 2^{-1} m \eta)^k E[\|Y_0\|_{\R^d}^{2 \ell}].
	\end{align*}
	Thus, $E[\|Y_t\|_{\R^d}^{4 \ell}] \leq 2 C_4 / m + E[\|Y_0\|_{\R^d}^{2 \ell}]$ holds for all $t > 0$. 
\end{proof}


\begin{lem}
	\label{OneStepBound}
	Let $H : [0, \infty) \times \R^d \to \R$ and let $H(t, \cdot) \in C^1(\R^d; \R)$ be $M$-smooth for each $t \geq 0$. 
	For fixed $\eta > 0$, if we define $Y$ by 
	\begin{align}
		\label{LSDE_alongH}
		dY_t
		= - \nabla_x H(\lfloor t / \eta \rfloor \eta, Y_{\lfloor t / \eta \rfloor \eta}) dt + \sqrt{2 / \beta} dW_t, \quad t \geq 0,
	\end{align}
	then, for each $t \geq 0$, we have
	\[
	E[\| Y_t - Y_{\lfloor t / \eta \rfloor \eta} \|_{\R^d}^2] 
	\leq 
	\eta^2 E[( M \| Y_{\lfloor t / \eta \rfloor \eta} \|_{\R^d} + \| \nabla_x H(\lfloor t / \eta \rfloor \eta, 0) \|_{\R^d} )^2] 
	+ (2 d \eta) / \beta. 
	\]
\end{lem}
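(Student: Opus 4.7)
The plan is to exploit the explicit piecewise form of the solution $Y$ between the grid points $k\eta$ and $(k+1)\eta$. Indeed, integrating (\ref{LSDE_alongH}) over $[k\eta, t]$ with $k = \lfloor t/\eta \rfloor$ gives
\begin{align*}
Y_t - Y_{k\eta} = -(t - k\eta)\,\nabla_x H(k\eta, Y_{k\eta}) + \sqrt{2/\beta}\,(W_t - W_{k\eta}),
\end{align*}
so there is no stochastic integral to estimate with Burkholder--Davis--Gundy; the one-step increment is simply the sum of a deterministic drift and a Brownian increment.

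Next I would take squared norms and expectations. Since $Y_{k\eta}$ and $\nabla_x H(k\eta, Y_{k\eta})$ are $\F_{k\eta}$-measurable while $W_t - W_{k\eta}$ is independent of $\F_{k\eta}$ with mean zero and $E[\|W_t - W_{k\eta}\|_{\R^d}^2] = d(t-k\eta)$, the cross term vanishes and
\begin{align*}
E[\| Y_t - Y_{k\eta} \|_{\R^d}^2]
= (t - k\eta)^2\, E[\|\nabla_x H(k\eta, Y_{k\eta})\|_{\R^d}^2] + \frac{2 d (t - k\eta)}{\beta}.
\end{align*}

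Now I would use $t - k\eta \leq \eta$ on both terms. For the gradient, the $M$-smoothness of $H(k\eta, \cdot)$ yields
\begin{align*}
\| \nabla_x H(k\eta, Y_{k\eta}) \|_{\R^d}
\leq \| \nabla_x H(k\eta, Y_{k\eta}) - \nabla_x H(k\eta, 0) \|_{\R^d} + \| \nabla_x H(k\eta, 0) \|_{\R^d}
\leq M \| Y_{k\eta} \|_{\R^d} + \| \nabla_x H(k\eta, 0) \|_{\R^d}.
\end{align*}
Squaring, taking expectations, and combining with the Brownian contribution produces exactly the claimed bound. No obstacle is anticipated; this is a direct calculation and the only subtlety is recognizing that the cross term vanishes by the $\F_{k\eta}$-measurability of the drift coefficient against the independent Brownian increment.
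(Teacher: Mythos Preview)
Your proof is correct and follows essentially the same approach as the paper: integrate the SDE explicitly over $[k\eta,t]$, use independence of the Brownian increment from $\F_{k\eta}$ to kill the cross term, bound $(t-k\eta)$ by $\eta$, and apply $M$-smoothness to control $\|\nabla_x H(k\eta,Y_{k\eta})\|_{\R^d}$. If anything, your version is slightly more careful in making the step $(t-k\eta)^2 \leq \eta^2$ explicit.
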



\begin{proof}
	According to (\ref{LSDE_alongH}), we have
	\[
	Y_t - Y_{\lfloor t / \eta \rfloor \eta}
	= - (t - \lfloor t / \eta \rfloor \eta) \nabla_x H(\lfloor t / \eta \rfloor \eta, Y_{\lfloor t / \eta \rfloor \eta}) dt + \sqrt{2 / \beta} (W_t - W_{\lfloor t / \eta \rfloor \eta})
	\]
	Thus, 
	\begin{align*}
		E[\| Y_t - Y_{\lfloor t / \eta \rfloor \eta} \|_{\R^d}^2] 
		&= \eta^2 E[\| \nabla_x H(\lfloor t / \eta \rfloor \eta, Y_{\lfloor t / \eta \rfloor \eta}) \|_{\R^d}^2] 
		+ (2 d \eta) / \beta \\
		&\leq \eta^2 E[( M \| Y_{\lfloor t / \eta \rfloor \eta} \|_{\R^d} + \| \nabla_x H(\lfloor t / \eta \rfloor \eta, 0) \|_{\R^d} )^2] 
		+ (2 d \eta) / \beta 
	\end{align*}
	holds. 
\end{proof}


The following lemma can be proved similarly to Lemma C.5 in \citep{Xu}. 


\begin{lem}
	\label{OneStepBatchBound}
	For each $k \in \N$, we have
	\[
	E[\| \nabla L_n(w) - \nabla L_{n, k}(w) \|_{\R^d}^2] 
	\leq \frac{4 (n - B)}{B (n - 1)} (M \| w \|_{\R^d} + A)^2,\qquad w \in \R^d.
	\]
\end{lem}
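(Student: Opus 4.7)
The plan is to condition on the data $(z_1,\ldots,z_n)$ and treat the vectors $g_i \coloneqq \nabla \ell(w; z_i)$, for $i=1,\ldots,n$, as a fixed finite population. Then $\nabla L_n(w) = \frac{1}{n}\sum_{i=1}^n g_i$ is the population mean and $\nabla L_{n,k}(w) = \frac{1}{B}\sum_{i \in I_k} g_i$ is the sample mean of a simple random sample of size $B$ drawn without replacement from $\{g_1,\ldots,g_n\}$. The core tool is therefore the classical finite-population variance identity
\[
E_{I_k}\!\left[\left\|\frac{1}{B}\sum_{i\in I_k} g_i - \frac{1}{n}\sum_{i=1}^n g_i\right\|_{\R^d}^{\!2}\right]
= \frac{n-B}{B(n-1)} \cdot \frac{1}{n}\sum_{i=1}^n \left\|g_i - \frac{1}{n}\sum_{j=1}^n g_j\right\|_{\R^d}^{\!2},
\]
which I would obtain componentwise by expanding the left-hand side and computing the first and second moments of the indicator variables $\chi_{\{i \in I_k\}}$ against the hypergeometric distribution, using $P(i \in I_k) = B/n$ and $P(i,j \in I_k) = B(B-1)/(n(n-1))$ for $i\neq j$.

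The second ingredient is a uniform bound on $\|g_i\|_{\R^d}$. By $M$-smoothness of $\ell(\cdot; z_i)$ and the hypothesis $\sup_{z}\|\nabla\ell(0;z)\|_{\R^d} \leq A$ from Assumption \ref{Assum_Main2},
\[
\|g_i\|_{\R^d} \leq \|\nabla \ell(w; z_i) - \nabla \ell(0; z_i)\|_{\R^d} + \|\nabla \ell(0; z_i)\|_{\R^d} \leq M\|w\|_{\R^d} + A,
\]
which gives the same bound on $\frac{1}{n}\sum_i g_i$. Applying the triangle inequality $\|g_i - \bar g\|_{\R^d}^2 \leq 2(\|g_i\|_{\R^d}^2 + \|\bar g\|_{\R^d}^2)$ with $\bar g = \frac{1}{n}\sum_j g_j$, the average on the right-hand side of the variance identity is bounded by $4(M\|w\|_{\R^d}+A)^2$, which is where the constant $4$ in the stated bound originates.

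Assembling the two ingredients gives the conditional bound $E_{I_k}[\|\nabla L_n(w)-\nabla L_{n,k}(w)\|_{\R^d}^2 \mid z_1,\ldots,z_n] \leq \frac{4(n-B)}{B(n-1)}(M\|w\|_{\R^d}+A)^2$ deterministically in the data, so taking expectation over $(z_1,\ldots,z_n)$ preserves the inequality. There is no substantive obstacle here; the only point that needs care is the correct derivation of the $\frac{n-B}{n-1}$ finite-population correction factor, which reflects the negative correlation induced by sampling without replacement and is what distinguishes this from the simpler i.i.d. case.
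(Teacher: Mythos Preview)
Your proposal is correct. The paper itself does not give a proof of this lemma but simply states that it ``can be proved similarly to Lemma C.5 in \cite{Xu}''; your argument via the finite-population (sampling-without-replacement) variance identity together with the pointwise bound $\|\nabla\ell(w;z_i)\|_{\R^d}\le M\|w\|_{\R^d}+A$ is exactly the standard route and is presumably what that reference contains.
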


\subsection{Lemmas from \citep{Ebe}}

In this subsection, we use the notations introduced in subsection \ref{SUBSEC_Ebe_Notation}. 
The following lemma is in Section 2 in \citep{Ebe}. 


\begin{lem}
	\label{IneqDeriveFrom_S}
	Let $F, G \in C^1(\R^d; \R)$ be $(m, b)$-dissipative and $M$-smooth. 
	Under the notations of (\ref{DefOfS1}) and (\ref{DefOfS2}), if $(x, y) \notin S_1$, then
	\begin{align}
		\label{OutOfS1}
		\mathcal{L}_F \bar{V}_2(x) + \mathcal{L}_G \bar{V}_2(y)
		< 0
	\end{align}
	holds, where for $H \in C^1(\R^d; \R)$, $\mathcal{L}_H = - \langle \nabla H, \nabla \rangle_{\R^d} - \beta^{-1} \Delta$. 
	Furthermore, if $(x, y) \notin S_2$, then for any $\kappa > 0$, 
	\begin{align}
		\kappa \mathcal{L}_F \bar{V}_2(x) + \kappa \mathcal{L}_G \bar{V}_2(y)
		& \leq - \frac{\lambda}{2} \min \{ 1, 4C \kappa \} \{ 1 + \kappa \bar{V}_2(x) + \kappa \bar{V}_2(y) \} \label{OutOfS2}
	\end{align}
	holds. 
\end{lem}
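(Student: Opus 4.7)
The plan is to apply Lemma~\ref{p-th_Lyap} with $p=2$ to each of $F$ and $G$, and then exploit the geometry of $S_1$ and $S_2$. First I would observe that $\bar{V}_2(x)=1+V_2(x)$ differs from $V_2$ only by a constant, so $\mathcal{L}_H \bar{V}_2(x)=\mathcal{L}_H V_2(x)$ for any $C^1$ function $H$. Invoking Lemma~\ref{p-th_Lyap} with $p=2$ gives $\mathcal{L}_H V_2(x)\le C(2)-\lambda V_2(x)=C(2)+\lambda-\lambda\bar{V}_2(x)=C-\lambda\bar{V}_2(x)$ by the definition (\ref{Def_C_Lambda}) of $C$ and $\lambda$. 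Applying this to $F$ and $G$ and summing gives the pointwise bound
\[
\mathcal{L}_F\bar{V}_2(x)+\mathcal{L}_G\bar{V}_2(y)\le 2C-\lambda(\bar{V}_2(x)+\bar{V}_2(y)).
\]
This inequality is the only analytic ingredient; the rest is arithmetic in the scalar $s:=\bar{V}_2(x)+\bar{V}_2(y)$.

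For (\ref{OutOfS1}), I note that $(x,y)\notin S_1$ means $s>2\lambda^{-1}C$, so the right-hand side above is strictly less than $2C-\lambda\cdot 2\lambda^{-1}C=0$, which is exactly (\ref{OutOfS1}).

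For (\ref{OutOfS2}), multiplying the pointwise bound by $\kappa>0$ reduces the claim to showing
\[
2C\kappa-\lambda\kappa s \le -\tfrac{\lambda}{2}\min\{1,4C\kappa\}(1+\kappa s),\qquad s>4C(1+\lambda^{-1}).
\]
I would handle the two regimes for the $\min$ separately. In the regime $4C\kappa\le 1$ the target becomes $2C(1+\lambda)\le \lambda s(1-2C\kappa)$ after dividing by $\kappa$ and rearranging; since $2C\kappa\le 1/2$ makes $1-2C\kappa\ge 1/2$, while the hypothesis $s>4C(1+\lambda^{-1})$ yields $\lambda s/2>2C(1+\lambda)$, the inequality is immediate. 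In the regime $4C\kappa>1$ the target reduces to $s\ge \tfrac{4C}{\lambda}+\tfrac{1}{\kappa}$; since $1/\kappa<4C$ in this regime, the right-hand side is bounded by $4C(1+1/\lambda)$, which is strictly less than $s$ by hypothesis.

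The only potentially tricky step is keeping the case analysis for $\min\{1,4C\kappa\}$ straight and choosing the right grouping of terms so that the constants $2C$, $\lambda$, and $4C(1+\lambda^{-1})$ match up cleanly with the definition of $S_2$; everything else is routine. No additional assumptions beyond $(m,b)$-dissipativity enter, and $M$-smoothness is not needed for this lemma (it is listed in the hypothesis merely for uniformity with surrounding statements).
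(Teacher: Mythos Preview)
Your proof is correct. The paper does not actually give a proof of this lemma; it only states that ``the following lemma is in Section 2 in \cite{Ebe}''. Your argument is the natural one and is exactly what underlies the paper's own use of the inequality $\mathcal{L}_F\bar{V}_2(x)+\mathcal{L}_G\bar{V}_2(y)\le 2C-\lambda(\bar{V}_2(x)+\bar{V}_2(y))$ in Step~2 of Proposition~\ref{Main_Theorem_GenBound} (which the paper derives there from Lemma~\ref{p-th_Lyap}, just as you do). Your case split on $\min\{1,4C\kappa\}$ and the arithmetic matching the threshold $4C(1+\lambda^{-1})$ for $S_2$ are clean and correct; your remark that $M$-smoothness is unused is also accurate.
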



The following condition (\ref{Ineq_kappa}) is the counterpart of (2.25) in \citep{Ebe}. 


\begin{lem}
	\label{Choose_kappa}
	Let
	\begin{align}
		\label{Condi_On_kappa}
		\kappa 
		\coloneqq \min \left\{ \frac{1}{2}, \frac{2}{C \beta (e^{2 R_1} - 1 - 2 R_1)} \exp \left\{ - \frac{M \beta}{8} R_1^2 \right\} \right\}
		\in (0, 1).
	\end{align}
	Then we have 
	\begin{align}
		\label{Ineq_kappa}
		\frac{1}{2 C \beta \kappa} 
		\geq \int_0^{R_1} \Phi(s) \varphi(s)^{-1} ds.
	\end{align}
\end{lem}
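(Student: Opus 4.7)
The plan is to upper bound the integral $\int_0^{R_1} \Phi(s) \varphi(s)^{-1} ds$ by an explicit expression that depends only on $R_1$, $M$, $\beta$ (i.e., independent of $\kappa$), and then observe that the definition (\ref{Condi_On_kappa}) of $\kappa$ was engineered precisely so that this bound is at most $1/(2C\beta\kappa)$.

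First I would dispense with the $e^{-M\beta s^2/8}$ factor by the trivial estimates $\varphi(s) \leq e^{-2Q(\kappa) s}$ (since $e^{-M\beta s^2/8}\leq 1$ for $s \geq 0$) and $\varphi(s)^{-1} \leq e^{M \beta R_1^2/8} e^{2 Q(\kappa) s}$ for $s \in [0, R_1]$. Integrating the first estimate gives $\Phi(s) \leq (1 - e^{-2Q(\kappa) s})/(2Q(\kappa))$, so that for $s \in [0, R_1]$
\begin{equation*}
\Phi(s)\varphi(s)^{-1} \leq \frac{e^{M\beta R_1^2/8}}{2Q(\kappa)} \bigl(e^{2Q(\kappa) s} - 1\bigr).
\end{equation*}
Integrating once more over $[0, R_1]$ produces the clean bound
\begin{equation*}
\int_0^{R_1} \Phi(s)\varphi(s)^{-1}\,ds \leq \frac{e^{M\beta R_1^2/8}}{4 Q(\kappa)^2}\bigl(e^{2 Q(\kappa) R_1} - 1 - 2 Q(\kappa) R_1\bigr).
\end{equation*}

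The next step is to eliminate the remaining dependence on $\kappa$ through $Q(\kappa)$. Since $Q(\kappa) = 2\sqrt{\kappa-\kappa^2} \in (0, 1]$ (the upper bound $1$ being attained at $\kappa = 1/2$, which is admissible here), and since the power-series expansion $(e^{ax} - 1 - ax)/a^2 = \sum_{k\geq 2} a^{k-2} x^k/k!$ is nondecreasing in $a \geq 0$ for fixed $x \geq 0$, I can replace $2Q(\kappa)$ by $2$ in the numerator and denominator simultaneously to obtain
\begin{equation*}
\int_0^{R_1} \Phi(s)\varphi(s)^{-1}\,ds \leq \frac{e^{M\beta R_1^2/8}}{4}\bigl(e^{2 R_1} - 1 - 2 R_1\bigr).
\end{equation*}

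Finally, the definition (\ref{Condi_On_kappa}) of $\kappa$ implies $\kappa \leq \frac{2}{C\beta(e^{2R_1}-1-2R_1)}e^{-M\beta R_1^2/8}$, equivalently $\frac{1}{2C\beta\kappa} \geq \frac{e^{M\beta R_1^2/8}(e^{2R_1}-1-2R_1)}{4}$, which together with the previous display gives (\ref{Ineq_kappa}). The only nontrivial step is the monotonicity-in-$a$ argument for $(e^{ax}-1-ax)/a^2$, but even that is straightforward from the series expansion; I do not anticipate a real obstacle in this proof, since the constant $\kappa$ in (\ref{Condi_On_kappa}) is essentially defined by solving the target inequality backwards.
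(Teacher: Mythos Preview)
Your proof is correct and follows essentially the same route as the paper: bound the Gaussian factor by $e^{M\beta R_1^2/8}$ and use $Q(\kappa)\le 1$ to reduce to the elementary integral yielding $\tfrac14 e^{M\beta R_1^2/8}(e^{2R_1}-1-2R_1)$. The only cosmetic difference is that the paper applies $Q(\kappa)\le 1$ immediately inside the double integral $\int_0^{R_1}\!\int_0^s \exp\{\tfrac{M\beta}{8}(s^2-r^2)+2Q(\kappa)(s-r)\}\,dr\,ds$ (valid since $s-r\ge 0$), whereas you carry $Q(\kappa)$ through and invoke the monotonicity of $a\mapsto (e^{ax}-1-ax)/a^2$ at the end.
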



\begin{proof}
	Since $Q(\kappa) \in (0, 1]$, 
	\begin{align*}
		\int_0^{R_1} \Phi(s) \varphi(s)^{-1} ds
		&= \int_0^{R_1} \int_0^s \exp \left\{ \frac{M \beta}{8} (s^2 - r^2) + 2 Q(\kappa) (s - r)  \right\} dr ds \\
		&\leq \int_0^{R_1} \int_0^s \exp \left\{ \frac{M \beta}{8} (s^2 - r^2) + 2 (s - r) \right\} dr ds \\
		&\leq \exp \left\{ \frac{M \beta}{8} R_1^2 \right\} \int_0^{R_1} \int_0^s e^{ 2 (s - r) } dr ds \\
		&= \exp \left\{ \frac{M \beta}{8} R_1^2 \right\} \int_0^{R_1} \int_0^s e^{ 2 (s - r) } dr ds \\
		&= \frac{1}{2} \left\{ \frac{e^{2R_1}}{2} - \frac{1}{2} - R_1 \right\} \exp \left\{ \frac{M \beta}{8} R_1^2 \right\}
	\end{align*}
	holds. 
\end{proof}


The following three lemmas are in Section 5 in \citep{Ebe}. 


\begin{lem}
	\label{PropertyOfF}
	The function $f$ defined by (\ref{Def_Of_F}) is nonnegative and bounded on $[0, \infty)$ and satisfies $f(0) = 0$. 
	Furthermore, $f$ is continuous, increasing and concave on $\R$, and
	\[
	r \varphi(R_2) 
	\leq \Phi(r) 
	\leq 2 f(r) 
	\leq 2 \Phi(r) 
	\leq 2 r,\qquad 0 \leq r \leq R_2
	\]
	holds. 
\end{lem}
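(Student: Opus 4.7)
The plan is to read off every property from the shape of $\varphi$ and $g$. First I would note that $R_1 \leq R_2$ (since $S_1 \subseteq S_2$ by comparing $2\lambda^{-1}C$ with $4C(1+\lambda^{-1})$), that $\varphi$ is smooth, positive, strictly decreasing on $[0,\infty)$ with $\varphi(0) = 1$, and that the integrands $\Phi(s)\varphi(s)^{-1}$ in the definition of $g$ are positive; hence $g$ is smooth and non-increasing on $[0,\infty)$. Then, directly from (\ref{DefOfBetaAndXi}) and the fact that the second integral in (\ref{DefOfG}) is already saturated once $r \geq R_1$, one checks $g(0) = 1$ and $g(R_2) = 1 - \zeta\zeta^{-1}/4 - \xi\xi^{-1}/4 = 1/2$, so $g(s) \in [1/2, 1]$ for all $s \in [0, R_2]$.

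Next I would establish the first four assertions. Nonnegativity and $f(0) = 0$ on $[0, \infty)$ are immediate from the integral representation because $\varphi g \geq 0$; boundedness is clear since $f(r) \leq \Phi(R_2)$ for all $r \geq 0$. Continuity on each of $(-\infty, 0)$, $[0, R_2]$, and $[R_2, \infty)$ is clear, and the two matching conditions ($f(0^-) = 0 = f(0^+)$ and $f$ constant after $R_2$) give continuity on $\R$.

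For monotonicity and concavity, I would split into the three regions. On $(-\infty, 0)$, $f'(r) = 1$; on $(0, R_2)$, $f'(r) = \varphi(r) g(r) \in (0, 1]$ (since $\varphi \leq 1$ and $g \leq 1$), with $f'(0^+) = \varphi(0) g(0) = 1$ matching the left derivative at $0$; on $(R_2, \infty)$, $f'(r) = 0$. So $f$ is nondecreasing everywhere. For concavity it suffices to show $f'$ is non-increasing: on $(0, R_2)$,
\begin{align*}
(\varphi g)'(r)
= \varphi'(r) g(r) + \varphi(r) g'(r)
\leq 0,
\end{align*}
since $\varphi' < 0$, $g \geq 0$, $\varphi > 0$ and $g' \leq 0$. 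The matching at $r = 0$ is already $C^1$, and at $r = R_2$ the slope drops from $\varphi(R_2) g(R_2) = \varphi(R_2)/2 > 0$ down to $0$, so the jump in $f'$ is non-positive, which is compatible with concavity.

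Finally, the inequality chain on $[0, R_2]$ is just a direct consequence of the pointwise bounds $\varphi(R_2) \leq \varphi(s) \leq \varphi(0) = 1$ and $1/2 \leq g(s) \leq 1$: integrating gives $r\varphi(R_2) \leq \Phi(r) \leq r$ and $\tfrac12 \Phi(r) \leq f(r) \leq \Phi(r)$, which combine to yield the stated chain. I do not expect a real obstacle here; the only mildly delicate point is confirming $g(s) \in [1/2, 1]$ on $[0, R_2]$, which requires noticing that $R_1 \leq R_2$ so that the second integral in (\ref{DefOfG}) indeed reaches its full value $1/\xi$ at $R_2$.
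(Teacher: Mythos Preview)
Your argument is correct. The paper does not supply its own proof of this lemma; it simply cites Section~5 of \cite{Ebe}, so there is nothing to compare at the level of technique. Your direct verification from the definitions of $\varphi$, $\Phi$, and $g$ is exactly the standard argument and matches what one finds in Eberle's original.

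One small slip: you write that $g$ is ``smooth'' on $[0,\infty)$, but because of the $\min\{r,R_1\}$ and $\min\{r,R_2\}$ cutoffs in (\ref{DefOfG}), $g'$ has a (nonnegative) jump at $R_1$ and at $R_2$, so $g$ is only $C^0$, not $C^1$. This does not affect your proof, since you only use that $g$ is continuous and that $g'\leq 0$ wherever it exists; both remain true, and since $f'=\varphi g$ is continuous on $(0,R_2)$, concavity still follows from $(\varphi g)'\leq 0$ on $(0,R_1)\cup(R_1,R_2)$ together with the nonpositive jumps of $f'$ at $0$ and $R_2$.
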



\begin{lem}
	\label{PropertyOfMu_f}
	$\mu_f((-\infty, 0] \cup (R_2, \infty)) = 0$ and $\mu_f(\{ R_i \}) \leq 0$ hold. 
\end{lem}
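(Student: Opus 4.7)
The plan is to analyze the second derivative measure $\mu_f$ by examining the piecewise structure of $f$ given in (\ref{Def_Of_F}). Explicitly, $f(r) = r$ for $r < 0$; $f(r) = \int_0^r \varphi(s)g(s)\,ds$ for $0 \le r \le R_2$; and $f(r)$ equals the constant $\int_0^{R_2} \varphi(s)g(s)\,ds$ for $r > R_2$. On each of the three open pieces, $f'$ is absolutely continuous, so $\mu_f$ restricted to each open piece is absolutely continuous with respect to Lebesgue measure, with density $f''$.

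First I would deal with the open intervals $(-\infty, 0)$ and $(R_2, \infty)$. Since $f' \equiv 1$ on the former and $f' \equiv 0$ on the latter, $f''$ vanishes there, hence $\mu_f$ restricted to these intervals is zero. Next I would check continuity of $f'$ at the endpoint $r = 0$: from the left $f'(0^-) = 1$, while from the right $f'(0^+) = \varphi(0) g(0)$. Since $\varphi(0) = 1$ by (\ref{DefOfPhi}) and $g(0) = 1$ by (\ref{DefOfG}), the derivative is continuous at $0$ and therefore $\mu_f(\{0\}) = 0$. Combining the two steps gives $\mu_f((-\infty, 0] \cup (R_2, \infty)) = 0$.

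For the atoms at $R_1$ and $R_2$, the jump of $f'$ equals the atom of $\mu_f$. On the interval $[0, R_2]$, both $\varphi$ and $g$ are continuous (the kink of $g$ at $R_1$ is in its derivative, not in $g$ itself), so $f'(r) = \varphi(r) g(r)$ is continuous throughout $[0, R_2]$; in particular $f'$ is continuous at $R_1$, giving $\mu_f(\{R_1\}) = 0$. At $R_2$, the jump is $f'(R_2^+) - f'(R_2^-) = -\varphi(R_2) g(R_2)$, and using (\ref{DefOfBetaAndXi}) one computes
\[
g(R_2) = 1 - \frac{\zeta}{4} \cdot \frac{1}{\zeta} - \frac{\xi}{4} \cdot \frac{1}{\xi} = \frac{1}{2} > 0,
\]
so $\mu_f(\{R_2\}) = -\varphi(R_2)/2 < 0$.

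There is no real obstacle here: the argument reduces to locating where $f'$ is constant, continuous, or has a jump, and then invoking the two short computations $\varphi(0) g(0) = 1$ and $g(R_2) = 1/2$ from the definitions. The only point that requires some care is recognizing that the kink of $g$ at $R_1$ does not produce an atom in $\mu_f$ because it is a kink of $g$ itself (i.e.\ of $f''$), not of $f'$.
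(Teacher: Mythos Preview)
Your proof is correct. The paper does not supply its own argument for this lemma but simply refers to Section~5 of \cite{Ebe}; your direct piecewise analysis of $f'$ is exactly the verification one finds there, and the two computations $\varphi(0)g(0)=1$ and $g(R_2)=1/2$ are the only substantive points.
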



\begin{lem}
	\label{Esti_On_f}
	For any $ r \in (0, R_1) \cup (R_1, R_2)$, we have
	\begin{align*}
		f^{\prime \prime}(r)
		&\leq  -\left( \frac{M \beta}{4} r + 2 Q(\kappa) \right) f^\prime(r) - \frac{\zeta}{4} f(r) \chi_{(0, R_2)}(r) - \frac{\xi}{4} f(r) \chi_{(0, R_1)}(r).
	\end{align*}
\end{lem}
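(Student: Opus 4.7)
The plan is to prove the inequality by direct differentiation of $f$ on the open interval $(0, R_1) \cup (R_1, R_2)$, where the piecewise-defined function is smooth, and then apply the estimate from Lemma \ref{PropertyOfF} to convert an intermediate bound involving $\Phi$ into one involving $f$.

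First I would note that for $r \in (0, R_2)$, the definition (\ref{Def_Of_F}) gives $f^\prime(r) = \varphi(r) g(r)$, and differentiating again yields
\begin{align*}
f^{\prime\prime}(r) = \varphi^\prime(r) g(r) + \varphi(r) g^\prime(r).
\end{align*}
From (\ref{DefOfPhi}) I would read off $\varphi^\prime(r) = -\bigl(\tfrac{M\beta}{4} r + 2 Q(\kappa)\bigr) \varphi(r)$, and from (\ref{DefOfG}) that for $r \in (0, R_1) \cup (R_1, R_2)$,
\begin{align*}
g^\prime(r) = -\frac{\zeta}{4} \Phi(r)\varphi(r)^{-1} \chi_{(0, R_2)}(r) - \frac{\xi}{4} \Phi(r)\varphi(r)^{-1} \chi_{(0, R_1)}(r),
\end{align*}
where the exclusion of the point $R_1$ accounts for the fact that the second indicator has a jump there. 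Substituting these in and using $f^\prime(r) = \varphi(r) g(r)$ would give the identity
\begin{align*}
f^{\prime\prime}(r) = -\Bigl(\frac{M\beta}{4} r + 2 Q(\kappa)\Bigr) f^\prime(r) - \frac{\zeta}{4} \Phi(r) \chi_{(0, R_2)}(r) - \frac{\xi}{4} \Phi(r) \chi_{(0, R_1)}(r).
\end{align*}

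Finally, I would invoke the chain of inequalities $f(r) \leq \Phi(r) \leq 2 f(r)$ on $[0, R_2]$ from Lemma \ref{PropertyOfF}; in particular $\Phi(r) \geq f(r)$, so the negative terms $-\frac{\zeta}{4} \Phi(r)$ and $-\frac{\xi}{4} \Phi(r)$ are bounded above by $-\frac{\zeta}{4} f(r)$ and $-\frac{\xi}{4} f(r)$ respectively, giving the claimed inequality.

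There is no real obstacle here: the proof is a pure differentiation calculation, and the only non-routine ingredient is the comparison $\Phi \geq f$, which is already part of Lemma \ref{PropertyOfF}. The restriction $r \in (0, R_1) \cup (R_1, R_2)$ in the statement is precisely what is needed so that both $g$ and $\varphi g$ are classically differentiable at $r$; the inequality is in fact an equality modulo the one-line comparison step.
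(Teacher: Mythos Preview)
Your proposal is correct and is exactly the standard argument: the paper does not supply its own proof of this lemma but cites \cite{Ebe}, where the computation is precisely the one you carry out---differentiate $f'=\varphi g$, use $\varphi'=-(\tfrac{M\beta}{4}r+2Q(\kappa))\varphi$ and the piecewise derivative of $g$, and finish with the comparison $f\le\Phi$ from Lemma~\ref{PropertyOfF}.
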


\subsection{Inequalities based on couplings}

\begin{lem}
	\label{BeforeWasserBound}
	(Lemma 6 in \citep{Ragi})
	Let $H \in C^1(\R^d; \R)$ satisfy $\| \nabla H(x) \|_{\R^d} \leq c_1 \| x \|_{\R^d} + c_2$. 
	Then for any probability measures $\mu$ and $\nu$ on $\R^d$ and any $\gamma \in \Pi(\mu, \nu)$, we have
	\[
	\left| \int_{\R^d} H(x) \mu(dx) - \int_{\R^d} H(y) \nu(dy) \right| 
	\leq \int_{\R^d \times \R^d} \left( \frac{c_1}{2} \| x \|_{\R^d} + \frac{c_1}{2} \| y \|_{\R^d} + c_2 \right) \| x - y \|_{\R^d} \gamma (dx dy).
	\] 
\end{lem}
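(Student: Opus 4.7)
The plan is to reduce everything to a pointwise bound on $|H(x) - H(y)|$ and then integrate against the coupling. First I would write the difference of integrals as a single integral against $\gamma$:
\[
\int_{\R^d} H(x)\, \mu(dx) - \int_{\R^d} H(y)\, \nu(dy)
= \int_{\R^d \times \R^d} \bigl(H(x) - H(y)\bigr)\, \gamma(dx\, dy),
\]
which is valid because the marginals of $\gamma$ are $\mu$ and $\nu$.

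Next, to bound $|H(x) - H(y)|$ pointwise, I would apply the fundamental theorem of calculus along the line segment from $y$ to $x$, writing
\[
H(x) - H(y) = \int_0^1 \langle \nabla H(t x + (1-t) y), x - y \rangle_{\R^d}\, dt.
\]
Taking absolute values and using Cauchy--Schwarz together with the hypothesis $\|\nabla H(z)\|_{\R^d} \leq c_1 \|z\|_{\R^d} + c_2$ yields
\[
|H(x) - H(y)| \leq \|x - y\|_{\R^d} \int_0^1 \bigl(c_1 \|t x + (1-t) y\|_{\R^d} + c_2\bigr) dt.
\]
The triangle inequality gives $\|t x + (1-t) y\|_{\R^d} \leq t \|x\|_{\R^d} + (1-t) \|y\|_{\R^d}$, and the elementary identity $\int_0^1 t\, dt = \int_0^1 (1-t)\, dt = 1/2$ then produces exactly the integrand appearing in the statement.

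Finally, I would integrate this pointwise bound against $\gamma$ and take absolute values through the outer integral (using Jensen's inequality / triangle inequality for integrals) to conclude. The argument is essentially a routine computation, so the only minor subtlety is ensuring integrability so that Fubini applies to exchange the $dt$ integration with the $\gamma$ integration; this is automatic if the right-hand side of the claimed inequality is finite, and if it is infinite the inequality is trivial, so there is no genuine obstacle.
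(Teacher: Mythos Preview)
Your proof is correct and is the standard argument for this inequality. The paper does not actually provide its own proof of this lemma; it simply cites it as Lemma~6 in \cite{Ragi}, so there is nothing to compare against beyond noting that your approach is exactly the natural one.
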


\begin{lem}
	\label{ChangeToRho}
	Let $A > 0$. 
	For $\rho_2$ defined by (\ref{Rho2}), 
	\begin{align*}
		\left( \frac{M}{2} \| x \|_{\R^d} + \frac{M}{2} \| y \|_{\R^d} + A \right) \| x - y \|_{\R^d} 
		\leq 2 \exp \left( \frac{M \beta R_2^2}{8} + 2 R_2 \right) \max \left\{ 1, \frac{1}{R_2} \right\} \max \left\{ A + \frac{M}{2}, \frac{1}{\kappa} \left( \frac{A}{2} + M \right) \right\} \rho_2(x, y)
	\end{align*}
	holds. 
\end{lem}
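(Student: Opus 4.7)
The plan is to bound the left-hand side by splitting into the two cases $\|x - y\|_{\R^d} \leq R_2$ and $\|x - y\|_{\R^d} \geq R_2$. The qualitative behavior of $f$ changes at $R_2$ (it grows roughly linearly before and saturates after), so the right way to absorb the factor $\|x - y\|_{\R^d}$ on the left-hand side into $\rho_2(x,y) = f(\|x - y\|_{\R^d}) U(x, y)$ is different in each regime. By Lemma \ref{PropertyOfF}, $f(r) \geq r \varphi(R_2)/2$ for $0 \leq r \leq R_2$, and since $f$ is constant past $R_2$, also $f(r) \geq R_2 \varphi(R_2)/2$ for $r \geq R_2$. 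Together with $Q(\kappa) \in (0, 1]$ from (\ref{DefOfQ}), this gives $\varphi(R_2)^{-1} \leq \exp(M\beta R_2^2 / 8 + 2 R_2)$, which is the source of the exponential prefactor in the claim.

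In the near case $\|x - y\|_{\R^d} \leq R_2$, I would write $\|x - y\|_{\R^d} \leq 2 f(\|x - y\|_{\R^d})/\varphi(R_2)$ and bound the remaining factor using the AM-GM inequality $\|z\|_{\R^d} \leq \bar{V}_2(z)/2$:
\begin{equation*}
\tfrac{M}{2}\|x\|_{\R^d} + \tfrac{M}{2}\|y\|_{\R^d} + A \leq \tfrac{M}{4}(\bar{V}_2(x) + \bar{V}_2(y)) + A \leq \max\{A, M/(4\kappa)\}\, U(x, y),
\end{equation*}
the last step being termwise comparison with $U(x,y) = 1 + \kappa(\bar{V}_2(x) + \bar{V}_2(y))$. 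In the far case $\|x - y\|_{\R^d} \geq R_2$, I would use $1 \leq 2 f(\|x - y\|_{\R^d})/(R_2 \varphi(R_2))$ and attack the whole product at once, via $\|x - y\|_{\R^d} \leq \|x\|_{\R^d} + \|y\|_{\R^d}$, $(a+b)^2 \leq 2(a^2 + b^2)$, and $\|z\|_{\R^d} \leq \bar{V}_2(z)/2$:
\begin{equation*}
\bigl(\tfrac{M}{2}(\|x\|_{\R^d} + \|y\|_{\R^d}) + A\bigr)\|x - y\|_{\R^d} \leq M(\|x\|_{\R^d}^2 + \|y\|_{\R^d}^2) + \tfrac{A}{2}(\bar{V}_2(x) + \bar{V}_2(y)) \leq \tfrac{M + A/2}{\kappa}\, U(x, y).
\end{equation*}

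Combining the two cases produces an overall constant of the form $\tfrac{2}{\varphi(R_2)} \max\{1, 1/R_2\}\, \max\{\max(A, M/(4\kappa)),\, (M + A/2)/\kappa\}$, where the case-1 contribution $\max(A, M/(4\kappa))$ and the case-2 contribution $(M + A/2)/\kappa$ are both dominated by $\max\{A + M/2,\, (A/2 + M)/\kappa\}$ (using $M/(4\kappa) \leq M/\kappa \leq (A/2 + M)/\kappa$ and $A \leq A + M/2$). Substituting the exponential bound on $\varphi(R_2)^{-1}$ then completes the proof. The argument is entirely constant-chasing with no real conceptual obstacle; the only care needed is choosing the right AM-GM splitting in each regime so that the two case-constants merge into the single prefactor claimed.
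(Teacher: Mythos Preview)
Your proposal is correct and follows essentially the same approach as the paper: both split into the cases $\|x-y\|_{\R^d}\le R_2$ and $\|x-y\|_{\R^d}>R_2$, use the lower bound $f(r)\ge r\varphi(R_2)/2$ (respectively $f(r)\ge R_2\varphi(R_2)/2$), apply the elementary inequality $\|z\|_{\R^d}\le \bar V_2(z)/2$ to compare with $U(x,y)$, and finally bound $\varphi(R_2)^{-1}$ using $Q(\kappa)\le 1$. The only cosmetic difference is that the paper rederives the lower bound on $f$ from $f'(r)=\varphi(r)g(r)\ge \tfrac12\varphi(R_2)$ rather than citing Lemma~\ref{PropertyOfF}, and your case-1 constant $\max\{A,\,M/(4\kappa)\}$ is slightly sharper than the paper's $\max\{A+M/2,\,M/(4\kappa)\}$, but both are absorbed into the stated final bound.
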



\begin{proof}
	For $r \in [0, R_2)$, we have 
	\[
	f^\prime(r) 
	= \varphi(r) g(r) 
	\geq \frac{1}{2} \exp \left( -\frac{M \beta R_2^2}{8} - 2 R_2 \right).
	\]
	Thus, since $f(0) = 0$, if $\| x - y \|_{\R^d} \leq R_2$, then Taylor's theorem yields
	\begin{align*}
		f(\| x - y \|_{\R^d}) 
		\geq \frac{1}{2} \exp \left( -\frac{M \beta R_2^2}{8} - 2 R_2 \right) \| x - y \|_{\R^d}.
	\end{align*}
	Therefore, if $\| x - y \|_{\R^d} \leq R_2$, then by $r \leq 2^{-1}( 1 + r^2 )$, we obtain 
	\begin{align*}
		\left( \frac{M}{2} \| x \|_{\R^d} + \frac{M}{2} \| y \|_{\R^d} + A \right) \| x - y \|_{\R^d}  
		&\leq 2 \exp \left( \frac{M \beta R_2^2}{8} + 2 R_2 \right) \left( A + \frac{M}{2} + \frac{M}{4} \| x \|_{\R^d}^2 + \frac{M}{4} \| y \|_{\R^d}^2 \right) f(\| x - y \|_{\R^d}) \\
		&\leq 2 \exp \left( \frac{M \beta R_2^2}{8} + 2 R_2 \right) \max \left\{ A + \frac{M}{2}, \frac{M}{4 \kappa} \right\} \rho_2(x, y).
	\end{align*}
	On the other hand, if $\| x - y \|_{\R^d} > R_2$, then we have
	\[
	f( \| x - y \|_{\R^d}) 
	= f(R_2) 
	\geq \frac{R_2}{2} \exp \left( -\frac{M \beta R_2^2}{8} - 2 R_2 \right)
	\]
	and therefore
	\begin{align*}
		\left( \frac{M}{2} \| x \|_{\R^d} + \frac{M}{2} \| y \|_{\R^d} + A \right) \| x - y \|_{\R^d} 
		&\leq A + \left( \frac{A}{2} + M \right) (\| x \|_{\R^d}^2 + \| y \|_{\R^d}^2) \\
		&\leq \frac{2}{R_2} \exp \left( \frac{M \beta R_2^2}{8} + 2 R_2 \right) \max \left\{ A, \frac{1}{\kappa} \left( \frac{A}{2} + M \right) \right\} \rho_2(x, y) 
	\end{align*}
	holds. 
\end{proof}


The following lemma is a simple corollary to Lemmas \ref{BeforeWasserBound} and \ref{ChangeToRho}. 


\begin{lem}
	\label{Bound_By_Rho}
	Let $H \in C^1(\R^d; \R)$ be $M$-smooth and let $\mu$ and $\nu$ be probability measures on $\R^d$.
	Then we have
	\begin{align}
		| \mu(H) - \nu(H) | 
		\leq O_{m, b, M, \beta, \| \nabla H(0) \|_{\R^d}, d} (\mathcal{W}_{\rho_2}(\mu, \nu) ).
	\end{align}
\end{lem}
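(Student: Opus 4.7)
The plan is to chain the two preceding lemmas and then minimize over couplings. First, since $H$ is $M$-smooth, the gradient satisfies the linear growth bound $\| \nabla H(x) \|_{\R^d} \leq M \| x \|_{\R^d} + \| \nabla H(0) \|_{\R^d}$ for every $x \in \R^d$, so Lemma \ref{BeforeWasserBound} applies with $c_1 = M$ and $c_2 = \| \nabla H(0) \|_{\R^d}$. That yields, for every $\gamma \in \Pi(\mu, \nu)$,
\[
    | \mu(H) - \nu(H) |
    \leq \int_{\R^d \times \R^d} \left( \frac{M}{2} \| x \|_{\R^d} + \frac{M}{2} \| y \|_{\R^d} + \| \nabla H(0) \|_{\R^d} \right) \| x - y \|_{\R^d} \, \gamma(dx\, dy).
\]

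Next, I would apply Lemma \ref{ChangeToRho} pointwise to the integrand, taking $A = \| \nabla H(0) \|_{\R^d}$. This replaces $\bigl( \tfrac{M}{2}\|x\|_{\R^d} + \tfrac{M}{2}\|y\|_{\R^d} + A\bigr) \|x - y\|_{\R^d}$ by a constant multiple of $\rho_2(x,y)$, where the constant depends only on $M$, $\beta$, $\kappa$, $R_2$ and $A = \| \nabla H(0) \|_{\R^d}$. Integrating against $\gamma$ and then taking the infimum over $\gamma \in \Pi(\mu, \nu)$ converts the right-hand side into a constant times $\mathcal{W}_{\rho_2}(\mu, \nu)$.

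Finally, I would trace the dependence of the constants. The geometric quantities $R_2$ (the diameter of $S_2$) and $\kappa$ (defined in \eqref{Condi_On_kappa}) are determined by $C = C(2) + \lambda(2)$, $\lambda = \lambda(2)$ and $M, \beta, R_1$; by Lemma \ref{p-th_Lyap}, $C(2)$ and $\lambda(2)$ depend only on $m, b, \beta, d$. Hence $R_2$ and $\kappa$ depend on $m, b, M, \beta, d$ and the overall prefactor depends on $m, b, M, \beta, \| \nabla H(0) \|_{\R^d}, d$, which matches the claimed $O_{m,b,M,\beta,\|\nabla H(0)\|_{\R^d}, d}$ notation. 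There is no real obstacle here: both ingredients are already in place, and the only bookkeeping step is verifying that none of the hidden constants absorb a dependence on $\mu$ or $\nu$, which is clear since Lemma \ref{ChangeToRho} is a purely pointwise inequality.
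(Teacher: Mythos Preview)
Your proposal is correct and is exactly the approach the paper takes: the paper states this lemma as ``a simple corollary to Lemmas \ref{BeforeWasserBound} and \ref{ChangeToRho}'' without further detail, and your chaining of these two results followed by taking the infimum over couplings is precisely the intended argument. Your tracking of the constant dependencies through $R_2$, $\kappa$, $C(2)$, $\lambda(2)$ is also accurate.
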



\bibliographystyle{plain}
\bibliography{article.bib}

\clearpage
	
\end{document}